\theoremstyle{plain}
\newtheorem{thm}{Theorem}[section]
\newtheorem{lem}[thm]{Lemma}
\newtheorem{cor}[thm]{Corollary}
\newtheorem{prop}[thm]{Proposition}
\newtheorem{conj}[thm]{Conjecture}
\theoremstyle{definition}
\newtheorem{defi}[thm]{Definition}
\theoremstyle{remark}
\newtheorem{remark}[thm]{Remark}
\newcommand{\lemref}[1]{\hyperref[#1]{Lemma \ref*{#1}}}
\newcommand{\thmref}[1]{\hyperref[#1]{Theorem \ref*{#1}}}
\newcommand{\propref}[1]{\hyperref[#1]{Proposition \ref*{#1}}}
\newcommand{\corref}[1]{\hyperref[#1]{Corollary \ref*{#1}}}
\newcommand{\defref}[1]{\hyperref[#1]{Definition \ref*{#1}}}
\newcommand{\remref}[1]{\hyperref[#1]{Remark \ref*{#1}}}
\newcommand{\conjref}[1]{\hyperref[#1]{Conjecture \ref*{#1}}}
\newcommand{\Gal}{\mathrm{Gal}}
\newcommand{\Spec}{\operatorname{Spec}}
\def \rank {{\mathbf r}}
\def \cond {{\mathbf c}}
\def \F {\mathbb{F}}
\def \C {\mathbb{C}} %complex numbers
\def \Z {\mathbb{Z}}
\newcommand*{\defeq}{\mathrel{\rlap{%
                     \raisebox{0.27ex}{$\m@th\cdot$}}%
                     \raisebox{-0.27ex}{$\m@th\cdot$}}%
                     =}
\numberwithin{equation}{section}
\def\@setcopyright{}
\def\serieslogo@{}
\title[Trace functions over Function Fields]{Short sums of trace functions over function fields \\ and their applications}
\author{Will Sawin}\thanks{W.S. served as a Clay Research Fellow while working on this paper.}
\address{Princeton University, Fine Hall, 304 Washington Rd, Princeton NJ 08540, USA}
\email{wsawin@princeton.edu}
\author{Mark Shusterman}\thanks{M.S. is The Dr. A. Edward Friedmann Career Development Chair in Mathematics}
\address{Faculty of Mathematics and Computer Science, Weizmann
Institute of Science, 234 Herzl Street, Rehovot 76100, Israel.}
\email{mark.shusterman@weizmann.ac.il}
\begin{document}

\begin{abstract}

For large enough (but fixed) prime powers $q$, and trace functions to squarefree moduli in $\F_q[u]$ with slopes at most $1$ at infinity, and no Artin--Schreier factors in their geometric global monodromy, we come close to square-root cancellation in short sums.
A special case is a function field version of Hooley's Hypothesis $R^*$ for short Kloosterman sums. 
As a result, we are able to make progress on several problems in analytic number theory over $\F_q[u]$ such as Mordell's problem on the least residue class not represented by a polynomial and the variance of short Kloosterman sums.
 
\end{abstract}

\maketitle

\tableofcontents

\section{Introduction}

\subsection{Prior work over $\mathbb{Z}$}

\subsubsection{Hooley's conjectures on short Kloosterman sums, its variants, and applications}

In the study \cite{Hooley72} of improvements of the Brun--Titchmarsh theorem, Hypothesis $R$ on cancellation in incomplete Kloosterman sums was introduced, stated roughly as follows.
\begin{conj} \label{HypothesisR}

Fix $\epsilon > 0$. For a positive integer $n$, an integer $b$ coprime to $n$, and a pair of positive integers $\nu_1< \nu_2$ we have
\begin{equation} \label{HooleySumEq}
\sum_{\substack{\nu_1 < a \leq \nu_2 \\ \gcd(a,n)=1}} e \left( \frac{b \overline{a}}{n} \right) \ll_\epsilon (\nu_2 - \nu_1)^{\frac{1}{2}} n^\epsilon
\end{equation}
where $\overline{a}$ stands for the multiplicative inverse of $a$ modulo $n$, and $e(x) = e^{2 \pi i x}$.

\end{conj}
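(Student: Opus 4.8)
The plan splits according to the length $L \defeq \nu_2 - \nu_1$ of the interval: near square‑root cancellation is known only in the ``long'' range, and the realistic proposal is to prove \conjref{HypothesisR} there by a completion argument, and then to isolate precisely why the ``short'' range $L \le n^{1/2}$ remains open.

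\textbf{Step 1: completion.} Expand the indicator of $(\nu_1,\nu_2]$ into additive characters modulo $n$, $\mathbf 1_{(\nu_1,\nu_2]}(a) = \sum_{h \bmod n} c_h\, e(ha/n)$, with $|c_h| \ll \min\!\bigl(L/n,\ 1/(n\lVert h/n\rVert)\bigr)$ where $\lVert x\rVert$ is the distance from $x$ to $\Z$. Substituting into \eqref{HooleySumEq} and interchanging summation rewrites the incomplete sum as $\sum_{h \bmod n} c_h \sum_{\gcd(a,n)=1} e\!\bigl((b\bar a + ha)/n\bigr)$; the inner sums are Kloosterman sums $\mathrm{Kl}(bh;n)$ when $\gcd(h,n)=1$, and Ramanujan‑type sums of size $O(\tau(n))$ for the degenerate frequencies.

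\textbf{Step 2: the Weil bound.} Invoke the Riemann Hypothesis for curves over finite fields — Weil's bound for prime $n$, and its extension to composite $n$ via the Chinese Remainder Theorem together with Deligne's theory of twisted sheaves — in the form $|\mathrm{Kl}(c;n)| \le \tau(n)\, n^{1/2}$. Inserting this and summing $|c_h|$ over $h$, using $\sum_{0<|h|<n/2}\lVert h/n\rVert^{-1} \ll n\log n$, yields the unconditional estimate $\ll_\epsilon n^{1/2+\epsilon}$, uniformly in $L$. Since $L \le n$, this already proves \eqref{HooleySumEq} whenever $L \ge n^{1/2+\epsilon}$; for smaller $L$ it is consistent with, but strictly weaker than, the conjectured $L^{1/2}n^\epsilon$.

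\textbf{Step 3 and the main obstacle: genuinely short sums.} For $L \le n^{1/2}$ the completion method is powerless — it produces $\asymp n$ complete Kloosterman sums, each already of the optimal size $\asymp n^{1/2}$, with no available cancellation among the $n$ terms. The only known substitute is to make $a$ interact with itself via $\overline{a+t} \equiv \bar a(1+t\bar a)^{-1} \pmod n$, bilinearize, and feed the result into sum–product and additive‑combinatorial machinery (Bourgain–Garaev and successors) when $n$ is prime; but this gives power cancellation only for $L \ge n^{\theta}$ with some admissible $\theta<\tfrac12$, never the full square‑root saving, and nothing for highly composite $n$. What is genuinely provable is an averaged form: summing \eqref{HooleySumEq} over the modulus $n$, or over $b$, and applying the large sieve or the Kuznetsov formula gives mean‑square bounds of the predicted strength. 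The structural reason the pointwise short‑sum statement resists all of this is the absence, over $\Z$, of any geometric deformation of the modulus: there is no variety whose $\ell$‑adic cohomology ``sees'' an interval shorter than $\sqrt n$, so Deligne's equidistribution cannot be brought to bear. Supplying exactly this missing geometric input — $\ell$‑adic sheaves over $\mathbb{A}^1_{\F_q}$ and control of their geometric monodromy — is the purpose of the function‑field analogue developed in the rest of the paper, which is how one reaches near square‑root cancellation for short sums in that setting, subject to the stated slope and Artin–Schreier restrictions.
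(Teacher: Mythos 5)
The statement you were asked to prove is labeled as a \emph{conjecture} in the paper, and the paper neither proves it nor claims to: it is Hooley's Hypothesis~$R$ from 1972, cited as motivation for the function-field theorem that the paper actually establishes. You have correctly recognized this, and what you have written is not a proof but a summary of the state of the art — which is the honest and appropriate response. Your account of the completion method, of the Bourgain--Garaev regime, and of the averaged results is broadly accurate and matches the paper's own introductory survey, and your closing observation that the missing ingredient is a geometric deformation of the modulus is exactly the gap the paper's function-field theorem fills.

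One technical slip worth fixing: in Step~2 you claim that the completion bound $\ll_\epsilon n^{1/2+\epsilon}$ ``already proves \eqref{HooleySumEq} whenever $L \ge n^{1/2+\epsilon}$.'' That threshold is where completion starts to beat the \emph{trivial} bound $L$; it is not where it reaches the \emph{conjectured} bound $L^{1/2}n^\epsilon$. For $n^{1/2+\epsilon} \le L \le n^{1-\delta}$ one has $n^{1/2+\epsilon} > L^{1/2}n^{\epsilon'}$, so completion gives nontrivial but strictly sub-conjectural savings; Hypothesis~$R$ follows from P\'olya--Vinogradov plus Weil only when $L$ is within an $n^{o(1)}$ factor of full length. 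This distinction matters, since it is precisely what makes the conjecture nontrivial even in the ``long'' range below full length.
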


\begin{remark}

At times, we shall drop conditions such as $\gcd(a,n)=1$ from our sums, since the presence of the symbol $\overline{a}$ implicitly indicates that non-invertible residues (modulo $n$) are excluded.

\end{remark}

\cite{Hooley72} then goes on to give applications of this conjecture to the distribution of primes in arithmetic progressions. 
\cite{Hooley78} presents a minor variant of \cref{HypothesisR}, termed Hypothesis $R^*$, where a linear phase is added in the exponential, and uses it to give a lower bound on the number of integers $n \leq x$ for which $n^3 + 2$ has a prime factor exceeding $x^{31/30}$, as $x \to \infty$.
It is conceivable that this hypothesis can be used to obtain similar results for other polynomials in $n$, perhpas of arbitrary degree, building on the methods of \cite{Welsh}. 
The applications of Hypothesis $R^*$ to the distribution of primes in arithmetic progressions were further studied in \cite{Iwaniec82}.

Hooley's Hypotheses and their modifications found applications also in other problems.
Assuming this hypothesis, \cite{Bala--Conrey--Heath-Brown85} computes the second moment of the Riemann zeta-function on the critical line twisted by the square of a very long (but otherwise arbitrary) Dirichlet polynomial, building on ideas of \cite{Iwaniec80}.
Conditionally on a form of Hypothesis $R$ with $\overline{a}$ replaced by $\overline{a}^2$ for moduli $n$ that are squares, 
\cite[Theorem 2]{Fouvry16} obtains the lower bound in \cite[Conjecture 1]{Hooley84} on the number of nonsquare $D \leq x$ for which the fundamental solution of the Pell equation $t^2 - Ds^2 = 1$ is bounded from above by $D^{\frac{1}{2} + \alpha}$, for each $\frac{1}{2} < \alpha \leq \frac{2}{3}$.

Variants of these hypotheses, where one is allowed to twist the summand in \cref{HooleySumEq} by a multiplicative character $\chi(a)$, have been considered in \cite{Friedlander--Iwaniec87}.
With this twist, the sum may be viewed as an incomplete Sali\'{e} sum. These variants are then shown to imply the existence of integers $n \leq N$ with $|| \theta n^2 || \ll_\epsilon N^{-\frac{2}{3} + \epsilon}$, for any fixed $\theta \in \mathbb{R}$, any fixed $\epsilon > 0$, and every $N$ large enough, where $||y||$ is the distance in $\mathbb R$ from $y$ to the nearest integer.
 
\subsubsection{Approaches to incomplete exponential sums and their applications}

Several methods have been developed to make progress on the aforementioned hypotheses, resulting in unconditional applications. The P\'{o}lya--Vinogradov completion method, in conjunction with Weil's bound on Kloosterman sums, is capable of giving power savings in \cref{HooleySumEq} but only in case $\nu_2 - \nu_1 > n^{\frac{1}{2} + \epsilon}$. This method is perfected by \cite{Fouvry--Kowalski--Michel--Raju--Rivat--Sound17}.
A folklore application of the resulting bounds on \cref{HooleySumEq} is a level of distribution of $\frac{2}{3}$ for the binary divisor function in arithmetic progressions.

Another classical result resorting to the completion method is the equidistribution of the roots of a quadratic congruence to a random modulus established in \cite[Theorem 3]{Hooley63}. It is implicit in the proof that Hypothesis $R$ can be used to improve the power saving error term, and it is conceivable that it can also be used to obtain a power saving error term for congruences of arbitrary degree.
Yet another example is \cite[Theorem]{Iwaniec78} showing that the number of $n \leq x$ for which $n^2+1$ is a product of at most two primes is $\gg \frac{X}{\log X}$. It is implicit in \cite{Iwaniec78} that Hypothesis $R$ can be used to increase the implied constant.  
A more recent application is \cite[Theorem 1]{Fouvry16}, an unconditional result in the direction of the lower bound in the aforementioned \cite[Conjecture 1]{Hooley84}.

\cite{Heath-Brown01} developed a modular analog of van der Corput's method, obtaining power savings once the modulus $n$ in \cref{HooleySumEq} is `flexibly factorable', and applied it to obtain an unconditional variant of the aforementioned \cite[Theorem 3]{Hooley78} on the prime factors of integers of the form $n^3+2$. 
For extensions of these techniques to quartic polynomials in $n$, improvements on the cubic case, and more information on this problem, we refer to \cite{DM} and references therein.

\cite{Xi18} used Heath-Brown's method, and arguments from \cite{Fouvry16}, to make further progress on the lower bound in the aforementioned \cite[Conjecture 1]{Hooley84}, for each $\frac{1}{2} < \alpha \leq 1$.
\cite{Irving15} used this method to obtain a level of distribution which exceeds $\frac{2}{3}$ for the binary divisor function in arithmetic progressions to squarefree smooth moduli,
and \cite{Xi18b} established an improved level of distribution for the ternary divisor function in such progressions.
 
Karatsuba developed a method which saves a power of $\log(\nu_2 - \nu_1)$ in \cref{HooleySumEq} once $\nu_2 - \nu_1$ is not much smaller than $\nu_1$, see \cite{Korolev16}.
Remarkably, cancellation persists even if the length of summation is as short as $e^{\log^{2/3 + \delta}(n)}$ for some positive $\delta$, which is beyond the scope of \cref{HypothesisR}. 
\cite{Bourgain--Garaev14} refined Karatsuba's method, and used it to obtain an improvement of the Brun--Titchmarsh theorem.
\cite{Bourgain15} applied this method to (yet again) make progress on the lower bound in \cite[Conjecture 1]{Hooley84},
this time for $\alpha$ close to $\frac{1}{2}$.

\cite{Khan16} used Weyl differencing to improve on the cancellation in \cref{HooleySumEq} provided by the completion method in case $n = p^k$ where $p$ is an odd prime, $k \geq 7$ is an integer, and $\nu_2 - \nu_1$ lies in an appropriate range depending on $n$. A further improvement in case $k$ is large enough has been obtained in \cite{Liu--Shparlinski--Zhang18} which relied on Vinogradov's method. As a consequence, both works obtained an improved level of distribution for the binary divisor function in progressions to certain prime power moduli.
We refer to \cite[Chapter 8]{IwKo} for an exposition of the methods of van der Corput, Weyl, and Vinogradov.

\subsubsection{Additional averaging}

As has already been noted in the aforementioned \cite{Friedlander--Iwaniec87}, in some applications of \cref{HypothesisR} and its variants, one only needs to obtain (a certain amount of) cancellation in \cref{HooleySumEq} in the presence of additional summation over either $n$ or $b$. 
With this in mind, \cite{Duke--Friedlander--Iwaniec97} and \cite{Bettin--Chandee18} obtained unconditional substitutes for the conjectures above, allowing for very general forms of averaging over $n$ and $b$. 
These works found applications to many problems in analytic number theory.
Since our interest here will eventually be in function fields, we only mention (some) applications that could be of interest in that setting.
In particular, we do not discuss applications that are known to follow from the Generalized Riemann Hypothesis or the ABC conjecture, since both are theorems over function fields.

\cite{Granville--Shao19} obtained a Bombieri--Vinogradov theorem with a fixed residue class for many bounded multiplicative functions beyond the square-root barrier.
\cite{FR18} obtained a related result for certain unbounded arithmetic functions such as the generalized divisor functions.
These works are based on ideas of \cite{Green18} studying bounded multiplicative functions in arithmetic progressions to almost all prime moduli.
In these applications, numerically stronger (but, conditional) results can be obtained by invoking a form of Hypothesis $R$ instead of the unconditional estimates of \cite{Bettin--Chandee18}, see for instance \cite{Fouvry81}.

\cite{BCR17} used \cite{Bettin--Chandee18} to obtain an unconditional version of the aforementioned result from \cite{Bala--Conrey--Heath-Brown85} on the twisted second moment of the Riemann zeta-function. An analog of \cite{BCR17} for the family of primitive Dirichlet characters with prime modulus has been obtained in \cite{Bui--Pratt--Robles--Zaharescu20}.

Inspired by the Burgess method, \cite{Friedlander--Iwaniec85} devised a technique for reducing an averaged form of \cref{HooleySumEq} over $b$ to the problem of (square-root) cancellation in certain complete exponential sums. This led to a level of distribution exceeding $\frac{1}{2}$ for the ternary divisor function in arithmetic progressions to prime moduli. 
The method was further refined by \cite{Heath-Brown86}, \cite{Fouvry--Kowalski--Michel15}, and \cite{Sharma23} leading to a better level of distribution. 
%for the ternary divisor function.

Another unconditional replacement for Hooley's Hypothesis $R$, with numerous applications, is the work \cite{Deshouillers--Iwaniec82} utilizing the spectral theory of automorphic forms to obtain significant cancellation in the presence of sufficiently nice averaging over the modulus $n$ of a Kloosterman sum.
We refer to the survey \cite{Shparlinski12} for other works on variants of \cref{HypothesisR}, and a variety of further applications.

\subsubsection{More general short exponential sums}

Other incomplete exponential sums have also found applications in analytic number theory.
For example, the sum
\begin{equation} \label{ZhangSumEq}
\sum_{\substack{\nu_1 < a \leq \nu_2}} e \left( \frac{b\overline{a} + c\overline{a+1}}{n} \right)
\end{equation}
which bears a resemblance to \cref{HooleySumEq}, showed up in the study of the level of distribution of primes and divisor-like functions in arithmetic progressions to smooth moduli in \cite{Zhang14} and \cite{Wei--Xue--Zhang16}.
\cite{Polymath14} applied Heath-Brown's method to obtain the state of the art bound on \cref{ZhangSumEq}.
Similar sums appear also in \cite{Hooley78}.

A series of works starting with \cite{Burgess} developed tools to obtain cancellation in sums of the form
\begin{equation} \label{BurgessSum}
\sum_{\nu_1< a \leq \nu_2} \chi(P(a))
\end{equation}
where $\chi$ is a Dirichlet character, and $P \in \mathbb{Z}[X]$ is a polynomial. 
Burgess mostly studied the case of $\chi$ with cube-free conductor, and succeeded the most in case $P$ is linear, quadratic, or a product of linear factors, and $\nu_2 - \nu_1$ exceeds the fourth root of the conductor of $\chi$.
We refer to \cite{Chang09} for a further discussion of the method of Burgess and its refinements, and to \cite{Stepanov--Shparlinski90} for the case the conductor of $\chi$ is a large power of a fixed prime.
For a sample (partly conjectural) application of cancellation in \cref{BurgessSum} see \cite{Friedlander--Iwaniec--Mazur--Rubin13}.

Especially in the case of squarefree moduli, all the sums considered above are special cases of short sums of trace functions, a very general class of periodic functions of algebraic origin, whose significance in number theory is highlighted in the works of Fouvry, Kowalski, Michel, Sawin, and others. 
Another interesting example of a trace function is the (hyper)-Kloosterman function of $a$ given by
\begin{equation} \label{HyperKloostermanOverZexample}
\operatorname{Kl}_k(a;b) = \frac{(-1)^{k-1}}{n^{(k-1)/2}} \sum_{\substack{ x_1, \dots, x_k \in (\mathbb Z/ n \mathbb Z)^\times \\ x_1 \cdots x_k \equiv a \ \mathrm{mod} \ n}} e \left( \frac{b \cdot (x_1 + \dots + x_k)}{n} \right)
\end{equation}
where $k,n \geq 2$ are integers, and $b$ is coprime to $n$.
Short sums of these trace functions for $k=2$ have been studied in the aforementioned works \cite{Khan16} and \cite{Liu--Shparlinski--Zhang18}.
For more examples of trace functions we refer to what follows, and to \cite{Fouvry--Kowalski--Michel15b}.

Various aspects of short sums of trace functions have been studied. For instance, \cite{Perret-Gentil17} and \cite{Harp} consider the distribution of sums of trace functions over all short intervals. Other examples include \cite{Fouvry--Kowalski--Michel14}, and references therein, which study short sums of trace functions over primes and against multiplicative functions. We also mention \cite{Kowalski--Sawin16} where short sums of trace functions play an important role.  

%We briefly comment on the applicability of the previously discussed methods to this more general setting of trace functions.
The P\'olya--Vinogradov method is applicable to short sums of an arbitrary (irreducible) trace function.
Heath-Brown's variant of van der Corput's method applies to quite general trace functions, those having `slopes at most $1$ at infinity' (this restriction will reappear in the sequel).
On the other hand, the methods of Burgess and Karatsuba are much more specialized.

\subsection{Our results over function fields}

For a power $q$ of a prime number $p$, we replace the ring of integers $\mathbb{Z}$ by the polynomial ring $\F_q[u]$ in one variable $u$, over the finite field $\F_q$.
For a polynomial $f \in \F_q[u]$ we define its norm by $|f| = |\F_q[u]/(f)| = q^{\deg(f)}$ with the convention that $\deg(0) = -\infty$ so the norm of the zero polynomial is $0$.

\subsubsection{Short sums of trace functions}

We can view an interval in $\mathbb Z$ as the set of integers whose difference with a given integer (the center of the interval) has norm bounded from above by a certain quantity.
The above notion of norm in $\F_q[u]$ then gives us a corresponding definition of an interval in $\F_q[u]$.
In other words, a `short' interval around a polynomial $f \in \F_q[u]$ is the collection of polynomials that differ from $f$ by a `low-degree' polynomial.

\cite[Theorem 2.1]{SS19} gives a bound on the $\F_q[u]$-analog of \cref{BurgessSum} in case $P$ is linear and the conductor of $\chi$ is squarefree, whose quality is that of the bound in \cref{HypothesisR} once $q$ is large enough in terms of $\epsilon$. This short multiplicative character sum bound was then applied to make progress on the level of distribution of the M\"{o}bius and von Mangoldt functions in arithmetic progressions, the Chowla conjecture, and the twin primes problem over $\F_q[u]$ for $q$ satisfying suitable technical hypotheses.

Next we introduce the general notion of trace functions we want to sum in short intervals in $\F_q[u]$, and the assumptions we make on these functions for our main theorem to apply.

Fix throughout an auxiliary prime number $\ell$ different from $p$ and an embedding $\iota \colon \overline{\mathbb Q_\ell} \hookrightarrow \mathbb C$.
We will make tacit use of $\iota$ to view elements of $\overline{\mathbb Q_\ell}$ as lying in $\mathbb C$.
Let $\pi \in \mathbb F_q[u]$ be an irreducible polynomial, and let $\mathcal{F}$ be a (constructible \'etale $\overline{\mathbb Q_\ell}$-)sheaf on $\mathbb A^1_{\mathbb F_q[u]/(\pi)}$.
Its trace function $t = t_{\mathcal F}$ is given by
\begin{equation*} \label{DefTraceFuncFrobEq}
t \colon \mathbb F_q[u]/(\pi) \to \mathbb C, \quad t(x) = \tr(\mathrm{Frob}_{|\pi|}, \mathcal{F}_{\overline{x}}), \qquad x \in \mathbb A^1_{\mathbb F_q[u]/(\pi)}(\F_q[u]/(\pi)) = \F_q[u]/(\pi)
\end{equation*}
where $\overline x$ is a geometric point over $x$.
We often think of $t$ as a $\pi$-periodic function on $\F_q[u]$.

We make the following four assumptions on $\mathcal F$.

\begin{itemize}

\item As a normalization condition on the values of $t$ we assume that $\mathcal F$ is mixed of nonpositive weights. Each value of $t$ is then bounded from above by the (generic) rank of $\mathcal F$ - the dimension of the stalk of $\mathcal F$ at a geometric generic point of $\mathbb A^1_{\F_q[u]/(\pi)}$.
We denote this quantity by $r(t)$.
For the notion of `mixed of nonpositive weights' we refer to \cite[Definition 1.4]{SS20}.

\item We assume that $\mathcal F$ has no finitely supported sections. This minor assumption, which is satisfied in natural examples, means that the values of $t$ `vary naturally' with $x$, with no particular value of $t$ having `additional weight'. We can dispense with this assumption at the cost of making our bounds somewhat more cumbersome.
For the support of sections we refer to \cite[Equation (1.16)]{SS20}.

\item We assume that the geometric global monodromy of $\mathcal F$, a finite-dimensional Galois representation over $\overline{\F_q}(u)$, has no factor of the form $\mathcal L_{\psi}(\alpha x)$ for any $\alpha \in \overline{\F_q}$.
Here $\psi \colon \F_p \to \C^\times$ is a nontrivial additive character, and $\mathcal L_\psi$ is the associated Artin--Schreier sheaf for which we refer to \cite[Section 2.4]{SS20}. For $\alpha = 0$ our assumption means the absence of trivial factors.
This assumption is needed in order to rule out constant sums, and sums of additive characters on subspaces, which may not exhibit any cancellation.

\item We assume that the slopes of the (geometric) local monodromy representation of $\mathcal F$ at $\infty \in \mathbb P^1$, a finite-dimensional Galois representation over $\overline{\F_q}((u^{-1}))$, are bounded from above by $1$. For the notion of slopes we refer to \cite[Section 2.3]{SS20}, and to \cite[Chapter 1]{Katz88} where these are called `breaks'.
This is the most significant/restrictive of our four assumptions. 
It points to, what seems to be, a limitation of our method, and also a limitation for some forms of the van der Corput method.

\end{itemize}

For a squarefree $g \in \F_q[u]$, a trace function $t \colon \F_q[u]/(g) \to \C$ is a function of the form
\begin{equation*} 
t(x) = \prod_{\pi \mid g} t_{\pi} (x \ \mathrm{mod} \ \pi), \quad x \in \F_q[u]/(g)
\end{equation*}
where the product ranges over the monic irreducible factors of $g$, and for each such factor $t_\pi$ is a trace function associated to a sheaf $\mathcal F_\pi$ on $\mathbb A^1_{\F_q[u]/(\pi)}$.

\begin{thm} \label{MainRes}

Suppose that the sheaves $\{\mathcal{F}_\pi\}_{\pi \mid g}$ satisfy the four assumptions above.
Then
\begin{equation*} 
\sum_{\substack{f \in \F_q[u] \\ |f| < X}} t(f) \ll X^{\frac{1}{2}} |g|^{\log_q(2r(t) + c(t))}, \qquad X,|g| \to \infty
\end{equation*}
with the implied constant being $\sqrt q$.

\end{thm}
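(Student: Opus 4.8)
The plan is to realise the short sum geometrically and apply the Grothendieck--Lefschetz trace formula. Put $n=\lceil\log_q X\rceil$; then $\{f\in\F_q[u]:|f|<X\}=\{f:\deg f<n\}$ is a set of exactly $q^n\le qX$ elements, so it suffices to prove $\bigl|\sum_{\deg f<n}t(f)\bigr|\le q^{n/2}(2r(t)+c(t))^{\deg g}$ with absolute implied constant, since then $q^{n/2}\le\sqrt q\,X^{1/2}$ and $(2r(t)+c(t))^{\deg g}=|g|^{\log_q(2r(t)+c(t))}$ give the statement. I may assume $q^n\le|g|$, i.e.\ $n\le\deg g$: if $X>|g|$, then $g$-periodicity of $t$ gives $\sum_{\deg f<n}t(f)=q^{\,n-\deg g}\sum_{x\bmod g}t(x)$, a complete sum modulo $g$, controlled via the Riemann Hypothesis over the residue fields $\F_q[u]/(\pi)$.

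Next I would build the relevant sheaf. Over $\overline{\F_q}$ the squarefree $g$ splits into $\deg g$ distinct linear factors $u-\alpha$, so $\F_q[u]/(g)\otimes\overline{\F_q}=\prod_\alpha\overline{\F_q}$ via $f\mapsto(f(\alpha))_\alpha$, and for each root $\alpha$ of each $\pi\mid g$ the function $t_\pi$ is the trace of a power of Frobenius on a sheaf $\mathcal G_\alpha$ on $\mathbb A^1_{\overline{\F_q}}$, a geometric avatar of $\mathcal F_\pi$. Writing $\mathrm{ev}_\alpha\colon\mathbb A^n\to\mathbb A^1$ for the linear map $f\mapsto f(\alpha)$, distinctness of the roots makes the $\mathrm{ev}_\alpha$ pairwise non-proportional, and since $n\le\deg g$ the morphism $(\mathrm{ev}_\alpha)_\alpha\colon\mathbb A^n\hookrightarrow(\mathbb A^1)^{\deg g}$ is a closed immersion onto a linear subspace $L$. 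I set $\mathcal G:=\bigotimes_\alpha\mathrm{ev}_\alpha^{*}\mathcal G_\alpha=\bigl(\boxtimes_\alpha\mathcal G_\alpha\bigr)\big|_L$, a sheaf on $\mathbb A^n_{\F_q}$ whose natural $\F_q$-structure encodes the Galois action permuting the roots inside each $\pi$-orbit; by the trace identity for $\otimes$-induction, $\tr(\mathrm{Frob}_q,\mathcal G_{\bar f})=\prod_{\pi\mid g}t_\pi(f\bmod\pi)=t(f)$. Hence $\sum_{\deg f<n}t(f)=\sum_i(-1)^i\tr\bigl(\mathrm{Frob}_q,H^i_c(\mathbb A^n_{\overline{\F_q}},\mathcal G)\bigr)$.

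The substance is then a statement about this cohomology: $H^i_c(\mathbb A^n_{\overline{\F_q}},\mathcal G)$ vanishes for $i\ne n$, while $H^n_c$ has weight $\le n$ and dimension $\le(2r(t)+c(t))^{\deg g}$. For the concentration, the hypothesis that each $\mathcal F_\pi$ has no finitely supported sections is exactly the statement that $\mathcal G_\alpha[1]$ is perverse; hence $\boxtimes_\alpha\mathcal G_\alpha[\deg g]$ is perverse on $(\mathbb A^1)^{\deg g}$ with $H^\bullet_c$ concentrated in degree $\deg g$ (K\"unneth, each $H^\bullet_c(\mathbb A^1,\mathcal G_\alpha)$ being concentrated in one degree because $H^0_c=0$ by the section hypothesis and $H^2_c=0$ as $\mathcal G_\alpha$ has no trivial quotient, the $\alpha=0$ case of the ``no $\mathcal L_\psi(\alpha x)$'' hypothesis). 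The hypothesis that the slopes of each $\mathcal F_\pi$ at $\infty$ are $\le1$ is what makes $\boxtimes_\alpha\mathcal G_\alpha$ extend nicely to a compactification and makes the specific subspace $L$ meet the corresponding stratification --- crucially also at infinity --- transversally enough for the Artin--Lefschetz hyperplane restriction theorems to apply, so that $\mathcal G[n]$ is perverse on $\mathbb A^n$ and each hyperplane cut lowers the cohomological degree by one, down to $n$. The $\alpha\ne0$ case of the same hypothesis, together with non-proportionality of the $\mathrm{ev}_\alpha$ and a Goursat--Kolchin--Ribet analysis of the geometric monodromy of $\bigotimes_\alpha\mathrm{ev}_\alpha^{*}\mathcal G_\alpha$, rules out geometrically trivial constituents in $\mathcal G$ and in its restrictions to coordinate subspaces (an additive character $\psi(\sum_\alpha\beta_\alpha\mathrm{ev}_\alpha)$ could collapse to the trivial sheaf only through a linear relation among the $\mathrm{ev}_\alpha$, which is excluded), and this controls the weights: since each $\mathcal F_\pi$ is mixed of nonpositive weights, so is $\mathcal G$, whence $H^n_c$ has weight $\le n$ and $\bigl|\sum_{\deg f<n}t(f)\bigr|\le q^{n/2}\dim H^n_c$.

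For the dimension bound, concentration gives $\dim H^n_c=|\chi_c(\mathbb A^n,\mathcal G)|$; the generic rank of $\mathcal G$ is $\prod_\alpha\operatorname{rank}\mathcal G_\alpha=\prod_{\pi\mid g}r(t_\pi)^{\deg\pi}$, its non-lisse locus is a union of $\deg g$ hyperplanes (the $\mathrm{ev}_\alpha$-pullbacks of the singular loci of the $\mathcal F_\pi$), and --- again because the slopes at $\infty$ are $\le1$ --- the wild ramification along those hyperplanes and along the hyperplane at infinity is bounded purely in terms of the $c(t_\pi)$. Comparing $H^n_c(\mathbb A^n,\mathcal G)$ with $\bigotimes_\alpha H^1_c(\mathbb A^1,\mathcal G_\alpha)$ through the Lefschetz restriction to $L$ and bounding each $\dim H^1_c(\mathbb A^1,\mathcal G_\alpha)\le 2r(t_\pi)+c(t_\pi)$ by Grothendieck--Ogg--Shafarevich, I obtain $\dim H^n_c\le\prod_{\pi\mid g}(2r(t_\pi)+c(t_\pi))^{\deg\pi}\le(2r(t)+c(t))^{\deg g}$, using $r(t_\pi)\le r(t)=\prod_\pi r(t_\pi)$, $c(t_\pi)\le c(t)$, and $\#\{\pi\mid g\}\le\deg g$; combined with $q^{n/2}\le\sqrt q\,X^{1/2}$ this finishes the proof. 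The main obstacle is the sheaf-theoretic heart of the third paragraph --- the perversity, the vanishing of $H^i_c$ outside degree $n$, and the conductor bound on $\dim H^n_c$ --- which is precisely where the slopes-$\le1$ hypothesis (to keep the linear pullbacks $\mathrm{ev}_\alpha^{*}$ and the restriction to $L$ from creating uncontrolled wild ramification at infinity, which would break both the Lefschetz theorems and the Euler-characteristic estimates) and the exclusion of $\mathcal L_\psi(\alpha x)$ factors (to keep geometrically trivial pieces out of the tensor product) do the essential work.
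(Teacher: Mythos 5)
Your overall framework---realizing the short sum as $\sum_j(-1)^j\tr(\operatorname{Frob}_q,H^j_c(\mathbb A^n,\mathcal G))$ for the tensor product $\mathcal G=\bigotimes_\alpha\mathrm{ev}_\alpha^*\mathcal G_\alpha$, then controlling weights and dimensions---matches the paper's, and your normalization/periodicity reductions are fine. But the sheaf-theoretic heart of your argument rests on a claim that is both stronger than what the paper proves and, as the paper's own analysis shows, false in general.

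You assert that $H^i_c(\mathbb A^n,\mathcal G)$ vanishes for all $i\neq n$, arguing that perversity of $\boxtimes_\alpha\mathcal G_\alpha[\deg g]$ plus Artin--Lefschetz hyperplane restriction forces $\mathcal G[n]$ to be perverse and the cohomology to concentrate. The paper proves only vanishing for $j>n+1$ (its \cref{TechHeart}), and this weaker statement is precisely what the method yields: the cohomology in degree $n+1$ generally does \emph{not} vanish. The reason your Lefschetz argument fails is that the linear subspace $L=\{f(\alpha)\}_{\alpha}\subset(\mathbb A^1)^{\deg g}$ is not in general position with respect to the singular stratification of $\boxtimes_\alpha\mathcal G_\alpha$ and its compactification; the Artin--Lefschetz hyperplane theorem requires a genericity or transversality condition you have not established, and it in fact fails here. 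The paper circumvents this by a different mechanism: it views $\mathbb A^n$ as the \emph{special} fiber of the projection $\pi\colon\mathbb A^m\to\mathbb A^{m-n}$, shows by a Fourier-transform/semiperversity argument that the \emph{generic} fiber of $R\pi_!$ has the desired concentration (its Lemma \ref{generic-step}), and then controls the discrepancy between generic and special fibers via vanishing cycles. It is exactly here that the slope-$\le1$ hypothesis enters---not to ensure transversality, but to establish the ``translation-invariance'' property (Theorem \ref{TranslationInvarianceOnTheLine}) that forces the support of the vanishing cycles to be zero-dimensional (Lemmas \ref{complicated-vanishing}--\ref{weird-set-dimension}). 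Semiperversity of vanishing cycles then limits the extra contribution to degree $n+1$, not $n$; hence the paper's $q^{(n+1)/2}$ rather than your $q^{n/2}$.

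Two further points. First, ``no finitely supported sections'' gives you only semiperversity of $\mathcal G_\alpha[1]$, not full perversity; the paper is careful to argue with semiperversity throughout (and to feed it into the known fact that the $\ell$-adic Fourier transform preserves semiperversity). Second, you invoke a Goursat--Kolchin--Ribet analysis of the geometric monodromy of $\bigotimes_\alpha\mathrm{ev}_\alpha^*\mathcal G_\alpha$ to rule out trivial constituents; the paper never needs such an argument. Its hypothesis of no constant or Artin--Schreier factors is used instead through Katz's criterion that the Fourier transform of each $\mathcal F_i$ is then again a single sheaf (cited as \cite[Theorem 2 (1)]{Trav}), which is what makes the semiperversity argument for the generic fiber go through. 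The Euler-characteristic bound on $\dim H^n_c$ you sketch is also not how the paper obtains its Betti number bound; the paper imports a conductor estimate (\cite[Lemma 3.13]{SS20}) valid in the slope-$\le1$ setting rather than trying to compare with $\bigotimes_\alpha H^1_c(\mathbb A^1,\mathcal G_\alpha)$ via Lefschetz restriction.
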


Here the conductor of $t$, denoted by $c(t)$, is defined as in \cite[Equation (1.17), Definition 1.8]{SS20}.
As shown in the proof of \cite[Lemma 3.10 (6)]{SS20}, this conductor coincides with the Fourier conductor from \cite[Definition 3.8]{SS20}, which is the (generic) rank of the Fourier transform of the sheaf.
\cite[Lemma 3.10 (6)]{SS20} makes the stronger assumption of tame ramification at $\infty$, namely that the slopes are zero, but the proof works also under our weaker assumption that the slopes are at most $1$.
In case the sheaves arise from their Galois representations by middle extension, these conductors equal the logarithm of the Artin conductor of the Galois representation, see \cite[Remark 1.6]{SS20}.

\cref{MainRes} is stated only for (short) intervals around the zero polynomial but by translating the function (by pulling back the sheaves giving rise to it along a translation on the affine line) we can always reduce to this case.

\cref{MainRes} extends both the aforementioned bound on short multiplicative character sums from \cite[Theorem 2.1]{SS19}, and its generalization \cite[Theorem 1.10]{SS20} which assumes tame ramification at $\infty$, and the existence of an irreducible factor $\pi$ of $ g$ for which $t_\pi$ is a (shifted) multiplicative character. 
\cite[Theorem 1.10]{SS20} was applied to make progress on the Bateman--Horn conjecture, and on sums of trace functions against arithmetic functions over $\F_q[u]$.
Moreover, in \cref{GeneralizedHooleyRationalFunction} we will see that \cref{MainRes} applies to the function field versions of \cref{HooleySumEq}, \cref{ZhangSumEq} and \cref{BurgessSum}. In particular we obtain cancellation in the character sums from \cite[4.3]{Sawin20}.

The fourth assumption on the trace function in \cref{MainRes} cannot be removed completely since
\begin{equation*}
\sum_{\substack{f \in \F_q[u] \\ \deg(f) < m}} e \left( \frac{f^2}{g} \right) = q^m, \quad m < \frac{\deg (g) - 1}{2}. 
\end{equation*}
We refer to \cite[Section 2.4.1]{SS20} for the exponential function in $\F_q(u)$.

Even though the assumptions in \cref{MainRes} are, to some extent, similar to those in van der Corput's method, the savings we obtain in \cref{MainRes} are not always better (or always worse) than those provided by the latter method. For intervals that are not too short, our savings are superior once $q$ is sufficiently (but not extremely) large.
Moreover, \cref{MainRes} does not require the modulus $g$ to be smooth.
For smaller values of $q$, or very short intervals, our bounds are worse than trivial, whereas the van der Corput method is capable of establishing cancellation once the modulus is sufficiently smooth.
A similar comparison can be made between our method and the methods of P\'{o}lya--Vinogradov, Burgess, and Karatsuba.

The (hyper)-Kloosterman sheaf giving rise to the analog over $\F_q[u]$ of \cref{HyperKloostermanOverZexample} for squarefree moduli satisfies our four assumptions in view of \cite[Theorem 4.1.1]{Katz88} so \cref{MainRes} applies to it.

Now we specialize \cref{MainRes} to mixed character sums.

%We use \cite[Notation 4.1]{SS20} for short intervals, namely for a nonnegative integer $n$ we write
%\[
%\mathcal I_{f,n} = \{h \in \F_q[u] : |f-h| < q^n \}
%\]
%for the interval of length $q^n$ around $f$, consisting of polynomials that differ from $f$ by a polynomial of degree less than $n$.
\begin{cor} \label{GeneralizedHooleyRationalFunction}

Let $\chi \colon (\F_q[u]/(g))^\times \to \C^\times$ be a primitive multiplicative character.
Let $F \in \F_q[u]/(g)[T]$ be a polynomial, and let $a,b \in \F_q[u]/(g)[T]$ be polynomials such that for every irreducible factor $\pi$ of $g$ the degree (in $T$) of the reduction of $a$ modulo $\pi$ exceeds by at most $1$ the degree (in $T$) of the reduction of $b$ modulo $\pi$ which is less than $p$.
Suppose moreover that for every irreducible factor $\pi$ of $g$ the degree (in $T$) of the reduction of $F$ modulo $\pi$ is positive, or the reduction of $b$ modulo $\pi$ does not divide the reduction of $a$ modulo $\pi$.
Then for $n \leq \deg g$ we have
\begin{equation*}
\left| \sum_{\substack{h \in \F_q[u] \\ \deg h < n}} \chi (F(u,h))  e\left( \frac{a(u,h) \overline{b(u,h)}}{g} \right) \right| \leq  q^{\frac{n+1}{2}} \max\{\deg_T F + 2 \deg_T b + 2,  2 \deg_T b + 2\}^{\deg g}.
\end{equation*}

\end{cor}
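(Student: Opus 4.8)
The plan is to realize the summand as the trace function of a product of rank-one sheaves satisfying the four hypotheses of \thmref{MainRes}, and then to apply that theorem with $X = q^n$. Writing $g = \prod_{\pi \mid g} \pi$ (squarefree, as throughout), I would decompose $\chi = \prod_{\pi \mid g} \chi_\pi$ into its local components — each $\chi_\pi$ nontrivial because $\chi$ is primitive — and use the residue-theorem description of the exponential over $\F_q(u)$ from \cite[Section 2.4.1]{SS20}, which gives $e(m/g) = \prod_{\pi \mid g} e_\pi(m \bmod \pi)$ for $m$ coprime to $g$, each $e_\pi$ a fixed nontrivial additive character of $\F_q[u]/(\pi)$. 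Writing $F_\pi, a_\pi, b_\pi \in (\F_q[u]/(\pi))[T]$ for the reductions modulo $\pi$ and letting $j$ be the inclusion of the open locus $\{F_\pi \neq 0,\ b_\pi \neq 0\}$, I would set
\[
\mathcal F_\pi \;=\; j_{!}\bigl( \mathcal L_{\chi_\pi}(F_\pi) \otimes \mathcal L_{\psi_\pi}(a_\pi/b_\pi) \bigr) \quad\text{on}\quad \mathbb A^1_{\F_q[u]/(\pi)},
\]
where $\mathcal L_{\psi_\pi}$ is the Artin--Schreier sheaf attached to $e_\pi$; its trace function sends $x$ to $\chi_\pi(F_\pi(x))\, e_\pi\bigl(a_\pi(x)\,\overline{b_\pi(x)}\bigr)$ and vanishes where $F_\pi$ or $b_\pi$ does, so that $h \mapsto \prod_{\pi \mid g} t_{\mathcal F_\pi}(h \bmod \pi)$ is exactly the summand and is a trace function to the modulus $g$ in the sense of \thmref{MainRes}.

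Next I would verify the four hypotheses for each $\mathcal F_\pi$. Purity and the absence of finitely supported sections are routine: $\mathcal L_{\chi_\pi}$ and $\mathcal L_{\psi_\pi}$ are lisse and pure of weight $0$ on their natural domains, hence so is the tensor product on $\{F_\pi b_\pi \neq 0\}$, and its extension by zero is mixed of nonpositive weights with no finitely supported sections. For the slopes at $\infty$: $\mathcal L_{\chi_\pi}(F_\pi)$ is tame everywhere, while $\mathcal L_{\psi_\pi}(a_\pi/b_\pi)$ has a pole at $\infty$ of order $\max(0, \deg_T a_\pi - \deg_T b_\pi) \le 1$ by the first degree hypothesis, so the slope of $\mathcal F_\pi$ at $\infty$ is at most $1$; here the hypothesis $\deg_T b_\pi < p$ makes the finite pole orders of $a_\pi/b_\pi$ prime to $p$, so that the local structure of $\mathcal L_{\psi_\pi}(a_\pi/b_\pi)$ at the zeros of $b_\pi$ is the naive one (used below in the conductor count, and to guarantee genuine wild ramification there). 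The crux is the third hypothesis — no geometric constituent of the form $\mathcal L_\psi(\alpha x)$, the trivial sheaf being the case $\alpha = 0$ — and here I would split according to the built-in dichotomy. If $b_\pi \nmid a_\pi$, then $a_\pi/b_\pi$ has a genuine pole at a finite point, so $\mathcal L_{\psi_\pi}(a_\pi/b_\pi)$, and hence (tensoring with the tame $\mathcal L_{\chi_\pi}(F_\pi)$) the rank-one sheaf $\mathcal F_\pi$, is wildly ramified there; since every $\mathcal L_\psi(\alpha x)$ is lisse on all of $\mathbb A^1$, no such sheaf can be a constituent of $\mathcal F_\pi$. If instead $b_\pi \mid a_\pi$, then by the first hypothesis $a_\pi/b_\pi$ is a polynomial in $T$ of degree $\le 1$, so geometrically $\mathcal F_\pi \cong \mathcal L_{\chi_\pi}(F_\pi) \otimes \mathcal L_\psi(\beta x)$ for some $\beta$; the remaining hypothesis then forces $\deg_T F_\pi > 0$, so (together with the primitivity of $\chi$) $\mathcal L_{\chi_\pi}(F_\pi)$ is a tame, geometrically nontrivial, rank-one sheaf, whence $\mathcal L_{\chi_\pi}(F_\pi) \otimes \mathcal L_\psi(\beta x) \cong \mathcal L_\psi(\gamma x)$ would force $\mathcal L_{\chi_\pi}(F_\pi) \cong \mathcal L_\psi((\gamma - \beta)x)$ — impossible by comparing wild ramification at $\infty$ when $\gamma \neq \beta$, and by nontriviality when $\gamma = \beta$.

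Finally I would bound the invariants and conclude. Each $\mathcal F_\pi$ has rank $1$; its conductor, the generic rank of its Fourier transform, can be controlled by standard Euler-characteristic computations, the contributions of the zeros of $F_\pi$ (at most $\deg_T F_\pi$ of them), of the zeros of $b_\pi$ (at most $2\deg_T b_\pi$ in total, counting points together with their Swan conductors, by the $\deg_T b_\pi < p$ normalization), and of $\infty$ (tame, or of slope $1$) together yielding $2 r(t_{\mathcal F_\pi}) + c(t_{\mathcal F_\pi}) \le \max\{\deg_T F + 2\deg_T b + 2,\ 2\deg_T b + 2\}$ for every $\pi \mid g$, the second term covering the degenerate case in which $\mathcal L_{\chi_\pi}(F_\pi)$ contributes nothing (which by the third hypothesis forces $b_\pi \nmid a_\pi$). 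Since for a trace function to a squarefree modulus $g = \prod_\pi \pi$ one has $r(t) = 1$ and $c(t) = \max_{\pi \mid g} c(t_{\mathcal F_\pi})$, the same bound holds for $2 r(t) + c(t)$. Applying \thmref{MainRes} with $X = q^n$ — noting that $\{f : |f| < q^n\} = \{f : \deg f < n\}$, that $|g|^{\log_q y} = y^{\deg g}$ for any $y > 0$, and that the implied constant $\sqrt q$ supplies the factor $q^{1/2}$ — then gives
\[
\Bigl| \sum_{\substack{h \in \F_q[u] \\ \deg h < n}} \chi(F(u,h))\, e\!\left(\frac{a(u,h)\,\overline{b(u,h)}}{g}\right) \Bigr| \;\le\; q^{\frac{n+1}{2}}\, \max\{\deg_T F + 2\deg_T b + 2,\ 2\deg_T b + 2\}^{\deg g},
\]
which is the assertion. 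The main obstacle is the middle step: turning the elementary degree conditions on $(F,a,b)$ into the monodromy statement that no $\mathcal F_\pi$ acquires an Artin--Schreier (in particular, trivial) geometric constituent — precisely the constant sums and additive characters on subspaces that \thmref{MainRes} cannot control — and carrying the attendant conductor bound uniformly over the factors of $g$.
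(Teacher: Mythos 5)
Your proposal is correct and follows essentially the same route as the paper: decompose $\chi$ and the exponential over the monic irreducible factors of $g$, realize the local summand as the trace function of a rank-one sheaf $\mathcal L_{\chi_\pi}(F_\pi) \otimes \mathcal L_{\psi}(f_\pi a/b)$ on $\mathbb A^1_{\F_q[u]/(\pi)}$, verify the four hypotheses (the slope bound coming from $\deg_T a_\pi \le \deg_T b_\pi + 1$, the Artin--Schreier exclusion from the dichotomy in the hypothesis), bound the conductor by $\max\{\deg_T F + 2\deg_T b,\ 2\deg_T b\}$, and apply \thmref{MainRes} with $X=q^n$. The only variation is that you split the Artin--Schreier check by whether $b_\pi \mid a_\pi$ rather than by whether $\deg_T F_\pi > 0$ as the paper does, and you build the sheaf directly as an extension by zero where the paper quotes the lemmas of \cite{SS20} verbatim, but both case splits cover the hypothesis and lead to the same conclusion.
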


The primitivity of $\chi$ means that it does not factor via $(\F_q[u]/(D))^\times$ for any divisor $D$ of $g$.

To get the analog over $\F_q[u]$ of the sum in \cref{HooleySumEq} we take $a$ and $F$ to be the constant polynomial $1$, and $b$ to be linear in $u$.
For \cref{ZhangSumEq} we also put $F = 1$ and take $a/b$ to be a suitable rational function.
For \cref{BurgessSum} we take $a$ to be the zero polynomial and $b$ to be the constant polynomial $1$.

It is possible to prove a version of \cref{GeneralizedHooleyRationalFunction} with less restrictive, albeit more technical, assumptions.
We have opted for a statement which should be good enough for most applications, and is convenient to state and prove.

%\end{remark}

\subsubsection{Applications}

Here we present two applications of \cref{MainRes} to analytic number theory over $\F_q[u]$.
A plethora of applications can already be obtained by adapting to function fields the reductions to Hypothesis $R$ (or to other exponential sums) in the aforementioned (at times conditional) results over $\mathbb{Z}$.
Achieving a natural level of generality may require additional ideas, not necessarily from \'etale cohomology or the theory of exponential sums, and may be pursued in future works.
Here we chose however to present applications to problems whose reduction to short exponential sums has not (as far as we know) appeared previously in the literature. 
It is plausible that the results in this section can be adapted to results over $\mathbb{Z}$, conditional on cancellation in certain incomplete exponential sums, as in some of the results mentioned earlier in the introduction.
Unconditional progress over $\mathbb{Z}$ can also be made by using one of the aforementioned methods for getting cancellation in short exponential sums.

For a prime number $\lambda$ and a polynomial $P \in (\Z/\lambda\Z)[T]$, \cite{Mordell63} gave upper bounds on the least residue in $\Z/\lambda\Z$ which is not represented by $P$, in case $\deg(P)$ = 3. Mordell's arguments were generalized in \cite{Hudson66} and \cite{Williams66} to cover the case $\deg(P) = 4$,
and a quantitative variant of the problem has been considered in \cite{McCann--Williams67a} and \cite{McCann--Williams67b}.
An extension to polynomials $P$ of arbitrary degree has been obtained in \cite{Bombieri--Davenport66} which established the existence of $m \ll \lambda^{1/2} \log \lambda$ that is not represented by $P$, as soon as $\lambda$ is large enough compared to $\deg(P)$, and $P$ is not a permutation polynomial.

An immediate consequence of \cref{MainRes} and \cite[Proposition 6.7]{Fouvry--Kowalski--Michel14} is a result on the function field analog of Mordell's problem, that is analogous to a bound of the form $m \ll \lambda^\epsilon$ on the least residue not represented by $P$. We state it here in a quantitative form.

\begin{cor} \label{MordellCor}

Let $d$ be a positive integer, let $p>d$ be a prime number, and let $q$ be a power of $p$.
For an irreducible $\pi \in \F_q[u]$, and a polynomial $P \in \F_q[u]/(\pi)[T]$ with $\deg_T P = d$, let $\mathcal{P}$ be the set of values of $P$ mod $\pi$. 
Then for $X$ a power of $q$ we have
\begin{equation*}
\# \{f \in \F_q[u] : |f| < X, \ f \in \mathcal{P} \} \sim \frac{|\mathcal{P}|}{|\pi|} X, \quad |\pi| \to \infty,
\end{equation*}
as soon as $X \gg |\pi|^{3 \log_q(3 \cdot d!)}$.

\end{cor}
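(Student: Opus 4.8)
The plan is to expand the indicator function of $\mathcal P$ on $\F_q[u]/(\pi)$ as a constant main term plus a trace function of bounded conductor, apply \thmref{MainRes} to the trace function, and verify that the asserted range of $X$ makes the resulting error term of smaller order than the main term $|\mathcal P|X/|\pi|$. The input for the decomposition is \cite[Proposition 6.7]{Fouvry--Kowalski--Michel14}. Since $p > d = \deg_T P$, the map $P\colon \mathbb A^1 \to \mathbb A^1$ over $\F_q[u]/(\pi)$ is generically \'etale and tamely ramified everywhere --- in particular the unique point over $\infty$ has ramification index $d < p$ --- so the Galois cover of $\mathbb A^1$ cut out by the splitting field of $P(T) - X$ over $(\F_q[u]/(\pi))(X)$ is everywhere tame, with monodromy a transitive subgroup of $S_d$ (transitive because $P(T) - X$, being of degree $1$ in $X$, is geometrically irreducible over $(\F_q[u]/(\pi))(X)$). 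The cited proposition converts this into an identity
\[
\mathbf 1_{\mathcal P}(x) = \frac{|\mathcal P|}{|\pi|} + t(x), \qquad x \in \F_q[u]/(\pi),
\]
holding for all $x$ outside a set of $O_d(1)$ ramified values and up to an $O_d(|\pi|^{-1/2})$ shift in the constant (the exact constant being the probability that a random element of the monodromy group fixes a root, which differs from $|\mathcal P|/|\pi|$ by $O_d(|\pi|^{-1/2})$), where $t$ is the trace function of a sheaf on $\mathbb A^1_{\F_q[u]/(\pi)}$ --- more precisely, a bounded $\Z$-linear combination of such, equally amenable to \thmref{MainRes} --- with $2r(t) + c(t) \le 3 \cdot d!$. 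I would then verify the four hypotheses of \thmref{MainRes} for the sheaves involved: they are pure of weight $0$ (Deligne), hence mixed of nonpositive weights; they may be taken to be middle extensions, hence have no finitely supported sections; after the trivial-isotypic part is absorbed into the constant they carry no trivial geometric factor; and being everywhere tame they have all slopes at $\infty$ equal to $0$ (hence at most $1$) and contain no Artin--Schreier sheaf $\mathcal L_\psi(\alpha x)$ with $\alpha \neq 0$, such sheaves being wildly ramified at $\infty$.

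Now \thmref{MainRes} applies with the squarefree modulus $g = \pi$. Since $X$ is a power of $q$, the set $\{f \in \F_q[u] : |f| < X\}$ has exactly $X$ elements, so summing the identity (the exceptional values contributing $O_d(1)$ and the constant shift contributing $O_d(X|\pi|^{-1/2})$) gives
\[
\#\{f \in \F_q[u] : |f| < X,\ f \in \mathcal P\} = \frac{|\mathcal P|}{|\pi|}\,X + O_d\bigl(1 + X|\pi|^{-1/2}\bigr) + \sum_{\substack{f \in \F_q[u]\\ |f| < X}} t(f),
\]
and \thmref{MainRes} bounds the last sum by $\ll \sqrt q\, X^{1/2}\,|\pi|^{\log_q(2r(t)+c(t))} \le \sqrt q\, X^{1/2}\,|\pi|^{\log_q(3\cdot d!)}$. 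The image of a degree-$d$ polynomial over any field has at least $1/d$ of the cardinality of that field, so $|\mathcal P|/|\pi| \ge 1/d$ and the main term is $\gg_d X$. Dividing, the relative error is $O_d\bigl(X^{-1} + |\pi|^{-1/2}\bigr) + O_d\bigl(\sqrt q\,|\pi|^{\log_q(3 d!)}\,X^{-1/2}\bigr)$; for $X \gg |\pi|^{3\log_q(3\cdot d!)}$ the last term is $\ll_d \sqrt q\,|\pi|^{-\frac{1}{2}\log_q(3 d!)}$, and since $\log_q(3 d!) > 0$ (because $d \ge 1$) the whole relative error tends to $0$ as $|\pi| \to \infty$. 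This is the claimed asymptotic. In fact $X \gg |\pi|^{2\log_q(3\cdot d!) + \epsilon}$ would already suffice; the rounder exponent $3\log_q(3\cdot d!)$ is a deliberately wasteful but convenient choice.

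The summation step is routine once \thmref{MainRes} is available; the real content --- and the only genuine obstacle --- is the sheaf-theoretic input: identifying the combination of lisse sheaves that realizes $\mathbf 1_{\mathcal P}$ off the ramified locus, checking the four hypotheses, and controlling the conductor so as to produce the constant $3\cdot d!$. This is precisely the content of \cite[Proposition 6.7]{Fouvry--Kowalski--Michel14}, so in practice the proof consists of quoting that proposition and feeding its output into \thmref{MainRes}.
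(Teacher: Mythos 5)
Your proposal follows exactly the paper's route: quote \cite[Proposition 6.7]{Fouvry--Kowalski--Michel14} to write $\mathbf 1_{\mathcal P}$, off a bounded exceptional set, as $|\mathcal P|/|\pi|$ plus at most $d!$ trace functions $t_i$ with $|c_i|, r(t_i), c(t_i)\le d!$ and an $O(d!^3|\pi|^{-1/2})$ shift, verify the four hypotheses of \thmref{MainRes}, and feed the pieces in; the numerology (each $2r(t_i)+c(t_i)\le 3\,d!$, error $\ll X^{1/2}|\pi|^{\log_q(3d!)}$, main term $\gg_d X$, so $X\gg|\pi|^{3\log_q(3d!)}$ suffices) matches the paper, and your observation that $2\log_q(3d!)+\epsilon$ would already do is correct.

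The one place you glide over is the verification that the $\mathcal F_i$ have no trivial geometric factor. FKM delivers ``geometrically nontrivial,'' i.e.\ not a constant sheaf, but \thmref{MainRes} requires the stronger statement that the geometric global monodromy has no trivial subquotient; ``the trivial-isotypic part is absorbed into the constant'' asserts the former, not the latter. The paper bridges this with a short Clifford-theory argument: if $\overline{\mathbb Q_\ell}$ were a geometric quotient of some $\mathcal F_i$, then (geometric monodromy being normal in arithmetic, $\mathcal F_i$ arithmetically irreducible with finite monodromy) Clifford theory would force $\mathcal F_i$ to be geometrically a direct sum of constants, making the arithmetic monodromy abelian, hence the rank $1$, hence $\mathcal F_i$ geometrically trivial --- contradiction. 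That step is the only nonroutine verification in the whole proof and your write-up should include it rather than assert its conclusion.
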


Our second application computes the variance of certain trace functions in short intervals.

\begin{thm} \label{VarianceShortTraceSums}

Let $P \in \F_q[u]$ be an irreducible polynomial, and let $t$ be the trace function of a sheaf $\mathcal{F}$ on $\mathbb{A}^1_{\F_q[u]/(P)}$ that has no finitely supported sections, is punctually pure of weight $0$, has an irreducible geometric monodromy representation which is not isomorphic to the geometric monodromy of any nontrivial additive shift of $\mathcal{F}$ and not Artin--Schreier (or constant).
Then for $X \leq |P|$ a power of $q$, we have
\[
\frac{1}{|P|} \sum_{\substack{f \in \F_q[u] \\ |f| < |P|}} \left| \frac{1}{X^{1/2}} \sum_{\substack{g \in \F_q[u] \\ |f-g| < X}} t(g) \right|^2 =1 + O\left(X^{\frac{1}{2}} |P|^{-\frac{1}{2} + 2\log_q(3(r(t) + c(t))) }\right)
\]
where the implied constant depends only on $q$.
\end{thm}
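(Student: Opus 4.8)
The plan is to expand the square, complete the inner sums, Fourier--dualize, and apply \thmref{MainRes} to an endomorphism sheaf of the Fourier transform of $\mathcal F$. Write $X=q^{h}$ with $h\le\deg P$ and let $V\subset\F_q[u]/(P)$ be the $\F_q$-span of $1,u,\dots,u^{h-1}$, so that the short interval $\{g:|f-g|<X\}$ is the coset $f+V$. Expanding and using that $t$ is $P$-periodic, the left-hand side becomes $\tfrac1{|P|}\sum_{c\in V}\sum_{x\in\F_q[u]/(P)}t(x)\overline{t(x-c)}$; interchanging the sums and applying Plancherel on the finite abelian group $\F_q[u]/(P)$ turns this into $\tfrac{X}{|P|^2}\sum_{\psi\in V^{\perp}}|\hat t(\psi)|^2$, where $\hat t(\psi)=\sum_x t(x)\psi(x)$ and $V^{\perp}$ is the annihilator of $V$. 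Normalising the additive characters of $\F_q[u]/(P)$ as $\psi^{(m)}_P(x)=\mathbf e(mx/P)$ via the residue at $\infty$, one checks that $V^{\perp}=\{\psi^{(m)}_P:|m|<|P|/X\}$, a short interval on the dual line of length $|P|/X$.

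Now $\hat t(\psi^{(m)}_P)=-\,t_{\mathrm{FT}_\psi\mathcal F}(m)$, where $\mathrm{FT}_\psi\mathcal F$ is the $\ell$-adic Fourier transform of $\mathcal F$ over $\F_q[u]/(P)$. Since $\mathcal F$ is geometrically irreducible and not Artin--Schreier (or constant), Laumon's theory shows $\mathrm{FT}_\psi\mathcal F$ is a geometrically irreducible middle-extension sheaf, punctually pure of weight $1$, of rank $c(t)$ (the Fourier conductor of $\mathcal F$) and, by Fourier inversion, of conductor $r(t)$. Hence $|\hat t(\psi^{(m)}_P)|^2$ is the trace function at $m$ of $\mathrm{FT}_\psi\mathcal F\otimes\overline{\mathrm{FT}_\psi\mathcal F}=\mathrm{End}(\mathrm{FT}_\psi\mathcal F)(-1)$; splitting off the unique trivial geometric constituent, $\mathrm{End}(\mathrm{FT}_\psi\mathcal F)=\overline{\mathbb Q_\ell}\oplus\mathcal H$ with $\mathcal H:=\mathrm{End}^0(\mathrm{FT}_\psi\mathcal F)$ geometrically semisimple, punctually pure of weight $0$, with no trivial geometric constituent, and one gets $|\hat t(\psi^{(m)}_P)|^2=|P|+|P|\,t_{\mathcal H}(m)$. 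Summing over $V^{\perp}$ (which has $|P|/X$ elements) and dividing, the variance equals
\[
1+\frac{X}{|P|}\sum_{\substack{m\in\F_q[u]\\ |m|<|P|/X}}t_{\mathcal H}(m).
\]

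The final step is to bound this sum by \thmref{MainRes}, which forces one to verify its four hypotheses for $\mathcal H$. It is pure of weight $0$; it has no finitely supported sections after passage to its middle extension (the two trace functions differ at $O_{r(t),c(t)}(1)$ points by $O_{r(t),c(t)}(1)$, absorbed into the error term). It has no factor $\mathcal L_\psi(\alpha x)$: for $\alpha=0$ this is how $\mathcal H$ was built, and for $\alpha\ne 0$ such a factor would yield a geometric isomorphism $\mathrm{FT}_\psi\mathcal F\cong\mathrm{FT}_\psi\mathcal F\otimes\mathcal L_\psi(\alpha x)$; since the Fourier transform intertwines $\otimes\,\mathcal L_\psi(\alpha x)$ with additive translation and reflects isomorphisms, this would force $\mathcal F$ to be geometrically isomorphic to a nontrivial additive shift of itself, contrary to hypothesis. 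Finally its slopes at $\infty$ are bounded by those of $\mathrm{FT}_\psi\mathcal F$, which are at most $1$: by Laumon's stationary phase, the local monodromy of $\mathrm{FT}_\psi\mathcal F$ at $\infty$ is assembled from the local Fourier transforms $\mathrm{FT}^{(s,\infty)}$ at the finite singularities of $\mathcal F$ — all of slopes at most $1$ — and from $\mathrm{FT}^{(\infty,\infty)}$ of the part of $\mathcal F$ of slope $>1$ at $\infty$, which vanishes because $\mathcal F$ has slopes at most $1$ at $\infty$. With $\mathrm{FT}_\psi\mathcal F$ of rank $c(t)$ and conductor $r(t)$, the sheaf $\mathcal H$ has rank $c(t)^2-1$ and conductor at most $r(t)^2$ (using that the conductor of a tensor product is at most the product of the conductors and is self-dual), so $2r(t_{\mathcal H})+c(t_{\mathcal H})\le 2c(t)^2+r(t)^2\le\bigl(3(r(t)+c(t))\bigr)^2$; hence \thmref{MainRes} gives $\bigl|\sum_{|m|<|P|/X}t_{\mathcal H}(m)\bigr|\le\sqrt q\,(|P|/X)^{1/2}\,|P|^{2\log_q(3(r(t)+c(t)))}$, and multiplying by $X/|P|$ produces exactly the stated error term with implied constant $\sqrt q$. (If $c(t)=1$ then $\mathcal H=0$ and the variance is identically $1$.)

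The main obstacle is precisely this last verification — propagating the four hypotheses of \thmref{MainRes} through the Fourier transform — and within it, showing that $\mathrm{FT}_\psi\mathcal F$ inherits the bound $1$ on its slopes at $\infty$; this is where Laumon's local Fourier transform and the slope hypothesis on $\mathcal F$ are essential, and it is the point at which the fourth assumption of \thmref{MainRes} enters. The surrounding bookkeeping of ranks, conductors, weights and the Tate twist is routine, but must be done carefully in order to land on the exponent $2\log_q(3(r(t)+c(t)))$.
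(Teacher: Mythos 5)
Your proposal is correct, and it lands on the same error term, but it takes the Fourier-dual route rather than the paper's. The paper's own proof stays on the physical side: after expanding the square and changing variables, it interprets the inner autocorrelation sum $q^{-m}\sum_f t(f)\overline{t(f-h)}$ as the trace function of the additive convolution $Rs_!(\mathcal F\boxtimes N^*\mathcal F^\vee)$, peels off the $R^0$ and $R^2$ cohomology sheaves via the Leray spectral sequence for $s$ (the latter supplying the main term $1$ at $h=0$ via \cite[2.0.6]{Katz88}), and applies \thmref{MainRes} to the $R^1$-piece $\mathcal H$, summed over the interval $|h|<X$, after checking the four hypotheses for $\mathcal H$ by cohomological arguments. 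You instead apply Plancherel on $\F_q[u]/(P)$ to move the whole sum to the dual line, identify $V^\perp$ with the short interval $\{|m|<|P|/X\}$, and apply \thmref{MainRes} to $\mathrm{End}^0(\mathrm{FT}_\psi\mathcal F)$ over that dual interval. The two sheaves are Fourier transforms of each other, and the two interval lengths $X$ and $|P|/X$ are dual, so the two computations produce identical bounds. Your route is conceptually cleaner (no Leray spectral sequence), at the small cost of having to handle the finitely many points where $t_{\mathrm{End}^0}$ deviates from its middle-extension model; you note this and absorb it, which is fine since those points contribute $O(X/|P|)=O(X^{1/2}|P|^{-1/2})$. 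The central input is identical in both treatments: Laumon's theory is used to propagate the slope-$\le1$ hypothesis through $\mathrm{FT}_\psi$, and the geometric-shift hypothesis is used to exclude Artin--Schreier constituents.

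One imprecision worth flagging: the parenthetical claim that ``the conductor of a tensor product is at most the product of the conductors'' is not a correct general statement; the standard bounds are of the shape $\mathrm{rk}(\mathcal A)\cond(\mathcal B)+\mathrm{rk}(\mathcal B)\cond(\mathcal A)$, and indeed the paper's own bound via \cite[Proposition 2.7.2.1]{Lau} gives $c(t_{\mathcal H})\le c(t)^2$ and $r(t_{\mathcal H})\le 2r(t)c(t)+r(t)^2$ for the convolution picture (equivalently, rank $\le c(t)^2$ and conductor $\le 2r(t)c(t)+r(t)^2$ for your $\mathrm{End}^0$). Fortunately the exponent $2\log_q(3(r(t)+c(t)))$ has enough slack that either bound yields $2r(t_{\mathcal H})+c(t_{\mathcal H})\le 9(r(t)+c(t))^2$, so the final estimate is unaffected; but you should replace that parenthetical with a correct bound, either the rank--conductor product inequality or, cleaner, the observation that via Fourier inversion the conductor of $\mathrm{End}^0(\mathrm{FT}_\psi\mathcal F)$ equals the rank of the paper's convolution sheaf, which Laumon's formula controls.
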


Although the requirement that $\mathcal{F}$ satisfies the assumptions of \cref{MainRes} allows for a variety of trace functions, its appearance is mainly due to the limitations of our methods. The assumptions going beyond that, are made mostly for convenience, and can be removed at the cost of a more elaborate main term and proof. 
We can write the power of $|P|$ in the error term as $|P|^{-\frac{1}{2} + \epsilon}$ with $\epsilon$ becoming smaller as $q$ becomes larger.

The proof proceeds by expanding the square which leads one to consider autocorrelations of $t$ with shifts of itself, on average over the shift.
If one does not attempt to benefit from this additional averaging, and applies Delign'es bound to each autocorrelation separately, the resulting error term is $X|P|^{-1/2}$, which gives an asymptotic for the variance only for $X \ll |P|^{1/2}$. 

Our error term is obtained by utilizing the additional averaging over the shift, encoded in the trace function 
\[
h \mapsto \sum_{f \in \F_q[u]/(P)} t(f) \overline{t(f+h)}.
\]
The proof of \cref{VarianceShortTraceSums} uses Laumon's local Fourier transform to study the trace function above.
One can deduce from \cref{VarianceShortTraceSums} a lower bound for at least one of the short sums of $t$ of length $X$.

\subsubsection{Strategy of the proof of \cref{MainRes}}

As in many other results in function field number theory, including \cite{SS19,SS20}, our bounds for exponential sums arise from vanishing results for the high-degree compactly supported cohomology of a certain \'etale sheaf. \cite{SS19} and \cite{SS20} used two different methods to prove vanishing for high-degree compactly supported \'etale cohomology of the relevant sheaf.

\cite{SS19} used an approach by deformation, where one considers a family of sheaves of which the sheaf is one member, controls the discrepancy between the cohomology of one member of the family and a generic member using vanishing cycles, and then shows the vanishing of the high-degree compactly supported cohomology of a generic member of the family by somehow taking advantage of how the family varies. In \cite{SS19}, this was done by finding a single member of the family whose cohomology could be computed exactly. The family of sheaves is the family of incomplete exponential sums of a given length, and the single member is the sum of a multiplicative character over low degree polynomials (the interval around the zero polynomial), whose estimation boils down to the Riemann Hypothesis for curves over finite fields.

\cite{SS20} used an approach by Artin's affine theorem, which allows one to show vanishing of the high-degree ordinary \'etale cohomology, and one can then control the discrepancy between the ordinary \'etale cohomology and compactly supported \'etale cohomology by studying the derived pushforward of a sheaf to its compactification.

Both of these are fundamental methods in \'{e}tale cohomology that have been applied succesfully to multiple problems. In several respects, they are similar. Both require local study of the sheaf on a compactification (as it is necessary to compactify a family before vanishing cycles may be used to compute its cohomology). In both it is sufficient to prove local properties of the sheaf away from a Zariski closed set, and the number of cohomology groups one can hope to prove vanish depends on the codimension of the closed set. Some crucial differences are that the Artin's affine theorem method usually requires the sheaf to have sufficiently nontrivial monodromy at infinity, while the deformation method does not. On the other hand, the deformation method requires a separate argument to control the generic member of the family, which the Artin's affine theorem method does not.

In this paper we return to the deformation method of \cite{SS19}, but with a new strategy for the details that renders it much more general than \cite{SS20} instead of much less general, like \cite{SS19}. As in \cite{SS19}, the family of sheaves we construct corresponds to the family of short exponential sums given by summing the same trace function over different short intervals of the same length.

To control the generic member of the family, instead of an exact calculation, which does not seem possible in general, we use a Fourier transform method. 
It is straightforward to check that the Fourier transform of the function
\begin{equation} \label{FunctionSelectIntervalShortSum}
h \mapsto \sum_{ \substack{f \in \mathbb F_q[u] \\ |f|<X}} t(f+h), \qquad h \in \mathbb F_q[u],
\end{equation}
is the restriction of the Fourier transform of $t$ to the perpendicular space to $\{ f \in \mathbb F_q[u], |f|<X\}$. A geometric analogue of this argument lets us compute that the $\ell$-adic Fourier transform of the natural sheaf whose trace function is the one in \cref{FunctionSelectIntervalShortSum} from the Fourier transforms of the sheaves giving rise to $t$. From this, and the Katz--Laumon result that Fourier transforms preserve (semi)perversity, we get that this sheaf is (semi)perverse, which gives strong control of the stalk at the generic point, which is the compactly supported cohomology of the generic member of the family. 

To control vanishing cycles, we isolate a crucial property called translation-invariance. Our sheaves $\mathcal F_\pi$ are locally isomorphic to their pullbacks by translation $\mathbb A^1 \to \mathbb A^1$ everywhere away from the finite singularities. At finite points away from the singularities, the sheaf is lisse (roughly, locally constant), since by definition the singularities are where the sheaf is not lisse. More remarkably, the same property is true locally near the point $\infty$ in the usual compactification $\mathbb P^1$ of $\mathbb A^1$. This is the (only) place where our assumption that the slopes of $\mathcal F_\pi$ at $\infty$ are bounded above by $1$ is used - we show that this bound on the slopes is equivalent to translation-invariance (at $\infty$).

The sheaf we must calculate the vanishing cycles of is constructed from the individual sheaves $\mathcal F_\pi$. The vanishing cycles measures how much this sheaf varies as we move the interval around. Since moving the interval around is a form of translation, only the sheaves $\mathcal F_\pi$ which are singular at a given point matter. When only a few sheaves matter, the function $t$ behaves like a function on $\mathbb F_q[u]/(\widetilde g)$ for $\widetilde g$ some proper divisor of $g$. As long as $|\widetilde g|$ is less than $X$, the sum in \cref{FunctionSelectIntervalShortSum}  is independent of $h$, suggesting that the cohomology does not vary and thus that there are no vanishing cycles. This is indeed what happens, namely vanishing cycles occur only at points where many sheaves $\mathcal F_\pi$ are singular. These points form a closed set of high codimension, giving a strong vanishing result for compactly supported \'etale cohomology.

\section{Translation-Invariant Sheaves on $\mathbb A^1$} \label{TransInvSection}

Here we introduce and study a key local property of sheaves on the affine line called translation-invariance.

Let $\mathcal F$ be a sheaf on $\mathbb A^1$, let $\upsilon \colon \mathbb A^1 \to \mathbb P^1$ be the usual open immersion, and let $\textup{trans} \colon \mathbb P^1 \times \mathbb A^1 \to \mathbb P^1$ be the unique map whose restriction to $\mathbb A^1 \times \mathbb A^1 \subset \mathbb A^1 \times \mathbb P^1$ is the addition map $\mathbb A^1 \times \mathbb A^1 \to \mathbb A^1$. That is
\[\textup{trans} ( (a_0: a_1) , b) = (a_0: a_1 + b a_0 ).\]
Let $\pi_1 \colon \mathbb P^1 \times \mathbb A^1 \to \mathbb P^1 $ be the projection onto the first factor.

\begin{defi} We say $\mathcal F$ is translation-invariant at a point $x \in \mathbb P^1$ if there exists an isomorphism between 
$\pi_1^* \upsilon_! \mathcal F$ and $\textup{trans}^* \upsilon_! \mathcal F$ after pullback to a suitable \'etale neighborhood of $(x,0) \in \mathbb P^1 \times \mathbb A^1.$ \end{defi}

\begin{lem} \label{AffineTranslationInvariance}

For $x \in \mathbb A^1 \subset \mathbb P^1$, the sheaf $\mathcal F$ is translation-invariant at $x$ if and only if $\mathcal F$ is lisse at $x$. 

\end{lem}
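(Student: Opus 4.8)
### Proof proposal for Lemma \ref{AffineTranslationInvariance}

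The plan is to prove both implications directly from the definitions, exploiting that near a finite point $x$ the map $\textup{trans}$ is just an ordinary translation and the open immersion $\upsilon$ plays no role. First I would record the trivial reduction: since $x \in \mathbb{A}^1$, in an \'etale (indeed Zariski) neighborhood of $x$ we have $\upsilon_! \mathcal{F} = \mathcal{F}$, so the definition of translation-invariance at $x$ becomes the existence of an isomorphism $\pi_1^* \mathcal{F} \cong \textup{trans}^* \mathcal{F}$ after pullback to an \'etale neighborhood of $(x,0)$ inside $\mathbb{A}^1 \times \mathbb{A}^1$, where now $\textup{trans}(a,b) = a+b$ is the addition/translation map.

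For the ``if'' direction, suppose $\mathcal{F}$ is lisse at $x$, say lisse on a Zariski open $U \ni x$ in $\mathbb{A}^1$. Then $\mathcal{F}|_U$ is a lisse sheaf, and both $\pi_1^* \mathcal{F}$ and $\textup{trans}^* \mathcal{F}$ are lisse on the open set $W = \{(a,b) : a \in U \text{ and } a+b \in U\} \subset \mathbb{A}^1 \times \mathbb{A}^1$, which contains $(x,0)$. I would then argue that these two lisse sheaves on $W$ become isomorphic after restriction to a small enough \'etale neighborhood of $(x,0)$: indeed, restricting to the slice $\{a = x\}$ (or more robustly, using that the fundamental group of a sufficiently small \'etale/henselian neighborhood of $(x,0)$ maps to the fundamental group of $U$ the same way through either $\pi_1$ or $\textup{trans}$, since both maps send $(x,0)$ to $x$ and agree to first order), the two pullback representations of the local fundamental group coincide. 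Concretely, the composite $\text{Spec}(\mathcal{O}^{h}_{\mathbb{A}^1\times\mathbb{A}^1,(x,0)}) \to W \xrightarrow{\pi_1} U$ and the composite with $\textup{trans}$ are both dominated by the henselization, and one checks the induced maps on $\pi_1$ are equal — this is where a small amount of care is needed, but it follows because $\pi_1$ and $\textup{trans}$ restricted to $\{a=x\} \times (\text{nbhd of } 0)$ are the same constant map $x$, and the henselian neighborhood retracts onto that slice for the purpose of $\pi_1$-computations of a lisse sheaf pulled back from $U$.

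For the ``only if'' direction, suppose $\mathcal{F}$ is translation-invariant at $x$, so $\pi_1^* \mathcal{F} \cong \textup{trans}^* \mathcal{F}$ on some \'etale neighborhood $V \to \mathbb{A}^1 \times \mathbb{A}^1$ of a point $v$ over $(x,0)$. Now $\pi_1^*\mathcal{F}$ is lisse at $v$ if and only if $\mathcal{F}$ is lisse at $x$ (pullback along the smooth map $\pi_1$ neither creates nor destroys lisse-ness at points lying over $x$, and $V \to \mathbb{A}^1\times\mathbb{A}^1$ is \'etale hence also preserves and reflects lisse-ness). On the other hand, the isomorphism forces $\pi_1^*\mathcal{F}$ to be lisse at $v$ if and only if $\textup{trans}^*\mathcal{F}$ is lisse at $v$, and the latter holds iff $\mathcal{F}$ is lisse at $\textup{trans}(v) = x + 0 = x$ — wait, that is circular; the point rather is to use that $\textup{trans}^*\mathcal{F}$ is lisse on an open set whose structure I can compare with that of $\pi_1^*\mathcal{F}$. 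The clean way: let $Z \subset \mathbb{A}^1$ be the (finite, hence closed) set of non-lisse points of $\mathcal{F}$. Then $\pi_1^*\mathcal{F}$ is non-lisse exactly on $Z \times \mathbb{A}^1$, while $\textup{trans}^*\mathcal{F}$ is non-lisse exactly on $\{(a,b): a+b \in Z\}$. If $x \in Z$, these two ``non-lisse loci'' are distinct divisors through $(x,0)$ (the first is vertical, the second is the anti-diagonal shifted to pass through $(x,0)$, with distinct tangent directions), hence they remain distinct after pullback to any \'etale neighborhood of any point over $(x,0)$; this contradicts the isomorphism $\pi_1^*\mathcal{F} \cong \textup{trans}^*\mathcal{F}$, which would force the two loci to agree. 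Therefore $x \notin Z$, i.e. $\mathcal{F}$ is lisse at $x$.

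The main obstacle is the ``if'' direction: making rigorous the claim that two lisse sheaves pulled back from $U$ along $\pi_1$ and along $\textup{trans}$ agree on a small \'etale neighborhood of $(x,0)$. The cleanest formulation is probably to note that $\textup{trans}$ and $\pi_1$ restrict to the same map on the closed subscheme $\mathbb{A}^1 \times \{0\}$, that this closed subscheme is a section of $\pi_1$ passing through $(x,0)$, and that for a lisse sheaf the pullback to a neighborhood of a point is determined by its restriction to any such section together with the homotopy-invariance properties of \'etale $\pi_1$ for the affine line / its henselizations — alternatively one can invoke that both $\pi_1^*\mathcal{F}$ and $\textup{trans}^*\mathcal{F}$ are lisse near $(x,0)$ and tame-vs-wild or monodromy comparison shows the representations of $\pi_1(\text{nbhd})$ factor through $\pi_1$ of $U$ via the \emph{same} homomorphism because $\pi_1$ and $\textup{trans}$ induce the same map $\mathbb{A}^1 \times \{0\} \hookrightarrow \mathbb{A}^1 \times \mathbb{A}^1 \to \mathbb{A}^1$. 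I expect the author handles this with a short argument along these lines; I would present it via the section $\mathbb{A}^1 \times \{0\}$ and lisse-ness.
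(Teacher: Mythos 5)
Your argument for the \emph{only if} direction is correct, but it is a genuinely different route from the paper's. You compare the non-lisse loci: $\pi_1^*\mathcal F$ is singular along $Z \times \mathbb A^1$ and $\textup{trans}^*\mathcal F$ along $\{a+b \in Z\}$, and if $x \in Z$ these are distinct divisors through $(x,0)$ with distinct tangent directions, which persists under \'etale pullback and so contradicts the hypothesized isomorphism. The paper instead restricts both sheaves to the slice $\{x\} \times \mathbb A^1$, where $\pi_1$ is the constant map to $x$, so $\pi_1^*\upsilon_!\mathcal F$ restricts to a constant sheaf; the isomorphism then forces the restriction of $\textup{trans}^*\upsilon_!\mathcal F$, which is $[+x]^*\mathcal F$ (since $\textup{trans}\circ i_x = \upsilon\circ [+x]$), to be locally constant at $0$, i.e.\ $\mathcal F$ is lisse at $x$. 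Both are valid; the paper's slice computation is a bit more direct while your divisor-comparison argument is a nice geometric alternative.

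The \emph{if} direction, however, contains a concrete error and is not a complete argument as written. You claim, in support of the assertion that $\pi_1$ and $\textup{trans}$ induce the same map on fundamental groups, that ``$\pi_1$ and $\textup{trans}$ restricted to $\{a = x\} \times (\text{nbhd of } 0)$ are the same constant map $x$.'' This is false: $\textup{trans}(x,b) = x + b$ is not constant on that slice. (The slice on which they do agree is $\mathbb A^1 \times \{0\}$, which you mention at the end, but the subsequent ``retraction'' claim is then a handwave, not an argument.) The paper avoids fundamental-group comparisons entirely: since $\mathcal F$ is lisse at $x$, one picks an \'etale neighborhood $U \to \mathbb P^1$ of $x$ on which $\upsilon_!\mathcal F$ becomes \emph{constant} (not merely lisse), and then the fiber product over $\mathbb P^1 \times \mathbb A^1$ of $\pi_1^{-1}(U)$ and $\textup{trans}^{-1}(U)$ is an \'etale neighborhood of $(x,0)$ on which both $\pi_1^*\upsilon_!\mathcal F$ and $\textup{trans}^*\upsilon_!\mathcal F$ are constant of the same rank, hence isomorphic, with no need to compare how the two maps act on $\pi_1^{\mathrm{\acute et}}$. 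You should adopt this shortcut: upgrading ``lisse'' to ``constant on an \'etale neighborhood'' is exactly the move that makes the claim trivial.
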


\begin{proof} 

For the `if' direction, because $\mathcal F$ is lisse at $x \in \mathbb A^1$, the sheaf $\upsilon_! \mathcal F$ is lisse at $x \in \mathbb P^1$. Take an \'{e}tale neighborhood $U$ of $x$ over which $\upsilon_! \mathcal F$ is constant. Then the fiber product over $\mathbb P^1 \times \mathbb A^1$ of the inverse images $\pi_1^{-1}(U)$ and $\textup{trans}^{-1}( U)$ is an \'etale neighborhood of $(x,0)$ over which the pullbacks of $\pi_1^* \upsilon_! \mathcal F$ and $\textup{trans}^* \upsilon_! \mathcal F$ are constant sheaves of the same rank, hence isomorphic to each other.

For the `only if' direction, take an \'etale neighborhood $U$ of $(x,0)$ over which $\pi_1^* \upsilon_! \mathcal F$ and $\textup{trans}^* \upsilon_! \mathcal F$ become isomorphic. Restrict this neighborhood $U$ to an \'etale neighborhood $U'$ of $(x,0) \in \{x\} \times \mathbb A^1$. Abusing notation, we see that $\pi_1 \colon \{x\} \times \mathbb A^1 \to \mathbb P^1$ is a constant morphism, so the restriction of $\pi_1^* \upsilon_! \mathcal F$ to $U'$ is a constant sheaf. It follows that the restriction of $\textup{trans}^* \upsilon_! \mathcal F$ to $U'$ is constant as well. 
In terms of the map 
\[
i_x \colon \mathbb A^1 \to \mathbb P^1 \times \mathbb A^1, \quad i_x(y) = (x,y)
\]
this means that $i_x^* \textup{trans}^* \upsilon_! \mathcal F$ is locally constant at $0$. 
Let $[+x] \colon \mathbb A^1 \to \mathbb A^1$ be the map given by 
\[
[+x](y) = x + y 
\]
and note that $\textup{trans} \circ i_x = v \circ [+x]$ so 
\[
i_x^* \textup{trans}^* \upsilon_! \mathcal F = [+x]^* \upsilon^* \upsilon_! \mathcal F = [+x]^* \mathcal F. 
\]
We conclude that $[+x]^* \mathcal F$ is locally constant at $0$, 
 so $\mathcal F$ is locally constant at $x$, as desired.
\end{proof}

\begin{thm} \label{TranslationInvarianceOnTheLine}

A sheaf $\mathcal F$ on $\mathbb A^1$ is translation-invariant at $\infty \in \mathbb P^1$ if and only if all the slopes of the local monodromy representation of $\mathcal F$ at $\infty$ are bounded from above by $1$.

\end{thm}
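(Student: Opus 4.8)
The statement is a local assertion at $\infty$, so I would reduce everything to the formal punctured disk at $\infty$ and work with the local monodromy representation $V = \mathcal F_{(\infty)}$, a representation of the inertia group $I_\infty$ (or rather, the relevant Galois group of $\overline{\F_q}((u^{-1}))$). The key observation is that the map $\textup{trans}$, restricted to a neighborhood of $(\infty,0)$, is a family of translations $[+b]$ parametrized by $b$ near $0$, and translation-invariance at $\infty$ says precisely that the pullbacks $[+b]^*\mathcal F$ become isomorphic to $\mathcal F$, locally at $\infty$, uniformly/continuously in $b$. So the heart of the matter is: \emph{when does the local monodromy representation at $\infty$ not change (up to isomorphism, in a family) under translation $u \mapsto u + b$?}

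First I would set up coordinates: near $\infty$, put $v = 1/u$, so $\infty$ is $v=0$, and translation $u \mapsto u+b$ becomes $v \mapsto v/(1+bv)$, which for $b$ near $0$ is a formal automorphism of the disk $\Spec \overline{\F_q}((v))$ fixing $v = 0$ and with derivative $1$ at the origin. The slopes of $\mathcal F$ at $\infty$ are by definition the breaks of $V$ as an $I_\infty$-representation, measured with respect to the valuation of $v$. The claim to prove is: $V$ is "translation-invariant" (isomorphic to its pullbacks along this family of automorphisms, in a family over the $b$-disk) iff all breaks are $\le 1$.

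For the "only if" direction I would argue by contradiction using the break decomposition $V = \bigoplus_\lambda V_\lambda$. If some break $\lambda > 1$ occurs, I would look at the determinant or at a Jordan--Hölder constituent of top break $\lambda$; by the structure theory of representations of local fields in the wild range (Katz, \cite{Katz88} Chapter 1), a break-$\lambda$ piece with $\lambda = c/c'$ is, after passing to a ramified cover, a sum of characters whose "leading term" is of the form $\psi(\alpha v^{-\lambda} + \cdots)$. Pulling back along $v \mapsto v/(1+bv) = v - bv^2 + \cdots$ changes $v^{-\lambda}$ to $(v - bv^2 + \cdots)^{-\lambda} = v^{-\lambda}(1 - bv + \cdots)^{-\lambda} = v^{-\lambda} + \lambda b\, v^{-\lambda+1} + \cdots$; the correction term $\lambda b\, v^{-\lambda + 1}$ has pole order $\lambda - 1$, which is \emph{positive} exactly when $\lambda > 1$, so it genuinely perturbs the representation in a way depending on $b$, and one sees the family is not constant — concretely, the "leading nonconstant deformation" is detected by a swan conductor computation or by evaluating on a suitable element of wild inertia. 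The slope $= 1$ case is the boundary: there the correction $\lambda b\, v^{-\lambda+1} = b\, v^0$ is a \emph{constant}, i.e. it only twists by an unramified character, which does not change the isomorphism class over $\overline{\F_q}$, and slopes $< 1$ give corrections with no pole at all. This is the same phenomenon as $\sum e(f^2/g) = q^m$ being non-oscillatory: the slope-$2$ sheaf $\mathcal L_\psi(u^2)$ has $\mathcal L_\psi((u+b)^2) = \mathcal L_\psi(u^2) \otimes \mathcal L_\psi(2bu) \otimes \mathcal L_\psi(b^2)$, and the factor $\mathcal L_\psi(2bu)$ is nontrivial and $b$-dependent.

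For the "if" direction (slopes $\le 1$ $\Rightarrow$ translation-invariant) I would construct the isomorphism directly. When all slopes are $\le 1$, I claim $\upsilon_! \mathcal F$ is, in a formal/étale neighborhood of $(\infty, 0)$ in $\mathbb P^1 \times \mathbb A^1$, isomorphic under the two pullbacks. The cleanest route: the local monodromy representation of slope $\le 1$ at $\infty$ can be written (Katz) after a tame base change as built out of Artin--Schreier characters $\mathcal L_\psi(\alpha u)$ with $\alpha \in \overline{\F_q}$ (slope exactly $1$ pieces) and tame pieces (slope $0$); and for these building blocks the translation $u \mapsto u+b$ only introduces the factor $\mathcal L_\psi(\alpha b)$, which is an unramified (indeed geometrically constant) character and hence can be trivialized after an étale base change in the $b$-variable — uniformly, producing the desired isomorphism of sheaves on the neighborhood. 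Tame pieces are genuinely invariant under any automorphism fixing $\infty$ with derivative $1$. One assembles these to get the global isomorphism over the neighborhood. The main obstacle I anticipate is making the "in a family over $b$" quantifier rigorous on both sides: on the "if" side, arranging that the trivialization of the $\mathcal L_\psi(\alpha b)$ factors and the rigidification of the tame parts can be done \emph{simultaneously} over one étale neighborhood of $(\infty, 0)$ rather than fiber by fiber; on the "only if" side, extracting from a hypothetical family isomorphism a contradiction — one wants to specialize to a single well-chosen $b \neq 0$ and compare Swan conductors or the refined break data of $\mathcal F$ and $[+b]^*\mathcal F$, using that the family being locally constant forces these invariants to agree, whereas the $v^{-\lambda+1}$ perturbation computation above shows they differ when $\lambda > 1$. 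Getting the bookkeeping of "étale neighborhood of a point in a product" versus "formal neighborhood" correct, and handling the subtlety that translation-invariance is about $\upsilon_!\mathcal F$ on $\mathbb P^1$ (not just the local representation), is where I'd spend the most care.
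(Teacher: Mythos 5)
Your overall geometric picture is right, and your rank-one example $\mathcal L_\psi(u^2)$ identifies exactly the phenomenon at stake. But both directions of your sketch differ from the paper's proof, and the \emph{only if} direction as you have written it contains a genuine error.

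\textbf{The ``only if'' direction.} You propose to specialize to a single $b \neq 0$ and ``compare Swan conductors or the refined break data of $\mathcal F$ and $[+b]^*\mathcal F$.'' This cannot work with Swan conductors (and more generally with any numerical break data): the map $[+b]$ extends to an automorphism of the formal disk at $\infty$ fixing the closed point, so $\mathcal F$ and $[+b]^*\mathcal F$ have identical break decompositions, Swan conductors, and slopes at $\infty$ for every $b$. They can differ as $I_\infty$-representations (your $\mathcal L_\psi(u^2)$ versus $\mathcal L_\psi(u^2 + 2bu)$ example shows this correctly, via the nontrivial twist $\mathcal L_\psi(2bu)$), but detecting that difference requires a finer invariant than break data, and the perturbation heuristic ``$\lambda b\, v^{-\lambda+1}$ has positive pole order'' is not straightforward to convert into such an invariant in the non-abelian wild case. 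The paper instead uses Laumon's local Fourier transform $\mathscr F_\psi^{\infty,\infty}$, showing that translation by $a$ on the source corresponds to tensoring by $\mathcal L_\psi(a/t)$ on the target; translation-invariance then forces $\mathscr F_\psi^{\infty,\infty}(\mathcal F)$ to absorb infinitely many distinct rank-one twists, hence to vanish, and since $\mathscr F_\psi^{\infty,\infty}$ is exact and an equivalence on the slope-$>1$ part, all slopes must be $\le 1$. This is the idea you are missing, and it cleanly sidesteps the structure theory.

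\textbf{The ``if'' direction.} Your route via Katz's structure theory (slope $\le 1$ $\Rightarrow$ after tame base change, built from $\mathcal L_\psi(\alpha u)$ and tame pieces, each of which only acquires a geometrically constant twist under translation) is a legitimate alternative, and you correctly flag that the real difficulty is producing the isomorphism simultaneously over an \'etale neighborhood of $(\infty,0)$ in $\mathbb P^1\times\mathbb A^1$ rather than fiber by fiber. The paper takes a genuinely different and more elementary path: it passes to the strict henselization $R$ at a geometric point over $(\infty,0)$ and to the finite Galois extension $K/F$ through which the monodromy factors, and then proves directly (\cref{galois-agreement-lemma}, \cref{SummaryHenselianCor}) that the two homomorphisms $\alpha_1, \alpha_2 \colon \mathcal O_F \to R$ coming from $\pi_1$ and $\textup{trans}$, which agree mod $t^2$, induce $G$-equivariantly isomorphic $R$-algebras $R \otimes_{\mathcal O_F,\alpha_i}\mathcal O_K$. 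The proof of that lemma is by Newton polygon estimates (\cref{valuation-lemma}) and Hensel's lemma, and the hypothesis ``slopes $\le 1$'' enters precisely as $s\le 1$, which makes the mod-$t^{1+\lceil s\rceil}$ agreement available from the geometry. This avoids any appeal to the structure theory of slope-$\le 1$ representations, buys the ``simultaneity over the \'etale neighborhood'' for free (the isomorphism is constructed at the level of $R$), and also handles the fractional-slope and higher-rank cases without the case analysis your approach would need.
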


\begin{remark}

It would be interesting to obtain an analog of \cref{TranslationInvarianceOnTheLine} in the realm of ordinary differential equations over $\C$.

\end{remark}

\begin{lem} \label{TensorProdTransInvLine}

Let $\mathcal F$ and $\mathcal G$ be sheaves on $\mathbb A^1$ that are translation-invariant at $x \in \mathbb P^1$. 
Then so is $\mathcal F \otimes \mathcal G$.

\end{lem}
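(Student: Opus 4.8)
The plan is to reduce the statement to two elementary compatibilities: that $\upsilon_!$ is compatible with tensor product in the sense that $\upsilon_!(\mathcal F \otimes \mathcal G) \cong \upsilon_! \mathcal F \otimes \upsilon_! \mathcal G$ as sheaves on $\mathbb P^1$, and that the inverse image functors $\pi_1^*$ and $\textup{trans}^*$ are monoidal (which is automatic). Granting these, translation-invariance of $\mathcal F \otimes \mathcal G$ at $x$ follows by tensoring together the isomorphisms witnessing translation-invariance of $\mathcal F$ and of $\mathcal G$, after passing to a common \'etale neighborhood of $(x,0)$.

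First I would establish $\upsilon_!(\mathcal F \otimes \mathcal G) \cong \upsilon_! \mathcal F \otimes \upsilon_! \mathcal G$. One clean way is the projection formula for the open immersion $\upsilon$: for any sheaf $\mathcal H$ on $\mathbb P^1$ one has $\upsilon_! \mathcal F \otimes \mathcal H \cong \upsilon_!(\mathcal F \otimes \upsilon^* \mathcal H)$, and specializing to $\mathcal H = \upsilon_! \mathcal G$ together with $\upsilon^* \upsilon_! \mathcal G \cong \mathcal G$ gives the claim. Alternatively, one checks directly on stalks that the natural adjunction map $\upsilon_!(\mathcal F \otimes \mathcal G) \to \upsilon_! \mathcal F \otimes \upsilon_! \mathcal G$ is an isomorphism: over $\mathbb A^1$ both sides restrict to $\mathcal F \otimes \mathcal G$, and at $\infty$ both stalks vanish because $(\upsilon_! \mathcal F)_\infty = 0$. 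Since the tensor product of $\overline{\mathbb Q_\ell}$-sheaves is exact, there is no derived-category subtlety here, and this is the only place where it matters that $\upsilon$ is an open immersion.

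Next, as inverse image commutes with tensor product, $\pi_1^* \upsilon_!(\mathcal F \otimes \mathcal G) \cong \pi_1^* \upsilon_! \mathcal F \otimes \pi_1^* \upsilon_! \mathcal G$ and likewise $\textup{trans}^* \upsilon_!(\mathcal F \otimes \mathcal G) \cong \textup{trans}^* \upsilon_! \mathcal F \otimes \textup{trans}^* \upsilon_! \mathcal G$. By hypothesis there are \'etale neighborhoods $U_{\mathcal F}$ and $U_{\mathcal G}$ of $(x,0) \in \mathbb P^1 \times \mathbb A^1$ over which $\pi_1^* \upsilon_! \mathcal F \cong \textup{trans}^* \upsilon_! \mathcal F$ and $\pi_1^* \upsilon_! \mathcal G \cong \textup{trans}^* \upsilon_! \mathcal G$ respectively. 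Taking $U = U_{\mathcal F} \times_{\mathbb P^1 \times \mathbb A^1} U_{\mathcal G}$, which is again an \'etale neighborhood of $(x,0)$ with the evident lifted point, both isomorphisms hold after restriction to $U$; tensoring them yields an isomorphism $\pi_1^* \upsilon_!(\mathcal F \otimes \mathcal G)|_U \cong \textup{trans}^* \upsilon_!(\mathcal F \otimes \mathcal G)|_U$, which is precisely the assertion that $\mathcal F \otimes \mathcal G$ is translation-invariant at $x$. I do not expect any genuine obstacle; the only point requiring a little care is the compatibility of $\upsilon_!$ with $\otimes$, and everything else is formal.
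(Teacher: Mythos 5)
Your proof is correct and takes essentially the same approach as the paper, which says tersely that ``tensor product of sheaves commutes with extension by zero and with pullback''; you have simply spelled out the two compatibilities (the projection-formula or stalkwise argument for $\upsilon_!(\mathcal F\otimes\mathcal G)\cong\upsilon_!\mathcal F\otimes\upsilon_!\mathcal G$, and the monoidality of $\pi_1^*$ and $\textup{trans}^*$) and the routine step of passing to a common \'etale neighborhood.
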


\begin{proof}

We just need to note that tensor product of sheaves commutes with extension by zero and with pullback.
\end{proof}

Another way to prove \cref{TensorProdTransInvLine} is to use \cref{AffineTranslationInvariance} in conjunction with the fact that the tensor product of two sheaves lisse at a certain point is also lisse at that point, and \cref{TranslationInvarianceOnTheLine} in conjunction with \cite[Proposition 2.11, Proof of Corollary 2.12]{SS20}.

\begin{lem} \label{DualTransInvLine}

Let $\mathcal F$ be a sheaf on $\mathbb A^1$ that is translation invariant at some $x \in \mathbb P^1$.
Then so is $\mathcal F^{\vee}$.

\end{lem}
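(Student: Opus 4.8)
The statement to prove is \cref{DualTransInvLine}: if $\mathcal F$ on $\mathbb A^1$ is translation-invariant at $x \in \mathbb P^1$, then so is $\mathcal F^\vee$.

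\medskip

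The plan is to split into the two cases $x \in \mathbb A^1$ and $x = \infty$, exactly mirroring the structure used for \cref{TensorProdTransInvLine}, and to reduce each to facts already available. For $x \in \mathbb A^1$, by \cref{AffineTranslationInvariance} translation-invariance at $x$ is equivalent to $\mathcal F$ being lisse at $x$; so I would invoke the elementary fact that the (linear-algebra, or pointwise) dual of a sheaf lisse at a point is again lisse at that point — taking duals commutes with restriction to an étale neighborhood and the dual of a constant sheaf is constant — and then apply \cref{AffineTranslationInvariance} in the reverse direction to conclude $\mathcal F^\vee$ is translation-invariant at $x$. For $x = \infty$, by \cref{TranslationInvarianceOnTheLine} translation-invariance is equivalent to all slopes of the local monodromy at $\infty$ being $\le 1$; and the local monodromy representation of $\mathcal F^\vee$ at $\infty$ is the dual (contragredient) of that of $\mathcal F$, which has the same slopes (Swan conductor and breaks are preserved under passing to the dual representation — this is a standard fact, e.g. in \cite[Chapter 1]{Katz88}). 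Hence the slopes of $\mathcal F^\vee$ at $\infty$ are also $\le 1$, and \cref{TranslationInvarianceOnTheLine} gives the claim.

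\medskip

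Alternatively, one could give a uniform proof that avoids the case split, paralleling the one-line argument given for \cref{TensorProdTransInvLine}: the only inputs needed are that forming the dual commutes with extension by zero $\upsilon_!$ and with pullback along $\pi_1$ and $\textup{trans}$. Since $\mathcal F$ is lisse — hence a local system, so a ``nice'' object whose formation of $\upsilon_!$ and dual interact well — on a dense open, and $\upsilon_! \mathcal F^\vee$ agrees with $(\upsilon_! \mathcal F)^\vee$ away from the finitely many points where $\mathcal F$ fails to be lisse (and near $\infty$ after suitable étale localization the two are visibly isomorphic), an isomorphism $\pi_1^* \upsilon_! \mathcal F \cong \textup{trans}^* \upsilon_! \mathcal F$ over an étale neighborhood of $(x,0)$ dualizes to an isomorphism $\pi_1^* (\upsilon_! \mathcal F)^\vee \cong \textup{trans}^* (\upsilon_! \mathcal F)^\vee$, which one then identifies with $\pi_1^* \upsilon_! \mathcal F^\vee$ and $\textup{trans}^* \upsilon_! \mathcal F^\vee$ respectively. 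I would actually present the clean case-split argument as the main proof, since it rests squarely on the two results \cref{AffineTranslationInvariance} and \cref{TranslationInvarianceOnTheLine} just proved, and mention the uniform argument as a remark.

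\medskip

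The one genuine subtlety — the step I expect to be the main obstacle — is the compatibility of $\upsilon_!$ with duality: $(\upsilon_! \mathcal F)^\vee$ is \emph{not} in general $\upsilon_! (\mathcal F^\vee)$, since $\upsilon_!$ does not commute with the internal Hom used to define $\vee$ (the stalk at $\infty$ differs). This is why I prefer to route the argument through \cref{AffineTranslationInvariance} and \cref{TranslationInvarianceOnTheLine} rather than manipulate $\upsilon_! \mathcal F$ directly: in the first case ``lisse'' is a condition on $\mathcal F$ itself (no $\upsilon_!$ needed), and in the second ``slopes $\le 1$'' is a condition on the local monodromy at $\infty$ (again intrinsic to $\mathcal F$ near $\infty$, not involving the extension by zero). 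Both of these conditions are transparently stable under $\mathcal F \mapsto \mathcal F^\vee$, so the duality-vs-$\upsilon_!$ incompatibility never has to be confronted. The remaining points — duality preserves lisseness at a point, and duality of a local Galois representation preserves breaks — are standard and I would cite them rather than prove them.
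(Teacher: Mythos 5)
Your preferred (case-split) proof is correct and is precisely what the paper offers as its \emph{alternative} proof of \cref{DualTransInvLine}; the paper's \emph{primary} proof is the one-line ``uniform'' argument you relegate to a remark and then criticize.

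Your concern about the one-liner is a reasonable thing to flag but is not actually fatal. You are right that $(\upsilon_!\mathcal F)^\vee$ is not literally $\upsilon_!(\mathcal F^\vee)$ — adjunction gives $(\upsilon_!\mathcal F)^\vee \cong \upsilon_*(\mathcal F^\vee)$, which differs at $\infty$. But the paper's argument does not need to form $(\upsilon_!\mathcal F)^\vee$. Both $\pi_1$ and $\textup{trans}$ send $\{\infty\}\times\mathbb A^1$ to $\infty$ and restrict to maps $\mathbb A^1\times\mathbb A^1\to\mathbb A^1$, so (by the elementary base change for open immersions) $\pi_1^*\upsilon_!\mathcal F$ and $\textup{trans}^*\upsilon_!\mathcal F$ are themselves extensions by zero of $(\pi_1|_{\mathbb A^1\times\mathbb A^1})^*\mathcal F$ and $(\textup{trans}|_{\mathbb A^1\times\mathbb A^1})^*\mathcal F$ across the divisor $\{\infty\}\times\mathbb A^1$. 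Consequently the isomorphism witnessing translation-invariance is the $j_!$ of its restriction to the open part $V = U\cap(\mathbb A^1\times\mathbb A^1)$, where duality and pullback \emph{do} commute; dualizing there and re-applying $j_!$ produces the needed isomorphism for $\mathcal F^\vee$. That is the content of ``duality commutes with extension by zero and pullback'' in this setting. Your case-split route sidesteps the point entirely — lisseness at a finite point and the slope bound at $\infty$ are both visibly self-dual conditions — which is why the paper lists it as a clean alternative; but the one-liner is sound once one notes that the isomorphism lives in the image of $j_!$ on both sides.
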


\begin{proof}

We just need to note that duality of sheaves commutes with extension by zero and with pullback.
\end{proof}

An alternative way to prove \cref{DualTransInvLine} is to use \cref{AffineTranslationInvariance} in conjunction with the fact that the dual of a sheaf lisse at a certain point is also lisse at that point, and \cref{TranslationInvarianceOnTheLine} in conjunction with the fact that the slopes of a representation coincide with the slopes of its dual.

\begin{lem} \label{NegationTransInvLine}

Let $N \colon \mathbb A^1 \to \mathbb A^1$ be the morphism mapping $x$ to $-x$, and let $\mathcal F$ be a sheaf on $\mathbb A^1$ that is translation-invariant at $\infty$.
Then $N^* \mathcal F$ is also translation-invariant at $\infty$.

\end{lem}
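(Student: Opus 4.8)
The plan is to give a direct proof from the definition, in the spirit of the proofs of \lemref{TensorProdTransInvLine} and \lemref{DualTransInvLine}: I will produce a self-map of $\mathbb P^1 \times \mathbb A^1$ fixing $(\infty,0)$ that intertwines both $\pi_1$ and $\textup{trans}$ with the negation map, and then simply pull back the given isomorphism along it.

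First I would extend $N$ to $\overline N \colon \mathbb P^1 \to \mathbb P^1$, $(a_0 : a_1) \mapsto (a_0 : -a_1)$, which is an automorphism fixing $\infty = (0:1)$ and restricting to $N$ on $\mathbb A^1$; since pullback along an automorphism commutes with extension by zero, $\upsilon_! N^* \mathcal F \cong \overline N^* \upsilon_! \mathcal F$. Next I would record the two compatibilities of the automorphism $\overline N \times N$ of $\mathbb P^1 \times \mathbb A^1$: the identity $\pi_1 \circ (\overline N \times N) = \overline N \circ \pi_1$ is immediate, while
\[
\textup{trans}\big( (a_0 : -a_1),\, -b \big) = (a_0 : -a_1 - b a_0) = \overline N\big( (a_0 : a_1 + b a_0) \big) = \overline N\big( \textup{trans}((a_0:a_1), b) \big)
\]
shows $\textup{trans} \circ (\overline N \times N) = \overline N \circ \textup{trans}$.

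Then I would take the isomorphism $\pi_1^* \upsilon_! \mathcal F \cong \textup{trans}^* \upsilon_! \mathcal F$ witnessing translation-invariance of $\mathcal F$ at $\infty$, which exists after pullback to some \'etale neighborhood $U$ of $(\infty, 0)$, and pull it back along $\overline N \times N$. Using the two compatibilities together with $\upsilon_! N^* \mathcal F \cong \overline N^* \upsilon_! \mathcal F$, the result is an isomorphism $\pi_1^* \upsilon_! N^* \mathcal F \cong \textup{trans}^* \upsilon_! N^* \mathcal F$ after pullback to $(\overline N \times N)^{-1}(U)$, which is an \'etale neighborhood of $(\overline N \times N)^{-1}(\infty, 0) = (\infty, 0)$. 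This is precisely the assertion that $N^* \mathcal F$ is translation-invariant at $\infty$.

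The only point requiring any care is the intertwining identity $\textup{trans} \circ (\overline N \times N) = \overline N \circ \textup{trans}$ — that negating both the point and the translation parameter agrees with negating the translated point — after which the argument is purely formal, so I do not expect a genuine obstacle. For completeness I would also note the alternative route through \thmref{TranslationInvarianceOnTheLine}: in the uniformizer $u^{-1}$ at $\infty$ the map $N$ acts by $u^{-1} \mapsto -u^{-1}$, a tame (valuation-preserving) automorphism of $\overline{\F_q}((u^{-1}))$, which preserves the slopes of the local monodromy representation; hence the slopes of $N^* \mathcal F$ at $\infty$ coincide with those of $\mathcal F$, all $\le 1$, and \thmref{TranslationInvarianceOnTheLine} gives the claim.
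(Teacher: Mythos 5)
Your proof is correct and fleshes out exactly the paper's argument: the paper's proof says in one line that $N$ extends to an automorphism of $\mathbb P^1$, intertwines the translation maps, and commutes with $\upsilon_!$, which is precisely what you verify (and your computation of $\textup{trans}\circ(\overline N\times N)=\overline N\circ\textup{trans}$ is right). The alternative route you mention via \thmref{TranslationInvarianceOnTheLine} and slope-preservation is also the same alternative the paper records in the remark immediately following the lemma.
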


\begin{proof}

Follows immediately from the fact that $N$ extends to an automorphism of $\mathbb P^1$, intertwines translation maps, and commutes with $\upsilon_!$.
\end{proof}

Another way to prove \cref{NegationTransInvLine} is to use \cref{TranslationInvarianceOnTheLine} in conjunction with the fact that $N$ maps a uniformizer at $\infty$ to another such uniformizer, and thus preserves the slopes of any representation.

\begin{lem} \label{FourierTransformTranslationInvariantLine}

Let $\mathcal F$ be a sheaf on $\mathbb A^1$ having no finitely supported sections and neither constant nor Artin--Schreier factors in its global monodromy.
Then $\mathcal F$ is translation-invariant at $\infty$ if and only if its Fourier transform is.

\end{lem}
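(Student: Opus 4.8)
The plan is to convert the statement, via \cref{TranslationInvarianceOnTheLine}, into an assertion about slopes at $\infty$, and then to read off those slopes from Laumon's stationary phase decomposition of the Fourier transform. First I would note that the running hypotheses on $\mathcal F$ (no finitely supported sections, and no constant or Artin--Schreier constituent of the geometric global monodromy) serve only to guarantee, in the usual way, that the $\ell$-adic Fourier transform $\widehat{\mathcal F} = \mathrm{FT}_\psi(\mathcal F)$ is again a sheaf on $\mathbb A^1$ (a Fourier sheaf), placed in the right degree, so that \cref{TranslationInvarianceOnTheLine} applies to $\widehat{\mathcal F}$ as well as to $\mathcal F$. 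Granting this, it suffices to prove the purely local equivalence that all slopes of the local monodromy of $\mathcal F$ at $\infty$ are $\le 1$ if and only if all slopes of $\widehat{\mathcal F}$ at $\infty$ are $\le 1$; combined with \cref{TranslationInvarianceOnTheLine} for $\mathcal F$ and for $\widehat{\mathcal F}$, this gives the lemma, in both directions at once.

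For the slope equivalence I would reduce to the case where $\mathcal F$ is the middle extension of a geometrically irreducible lisse sheaf. Modifying $\mathcal F$ by a finitely supported subsheaf or quotient --- in particular replacing $\mathcal F$ by $\upsilon^*\upsilon_!$ applied to it, or by its middle extension $j_{!*}j^*\mathcal F$ --- changes the slopes of $\mathcal F$ at $\infty$ not at all, and changes $\widehat{\mathcal F}$ only by Fourier transforms of skyscraper sheaves, which are Artin--Schreier sheaves $\mathcal L_{\psi(ay)}$ or constant sheaves, all of whose slopes at $\infty$ are $\le 1$; hence the property ``all slopes at $\infty$ are $\le 1$'' is insensitive to such modifications, for both $\mathcal F$ and $\widehat{\mathcal F}$. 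Then, decomposing the perverse sheaf $\mathcal F[1]$ into its Jordan--H\"older constituents, using that $\mathrm{FT}_\psi$ is exact for the perverse $t$-structure, and using that the break decomposition of a representation of the inertia group at $\infty$ is additive on short exact sequences, I may assume $\mathcal F = j_{!*}\mathcal G$ for a geometrically irreducible lisse $\mathcal G$ on a dense open $j\colon U\hookrightarrow\mathbb A^1$, which by hypothesis is neither constant nor an Artin--Schreier sheaf.

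Finally I would invoke Laumon's stationary phase: the local monodromy of $\widehat{\mathcal F}$ at $\infty$ is the direct sum of the contributions $\mathcal L_{\psi(sy)}\otimes \mathrm{FT}_{\mathrm{loc}}(0,\infty)\big(\mathcal F(s)/\mathcal F(s)^{I_s}\big)$, one for each finite singularity $s$ of $\mathcal F$, together with $\mathrm{FT}_{\mathrm{loc}}(\infty,\infty)\big(\mathcal F(\infty)^{>1}\big)$, where $\mathcal F(\infty)^{>1}$ denotes the part of the local monodromy at $\infty$ with all slopes $>1$. The numerical input I need (due to Laumon; see also Katz, \emph{Exponential Sums and Differential Equations}, Chapter~7) is that $\mathrm{FT}_{\mathrm{loc}}(0,\infty)$ carries a representation of pure slope $s\ge 0$ to one of pure slope $s/(s+1)<1$, whereas $\mathrm{FT}_{\mathrm{loc}}(\infty,\infty)$ restricts to an equivalence on representations all of whose slopes are $>1$ (a pure slope $s>1$ being sent to $s/(s-1)>1$), so that $\mathrm{FT}_{\mathrm{loc}}(\infty,\infty)(V)=0$ exactly when $V=0$. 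Since tensoring a representation all of whose slopes are $<1$ by $\mathcal L_{\psi(sy)}$, which has slope $\le 1$, yields slopes $\le 1$, the contributions from the finite singularities all have slopes $\le 1$; therefore $\widehat{\mathcal F}$ has a slope $>1$ at $\infty$ if and only if $\mathrm{FT}_{\mathrm{loc}}(\infty,\infty)(\mathcal F(\infty)^{>1})\ne 0$, i.e. if and only if $\mathcal F(\infty)^{>1}\ne 0$, i.e. if and only if $\mathcal F$ itself has a slope $>1$ at $\infty$. This is exactly the slope equivalence, and the proof is complete. The main obstacle is precisely this last step: invoking stationary phase and the slope formulas correctly, including the bookkeeping for the slope-exactly-$1$ part of $\mathcal F$ at $\infty$ (which, after the Artin--Schreier untwisting implicit in stationary phase, migrates to the finite singularities of $\widehat{\mathcal F}$ and so never contributes slope $>1$ at $\infty$) and care with the normalization of $\mathrm{FT}_\psi$ so that it lands in sheaves and matches the conventions of \cref{TranslationInvarianceOnTheLine}; the reductions above are formal.
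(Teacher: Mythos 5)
Your proposal is correct and follows essentially the same route as the paper: translate translation-invariance at $\infty$ into the condition that all slopes at $\infty$ are $\le 1$ via \cref{TranslationInvarianceOnTheLine}, and then control the slopes of the Fourier transform via Laumon's slope formulas for the local Fourier transforms $\mathrm{FT}_{\mathrm{loc}}(0,\infty)$ and $\mathrm{FT}_{\mathrm{loc}}(\infty,\infty)$, which is precisely what the paper cites as \cite[Proposition 2.4.3 (i)(b), (iii)(b)]{Lau}. The only procedural difference is that the paper invokes involutivity of the Fourier transform to reduce to a single implication (so it never needs the equivalence property of $\mathrm{FT}_{\mathrm{loc}}(\infty,\infty)$, nor your reduction to irreducible middle extensions), whereas you argue both directions simultaneously by spelling out the stationary phase decomposition.
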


\begin{proof}

In view of \cite[Theorem 2 (1)]{Trav} and our assumptions on $\mathcal F$, the Fourier transform of $\mathcal F$ is a sheaf on $\mathbb A^1$.
By the involutivity of the Fourier transform, it suffices to prove one direction.
This follows from \cref{TranslationInvarianceOnTheLine} and \cite[Proposition 2.4.3 (i)(b), (iii)(b)]{Lau}.
\end{proof}

Let $\mathcal C$ be the category of sheaves on $\mathbb A^1$ that are lisse on $\mathbb G_m = \mathbb A^1 \setminus \{0\}$, tamely ramified at $0$ with vanishing stalk, and totally wild at infinity.
\cite{Katz88} endows $\mathcal C$ with an operation of convolution by pulling back to $\mathbb G_m$, convolving, and extending by zero to $\mathbb A^1$.

\begin{lem}

The subcategory of those sheaves in $\mathcal C$ that are translation-invariant at $\infty$ is preserved under convolution. 

\end{lem}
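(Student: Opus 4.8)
The plan is to reduce the assertion to a slope estimate at $\infty$, and then to extract that estimate from the local description of convolution. Every object of $\mathcal C$ is, by definition, totally wild at $\infty$, so all of its slopes at $\infty$ are strictly positive; hence, by \cref{TranslationInvarianceOnTheLine}, such an object is translation-invariant at $\infty$ precisely when all of those slopes are $\le 1$. Given $\mathcal F,\mathcal G\in\mathcal C$ that are translation-invariant at $\infty$, we already know from \cite{Katz88} that $\mathcal F*\mathcal G$ again lies in $\mathcal C$, so it is automatically totally wild at $\infty$; it therefore remains only to prove that every slope of $\mathcal F*\mathcal G$ at $\infty$ is $\le 1$.

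To this end I would describe the convolution geometrically. In the coordinates $(x,t)$ on $\mathbb G_m\times\mathbb G_m$ with $t=xy$, the multiplication map becomes the projection $p\colon(x,t)\mapsto t$, and $\mathcal F*\mathcal G$ is obtained by extending by zero across $0$ the complex $Rp_!\,\mathcal M$, where $\mathcal M=\mathrm{pr}_x^*\mathcal F\otimes\delta^*\mathcal G$ with $\mathrm{pr}_x(x,t)=x$ and $\delta(x,t)=t/x$. For $t$ in a punctured neighbourhood of $\infty$, the restriction of $\mathcal M$ to the $x$-line is lisse on $\mathbb G_m$; it is tame at the points inherited from the tame loci of $\mathcal F$ and of $\mathcal G$, while at $x=0$ it is the local monodromy of $\mathcal G$ at $\infty$ (transported along the germ $x\mapsto x/t$, which preserves a uniformiser for $t$ fixed in $\mathbb G_m$), tensored by the tame local monodromy of $\mathcal F$ at $0$, and symmetrically at $x=\infty$. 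Since tensoring with a tame sheaf does not increase slopes (cf. the remarks following \cref{AffineTranslationInvariance}), all slopes of $\mathcal M$ along the fibres of $p$ are $\le 1$.

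The crux — and what I expect to be the main obstacle — is to deduce from this a bound on the slopes of $Rp_!\mathcal M$ at $t=\infty$: the subtlety is that near the corners $(x,t)=(0,\infty)$ and $(\infty,\infty)$ the rational map $\delta=t/x$ is indeterminate, so the two-variable behaviour of $\mathcal M$ is not governed by the fibral computation alone, and one genuinely has to track the degeneration. I would resolve this by invoking the analysis of the local monodromy at $\infty$ of a multiplicative $!$-convolution carried out in \cite{Katz88}, which rests on Laumon's local Fourier transforms \cite{Lau}: after a suitable compactification of $p$, the local monodromy of $\mathcal F*\mathcal G$ at $\infty$ is assembled from the local monodromies of $\mathcal M$ along the boundary strata, and every piece so produced arises from a representation of slope $\le 1$ by a local Fourier transform operating on the slope-$\le 1$ range (of the $\mathrm{FT}(0,\infty)$ type, or the slope-$\le 1$ part of the transforms at $\infty$), none of which yields a slope exceeding $1$. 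Consequently all slopes of $\mathcal F*\mathcal G$ at $\infty$ are $\le 1$, which by the first paragraph means $\mathcal F*\mathcal G$ is translation-invariant at $\infty$, as required. Alternatively, one could simply quote whichever form of this slope bound for convolutions is recorded in \cite{Katz88} or \cite{SS20}, bypassing the local-transform bookkeeping.
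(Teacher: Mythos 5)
Your proposal takes the same route as the paper: reduce to a slope bound at $\infty$ via \cref{TranslationInvarianceOnTheLine}, then invoke the behaviour of slopes at $\infty$ under multiplicative $!$-convolution from the literature — the paper's proof is precisely this reduction plus citations of \cite[Section 7.6]{Katz88} and \cite[Proposition 2]{RL}, which formalize the corner analysis you sketch. Your geometric picture of the degeneration near $(0,\infty)$ and $(\infty,\infty)$, and your identification that the fibral slope bound alone is not enough, correctly pinpoint what those cited local-convolution results supply, so the argument is sound and matches the paper's.
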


\begin{proof}

Follows from \cref{TranslationInvarianceOnTheLine}, \cite[Section 7.6]{Katz88}, and \cite[Proposition 2]{RL}.
\end{proof}

\begin{cor}

Let $\mathcal F$ and $\mathcal G$ be sheaves on $\mathbb A^1$ that are translation-invariant at $\infty$, have no finitely supported sections, and have neither constant nor Artin--Schreier factors in their global monodromy.
Then their additive convolution $\mathcal F * \mathcal G$ is a sheaf on $\mathbb A^1$ that is translation-invariant at $\infty$. 

\end{cor}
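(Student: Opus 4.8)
The plan is to go through the $\ell$-adic Fourier transform, reducing everything to the lemmas on the line established above. Write $\mathrm{FT}$ for the Fourier--Deligne transform on $\mathbb A^1$ attached to $\psi$, and $N \colon \mathbb A^1 \to \mathbb A^1$ for $x \mapsto -x$. Two classical facts drive the argument: $\mathrm{FT}$ exchanges additive convolution with tensor product, $\mathrm{FT}(\mathcal F * \mathcal G) \cong \mathrm{FT}(\mathcal F) \otimes \mathrm{FT}(\mathcal G)$ up to a shift and a Tate twist (Katz--Laumon), and $\mathrm{FT} \circ \mathrm{FT} \cong N^*$ up to a shift and a Tate twist. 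Combining them gives
\[
\mathcal F * \mathcal G \;\cong\; N^* \, \mathrm{FT}\bigl( \mathrm{FT}(\mathcal F) \otimes \mathrm{FT}(\mathcal G) \bigr)
\]
up to a shift and a Tate twist. Tate twists, the shift placing a complex in its correct perverse degree, and $N^*$ all preserve the property of being a single-degree sheaf on $\mathbb A^1$, and by \lemref{NegationTransInvLine} the operation $N^*$ preserves translation-invariance at $\infty$; so it is enough to show that $\mathrm{FT}\bigl( \mathrm{FT}(\mathcal F) \otimes \mathrm{FT}(\mathcal G) \bigr)$ is a sheaf on $\mathbb A^1$ translation-invariant at $\infty$.

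First I would invoke \cite[Theorem 2 (1)]{Trav}, which under our hypotheses shows that $\mathrm{FT}(\mathcal F)$ and $\mathrm{FT}(\mathcal G)$ are sheaves on $\mathbb A^1$, and \lemref{FourierTransformTranslationInvariantLine}, which shows they are translation-invariant at $\infty$. Then \lemref{TensorProdTransInvLine} gives that $\mathcal H \defeq \mathrm{FT}(\mathcal F) \otimes \mathrm{FT}(\mathcal G)$ is a sheaf on $\mathbb A^1$ translation-invariant at $\infty$. To conclude by a second application of \lemref{FourierTransformTranslationInvariantLine}, this time to $\mathcal H$ (or to a suitable subquotient of it), I must check that the relevant sheaf has no finitely supported sections and has neither constant nor Artin--Schreier factors in its global monodromy; granted that, \lemref{FourierTransformTranslationInvariantLine} together with \cite[Theorem 2 (1)]{Trav} gives that its Fourier transform is a sheaf on $\mathbb A^1$ translation-invariant at $\infty$, which completes the argument.

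The verification of those hypotheses is the main obstacle, and it is bound up with the precise normalization of the convolution. The absence of finitely supported sections in $\mathcal H$ follows because $\mathcal F[1]$ and $\mathcal G[1]$ are perverse, hence so are their Fourier transforms, so $\mathrm{FT}(\mathcal F)$ and $\mathrm{FT}(\mathcal G)$ have no punctual sections, and on a dense open $U$ where both are lisse the sheaf $\mathcal H$ embeds into the middle extension of $\mathrm{FT}(\mathcal F)|_U \otimes \mathrm{FT}(\mathcal G)|_U$, which has none. The absence of constant and Artin--Schreier factors, however, need not hold for $\mathcal H$ in this generality, since $\mathrm{FT}$ trades such factors for delta sheaves and $\mathrm{FT}(\mathcal F)$ and $\mathrm{FT}(\mathcal G)$ may share geometric constituents (for instance when $\mathrm{FT}(\mathcal F)$ is self-dual); so one must interpret $\mathcal F * \mathcal G$ as the sheaf-theoretic (middle) convolution, equivalently replace $\mathcal H$ by the subquotient $\mathcal H'$ with no constant or Artin--Schreier constituents. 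Here the key point is that, by \lemref{TranslationInvarianceOnTheLine}, translation-invariance at $\infty$ is exactly the condition that all slopes at $\infty$ are at most $1$, and the slopes of any subquotient are among those of $\mathcal H$; hence $\mathcal H'$ is again translation-invariant at $\infty$, and it also retains the no-punctual-sections property (subquotients in the perverse category of a shifted sheaf with no punctual sections are again such), so \lemref{FourierTransformTranslationInvariantLine} applies to $\mathcal H'$ and finishes the proof. The remaining work — pinning down the normalization of $*$ and checking that the discarded constituents contribute only punctual sheaves that do not disturb translation-invariance of the sheaf part — is bookkeeping; all the genuinely geometric content is already carried by \lemref{TranslationInvarianceOnTheLine} and \lemref{FourierTransformTranslationInvariantLine}.
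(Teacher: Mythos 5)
Your proposal follows the same Fourier-transform route as the paper's one-line proof: invoke Laumon's relation $\mathrm{FT}(\mathcal F * \mathcal G) \cong \mathrm{FT}(\mathcal F) \otimes \mathrm{FT}(\mathcal G)$ (up to shift/twist), transfer translation-invariance from $\mathcal F, \mathcal G$ to their Fourier transforms via \lemref{FourierTransformTranslationInvariantLine}, pass through the tensor product, and return by the involutivity of $\mathrm{FT}$. The paper's proof cites exactly Laumon's Proposition 1.2.2.7, \lemref{FourierTransformTranslationInvariantLine}, \lemref{NegationTransInvLine}, \lemref{DualTransInvLine}, and the preservation of the hypotheses under negation and duality; the presence of \lemref{DualTransInvLine} reflects a slightly different bookkeeping of the $\mathrm{FT}$-involution (one that reaches $\mathcal F * \mathcal G$ through $N^*$ and a Verdier dual rather than through $\mathrm{FT} \circ \mathrm{FT} \cong N^*$ directly), but the substance matches yours.

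You correctly flag the hypothesis-verification issue: a second application of \lemref{FourierTransformTranslationInvariantLine} requires $\mathrm{FT}(\mathcal F) \otimes \mathrm{FT}(\mathcal G)$ to have no constant or Artin--Schreier factors, and this can genuinely fail (your self-duality example is the right one). When it fails, $Rs_!(\mathcal F \boxtimes \mathcal G)$ acquires a punctual contribution in degree $2$, so the ``additive convolution'' is not concentrated in a single degree and the stated conclusion (``is a sheaf'') already needs interpretation. The paper's proof records only that negation and duality preserve the hypotheses, not that tensor product does, so it does not visibly close this gap; your suggestion to read $*$ as the middle convolution (discarding the constant/Artin--Schreier constituents, which preserves the slope $\leq 1$ condition at $\infty$ since slopes pass to subquotients) is a sensible repair, and is consistent with the Katz-style convolution used in the lemma preceding the corollary.

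One small slip: you justify ``$\mathrm{FT}(\mathcal F)$ and $\mathrm{FT}(\mathcal G)$ have no punctual sections'' by asserting that $\mathcal F[1]$ and $\mathcal G[1]$ are perverse, but the stated hypothesis (no finitely supported sections) only gives the absence of punctual subsheaves, not the absence of punctual quotients, so perversity of $\mathcal F[1]$ is not automatic. This is easily fixed: the needed facts that $\mathrm{FT}(\mathcal F)$ and $\mathrm{FT}(\mathcal G)$ are sheaves with no punctual sections and no constant/Artin--Schreier factors all come directly from \cite[Theorem 2(1)]{Trav} applied to $\mathcal F$ and $\mathcal G$, with no detour through perversity.
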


\begin{proof}

This follows from \cite[Proposition 1.2.2.7]{Lau}, \cref{FourierTransformTranslationInvariantLine}, \cref{NegationTransInvLine}, \cref{DualTransInvLine}, and the fact that negation and duality preserve the lack of finitely supported sections and Artin--Schreier factors in the global monodromy.
\end{proof}
%By \cite[Proposition 1.2.2.7]{Lau}, the Fourier transform of $\mathcal{F} * N^*\mathcal{G}^\vee$ is the tensor product of the Fourier transform of $\mathcal{F}$ and the Fourier transform of $\mathcal{G}$.

Our next goal is to prove \cref{TranslationInvarianceOnTheLine}.

Let $F$ be the field of fractions of a Henselian discrete valuation ring $\mathcal{O}_F$ whose residue field is perfect and has prime characteristic $p$.
Let $K/F$ be a finite totally ramified Galois extension with Galois group $G$. The field $K$ is Henselian as well, and we denote its valuation ring by $\mathcal{O}_K$.
Let
\[
e = \abs{G} = [K : F]
\]
and let $l \geq -1$ be the largest integer for which the $l$th ramification group $G_l$ in the lower numbering filtration is nontrivial.
For a real number $z \geq -1$ we set $G_z = G_{\lceil z \rceil}$. 
Our assumption that $K/F$ is totally ramified means that $G_0 = G$.

We also consider the ramification groups in the upper numbering filtration characterized by the property $G^r = G_i$ where
\[ r = \int_{0}^i  \frac{ \abs{G_z}}{\abs{G_0}} dz \geq -1 \] 
so it makes sense to define
\begin{equation*} 
s = \max \ \{r \geq -1 : G^r \neq 1\}.
\end{equation*}
We recall that the Artin conductor of a finite-dimensional complex representation $V$ of $G$ is given by
\[ c_V = \sum_{i=0}^{l} \frac{ \abs{G_i}}{ \abs{G_0}}  ( \dim_{\mathbb C} V - \dim_{\mathbb{C}} V^{G_i} ) \]
as in \cite[p. 100, Cor. 1']{Ser}.

The following is a variant of \cite[p. 103, Ex. 2]{Ser}.

\begin{lem} \label{pre-valuation-lemma}

For the regular representation $\operatorname{Reg}$ of $G$ we have 
\[
c_{\operatorname{Reg}} = \sum_{i=0}^{l} ( \abs{G_i} - 1)
\]
and $e(s + 1) -1   = c_{\operatorname{Reg}} +l.$
In particular, $s \in \mathbb{Q}$ and $es \in \mathbb{Z}$.

\end{lem}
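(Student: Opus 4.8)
The plan is to evaluate both identities directly from the definitions recalled above, using only two elementary inputs: the dimension of the invariants of the regular representation under a subgroup, and the fact that $z \mapsto |G_z|$ is a step function, constant on each interval $(i-1,i]$.

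First I would treat the conductor formula. For any subgroup $H \le G$ one has $\dim_{\mathbb C} \operatorname{Reg}^H = [G:H] = e/|H|$; indeed the restriction of $\operatorname{Reg}$ to $H$ is $[G:H]$ copies of the regular representation of $H$, whose space of invariants is one-dimensional (equivalently $\dim \operatorname{Reg}^H = \frac{1}{|H|}\sum_{h\in H}\chi_{\operatorname{Reg}}(h) = |G|/|H|$). Taking $H = G_i$ gives $\dim_{\mathbb C}\operatorname{Reg} - \dim_{\mathbb C}\operatorname{Reg}^{G_i} = e - e/|G_i|$, so substituting into the displayed formula for $c_V$ and using $|G_0| = e$ (total ramification) yields
\[
c_{\operatorname{Reg}} \;=\; \sum_{i=0}^{l} \frac{|G_i|}{e}\left( e - \frac{e}{|G_i|}\right) \;=\; \sum_{i=0}^{l}\bigl(|G_i| - 1\bigr),
\]
which is the first claim.

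Next I would identify $s$ with the value at $i=l$ of the Herbrand-type function $i \mapsto r = \int_0^i \frac{|G_z|}{|G_0|}\,dz$. This function is continuous and strictly increasing in the real variable $i \ge -1$ (its integrand is $\ge 1/|G_0| > 0$), and by construction $G^r = G_i$ when $r$ equals its value at an integer $i$; since $l$ is the largest integer with $G_l \neq 1$, monotonicity forces $G^r \neq 1$ precisely for $r \le \int_0^l \frac{|G_z|}{|G_0|}\,dz$, i.e. $s = \int_0^l \frac{|G_z|}{|G_0|}\,dz$. Because $|G_z| = |G_{\lceil z\rceil}|$ equals $|G_i|$ throughout $(i-1,i]$, this integral breaks into a finite sum, so that
\[
es \;=\; \sum_{i=1}^{l} |G_i|.
\]
Hence $e(s+1) - 1 = e + \sum_{i=1}^{l}|G_i| - 1 = \sum_{i=0}^{l}|G_i| - 1 = \sum_{i=0}^{l}(|G_i| - 1) + l = c_{\operatorname{Reg}} + l$, which is the second identity; and the final assertions are then immediate, since $es = \sum_{i=1}^{l}|G_i| \in \mathbb Z$ forces $s \in \frac1e\mathbb Z \subset \mathbb Q$.

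The one point requiring care — and the only place the argument is not bookkeeping — is the identification $s = \int_0^l \frac{|G_z|}{|G_0|}\,dz$, where one must match conventions so that the upper-numbering groups become trivial exactly as $r$ crosses the Herbrand image of $l$; this is precisely where the stated relation $G^r = G_i$ together with the monotonicity of $r(\cdot)$ is used. Everything else reduces to the two elementary inputs above, and the result is the promised variant of \cite[p.~103, Ex.~2]{Ser}.
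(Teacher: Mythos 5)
Your proposal is correct and follows essentially the same route as the paper: compute $c_{\operatorname{Reg}}$ from $\dim\operatorname{Reg}^{G_i}=[G:G_i]$ (the paper phrases this via Mackey decomposition and Frobenius reciprocity; you phrase it as $\operatorname{Reg}|_H\cong[G{:}H]\cdot\operatorname{Reg}_H$, which is the same fact), then identify $s=\int_0^l\frac{|G_z|}{|G_0|}\,dz$, evaluate the integral as $\frac1e\sum_{i=1}^l|G_i|$ using the step-function convention $G_z=G_{\lceil z\rceil}$, and finish by bookkeeping. Your added remarks on the monotonicity of the Herbrand-type function just make explicit the justification the paper compresses into ``because of the maximality of $s$ and of $l$.''
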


\begin{proof}

On the one hand, by applying the Mackey decomposition and Frobenius reciprocity to the restriction of $\operatorname{Reg}$ to $G_i$ we get that
\[c_{\operatorname{Reg}}  = \sum_{i=0}^{l} \frac{ \abs{G_i}}{ \abs{G}} \left( \abs{G} - \frac{\abs{G} }{ \abs{G_i}} \right) = \sum_{i=0}^{l} ( \abs{G_i} - 1) .\] 

On the other hand, because of the maximality of $s$ and of $l$, we must have
\[ s=  \int_{0}^l  \frac{ \abs{G_z}}{\abs{G_0}} dz \] 
and $l$ is an integer so
\[s =  \sum_{i=1}^{l} \int_{i-1}^i  \frac{ \abs{G_z}}{\abs{G_0}} dz = \sum_{i=1}^{l}  \frac{ \abs{G_i}}{\abs{G_0}} =  \frac{1}{e} \sum_{i=1}^{l} \abs{G_i}. \]
We therefore get
\[ es + e -1 = e-1+ \sum_{i=1}^{l} \abs{G_i} = -1 + \sum_{i=0}^{l} \abs{G_i}  = l + \sum_{i=0}^{l} ( \abs{G_i} -1) = l + c_{\operatorname{Reg}} .\]
\end{proof}

The tame quotient $T = G/G_1$ is a finite abelian group of order not divisible by $p$.
We denote this order by $\tau$.

\begin{lem} \label{CongruenceArtinConductor}

We have
$
c_{\operatorname{Reg}} \equiv -1 \mod \tau.
$

\end{lem}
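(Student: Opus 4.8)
The plan is to extract the congruence $c_{\operatorname{Reg}} \equiv -1 \pmod{\tau}$ directly from the formula $c_{\operatorname{Reg}} = \sum_{i=0}^{l}(|G_i| - 1)$ established in \lemref{pre-valuation-lemma}, by analyzing the sum modulo $\tau$ term by term. First I would separate the $i=0$ contribution: since $K/F$ is totally ramified, $G_0 = G$ has order $e$, and $e = \tau \cdot |G_1|$ because $T = G/G_1$ has order $\tau$ and $G_1$ is the $p$-Sylow subgroup of $G_0$ (the wild inertia). So $|G_0| - 1 = e - 1 \equiv -1 \pmod{\tau}$, which already produces the desired residue; the task reduces to showing that $\sum_{i=1}^{l}(|G_i| - 1) \equiv 0 \pmod{\tau}$.

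For the remaining terms, the key observation is that for every $i \geq 1$ the ramification group $G_i$ is contained in the wild inertia group $G_1$, which is a $p$-group. Hence $|G_i|$ is a power of $p$ for all $i \geq 1$. Since $\tau = |T|$ is prime to $p$, each $|G_i|$ is a unit modulo $\tau$; but I need more than that — I need the individual differences $|G_i| - 1$, or at least their sum, to vanish mod $\tau$. The cleanest route is to recall that each $|G_i| - 1$ counts (with the quotients being cyclic $p$-groups in the relevant factors) certain orbit sizes. Concretely, the group $T = G_0/G_1$ acts by conjugation on each $G_i$ for $i \geq 1$ (the filtration is normal, and $G_1$ acts trivially on $G_i/[\text{something}]$ up to the standard facts about the graded pieces being elementary abelian $p$-groups on which $T$ acts), and this action fixes only the identity element of $G_i$ when one passes to the successive quotients $G_i/G_{i+1}$, because the graded pieces are faithful-enough $T$-modules with no nonzero fixed vectors. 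Thus each nontrivial $T$-orbit on $G_i \setminus \{1\}$ has size dividing $\tau$, giving $|G_i| - 1 \equiv (\text{number of fixed points other than }1) \equiv 0 \pmod{\tau}$ provided $T$ acts without nonzero fixed points on the graded pieces. Summing over $i$ from $1$ to $l$ then yields $\sum_{i=1}^l (|G_i|-1) \equiv 0 \pmod \tau$, and combined with the $i=0$ term we get $c_{\operatorname{Reg}} \equiv -1 \pmod \tau$.

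The main obstacle is justifying that $T$ acts without nonzero fixed points on the graded pieces $\mathrm{gr}^i = G_i/G_{i+1}$ for $i \geq 1$ — equivalently, that no nontrivial element of wild inertia is centralized mod $G_{i+1}$ by all of tame inertia. This is a standard fact in ramification theory: a generator of the tame quotient acts on $\mathrm{gr}^i$ (an $\mathbb{F}_p$-vector space) through multiplication by $\zeta^i$ for $\zeta$ a primitive $\tau$-th root of unity in the residue field, so the only fixed vector is $0$ unless $\tau \mid i$; one must either invoke this via \cite{Ser} or Katz's treatment in \cite{Katz88}, or handle the (harmless) case $\tau \mid i$ separately — in that case $|\mathrm{gr}^i|$ is a power of $p$ and one instead uses that the sum telescopes, so the coarser statement $\sum (|G_i| - 1) \equiv 0$ still holds by grouping and using $e = \tau|G_1|$ together with $|G_i| \equiv |G_{i+1}| \pmod{\tau}$ is not automatic — so the care really does go into the fixed-point-free claim. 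If a more self-contained argument is wanted, one can instead write $\sum_{i=1}^l(|G_i|-1)$ as $\sum_{i=1}^l |G_i| - l$ and combine with the identity $e(s+1) - 1 = c_{\operatorname{Reg}} + l$ from \lemref{pre-valuation-lemma} to get $es = \sum_{i=1}^l |G_i|$, whence $\sum_{i=1}^l |G_i| = es \equiv 0 \pmod{\tau}$ if and only if $\tau \mid es$, i.e. if and only if $|G_1| s \in \mathbb{Z}$ times $\tau \mid e s$ — which again reduces to a divisibility of $es$ by $\tau$ that follows from $e = \tau |G_1|$ together with $|G_1| s \in \mathbb Z$ (this last being part of the upper-numbering integrality, since $s|G_1|$ equals the largest lower-numbering break). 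This second approach is likely the shortest to write, so I would present it as the main line and relegate the orbit-counting to a remark.
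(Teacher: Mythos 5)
Your proposal and the paper's proof are genuinely different in approach, but your proposal has gaps in both of its proposed routes that prevent it from working as written.

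\textbf{The orbit-counting route.} You want to show $\sum_{i=1}^{l}(|G_i|-1)\equiv 0\pmod{\tau}$ by letting $T=G_0/G_1$ act on the graded pieces. Two problems. First, you assert that nontrivial $T$-orbits having size \emph{dividing} $\tau$ forces $|G_i|-1\equiv 0\pmod\tau$; that deduction requires the orbits to have size \emph{exactly} $\tau$, i.e.\ that $T$ act freely on the nonzero vectors, not merely fixed-point-freely. Second, even with Serre's formula, a generator of $T$ acts on $G_i/G_{i+1}$ by the scalar $\theta^i$, whose kernel has order $\gcd(\tau,i)$; so the nontrivial orbits have size $\tau/\gcd(\tau,i)$. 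Thus when $\gcd(\tau,i)>1$ (not only when $\tau\mid i$) you get $|G_i/G_{i+1}|\equiv 1$ only modulo a proper divisor of $\tau$, which is not enough. Beyond that, $T$ does not act on $G_i$ itself, so passing from the graded pieces back to $|G_i|$ requires a further multiplicativity argument that you have not supplied, and it would still founder on the preceding point.

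\textbf{The algebraic route.} Writing $\sum_{i=1}^{l}(|G_i|-1)=es-l$, the congruence you actually need is $es\equiv l\pmod\tau$, but you instead argue that $es\equiv 0\pmod\tau$, silently dropping the $-l$. These differ precisely by $l\bmod\tau$. Moreover, your justification — that $|G_1|s\in\mathbb Z$ because ``$s|G_1|$ equals the largest lower-numbering break'' — is false: the paper's formula gives $s|G_1|=\tfrac{1}{\tau}\sum_{i=1}^{l}|G_i|$, which is not $l$ in general. Indeed, the paper's \corref{IntegralityScor} records that $|G_1|s\in\mathbb Z$ is \emph{equivalent} to $\tau\mid l$, so the integrality you are invoking is not a free fact but is itself exactly the point at stake; taking it for granted makes the argument circular.

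\textbf{Contrast with the paper.} The paper avoids the ramification filtration entirely after \lemref{pre-valuation-lemma} and instead works on the representation-theory side: it expands $c_{\operatorname{Reg}}=\sum_{V}c_V\dim V$ over irreducible complex representations, lets $T^\vee$ act on the set of irreducibles by twisting, observes that the tame irreducibles form a single orbit contributing $\tau-1$, and then shows each orbit of wild irreducibles contributes a multiple of $\tau$ by proving $c_W=c_V$ along the orbit and using Clifford theory to show the stabilizer order divides $\dim V$. That argument sidesteps the delicate behaviour of the graded pieces at indices $i$ with $\gcd(\tau,i)>1$, which is exactly where your approach breaks down.
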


\begin{proof}

Consider the action of the group $T^\vee$ (identified with those homomorphisms from $G$ to $\mathbb{C}^\times$ that are trivial on $G_1$) on the set $\mathcal{R}$ of (isomorphism classes of) finite-dimensional complex irreducible representations of $G$ by taking tensor products. 
The subset $\mathcal{R}^{\text{tame}}$ of $\mathcal{R}$ of those representations that factor via $T$ forms a single orbit of length $\tau$. The Artin conductor of each nontrivial representation in this orbit is $1$, while the Artin conductor of the trivial representation is $0$, so
\begin{equation*}
c_{\text{Reg}} = \sum_{V \in \mathcal{R}} c_V \dim_{\mathbb{C}} V = 
\tau - 1 + \sum_{V \in \mathcal{R} \setminus \mathcal{R}^{\text{tame}}} c_V \dim_{\mathbb{C}} V.
\end{equation*}

Our task is therefore to show that $\tau$ divides the last sum above.
Fixing a representation $V$ of $G$ that is not tame, it is sufficient to show that $\tau$ divides
\[
\sum_{W \in O_V} c_W \dim_{\mathbb{C}} W
\]
where $O_V$ is the orbit of $V$ under the aforementioned action of $T^\vee$.
The representations in $O_V$ are tensor products of $V$ by one-dimensional representations so
\[
\sum_{W \in O_V} c_W \dim_{\mathbb{C}} W = \dim_{\mathbb{C}} V \sum_{W \in O_V} c_W.
\]

We claim that $c_W = c_V$ for every $W \in O_V$. To see this, note first that for every $1 \leq i \leq l$, the $G_i$-representations $V$ and $W$ are isomorphic. Second we note that
\[
\dim_{\mathbb{C}} W^{G} = \dim_{\mathbb{C}} (V \otimes \chi)^{G} = \dim_{\mathbb{C}} \mathrm{Hom}_G(\bar{\chi},V) = 0
\]
where $\chi \in T^\vee$, and the last equality follows from Schur's Lemma and our assumption that the irreducible representation $V$ is not tame, 
hence not isomorphic to the irreducible representation $\bar{\chi}$. Our claim follows.

Denoting by $S_V \leq T^\vee$ the stabilizer of $V$, we get
\[
\dim_{\mathbb{C}} V \sum_{W \in O_V} c_W = c_V \dim_{\mathbb{C}} V |O_V| = c_V \tau \frac{\dim_{\mathbb{C}} V }{|S_V|}.
\]
It would therefore be enough to show that $|S_V|$ divides $\dim_{\mathbb{C}} V$.
To do this we consider the subgroup
\[
H = \bigcap_{\chi \in S_V} \mathrm{Ker}(\chi)
\]
of $T$, and let $H_0$ be the inverse image of $H$ under the quotient map $G \to T$. We have
\[
[G : H_0] = [T : H] = |(T/H)^\vee| = |\{\chi \in T^\vee : H \subseteq \mathrm{Ker}(\chi) \}| = |S_V|
\]
so we are tasked with showing that $[G : H_0]$ divides $\dim_{\mathbb{C}} V$.

Denoting by $\chi_V$ the character of $V$, we claim that $\chi_V(g) = 0$ for every $g \in G$ that does not lie in $H_0$.
Indeed, if that was not the case, we could find a character $\psi \in T^\vee$ with 
\[
\psi(g) \neq 1, \quad H \subseteq \mathrm{Ker}(\psi)
\]
and get that $V \otimes \psi \ncong V$ since their characters satisfy $\chi_V(g) \psi(g) \neq \chi_V(g)$. This would mean that $\psi \notin S_V$, contradicting our assumption that $H$ is contained in $\mathrm{Ker}(\psi)$ because
\[
S_V = \{\chi \in T^\vee : H \subseteq \mathrm{Ker}(\chi)\}.
\]
The claim is thus proven.

Since $V$ is an irreducible representation of $G$ we have
\[
[G : H_0] = \frac{1}{|H_0|} \sum_{g \in G} |\chi_V(g)|^2 = \frac{1}{|H_0|} \sum_{g \in H_0} |\chi_V(g)|^2
\]
which is the number of (not necessarily distinct) irreducible representations of $H_0$ appearing in a direct sum decomposition of the restriction of $V$ to $H_0$. 
Since $H_0$ is a normal subgroup of $G$, Clifford theory tells us that these irreducible representations of $H_0$ all have the same dimension, say $d$, 
so $[G : H_0]d = \dim_{\mathbb{C}} V$. 
In particular, $[G : H_0]$ divides $\dim_{\mathbb{C}} V$, which is what we had to demonstrate.
\end{proof}

\begin{cor} \label{IntegralityScor}

The following three conditions are equivalent.

\begin{itemize}

\item The integer $\tau$ divides $l$.

\item The rational number $s$ satisfies $|G_1|s \in \Z$.

\item The rational number $s$ lies in $\Z[p^{-1}]$.

\end{itemize}

\end{cor}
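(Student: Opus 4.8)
The plan is to show that each of the three conditions is equivalent to the single divisibility $\tau \mid es$, which by \cref{pre-valuation-lemma} makes sense since $es \in \Z$.

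I would begin by recalling the standard structure of the ramification filtration: the wild inertia group $G_1$ is a $p$-group, so $\abs{G_1}$ is a power of $p$, while the tame quotient $T = G/G_1$ has order $\tau$ coprime to $p$; since $G_0 = G$ this gives $e = \abs{G} = \tau \abs{G_1}$, and in particular $e \equiv 0 \pmod{\tau}$. Feeding the congruence $c_{\operatorname{Reg}} \equiv -1 \pmod{\tau}$ of \cref{CongruenceArtinConductor} into the identity $e(s+1) - 1 = c_{\operatorname{Reg}} + l$ of \cref{pre-valuation-lemma}, rewritten as $es = c_{\operatorname{Reg}} + l - e + 1$, yields
\[
es \equiv -1 + l - 0 + 1 \equiv l \pmod{\tau}.
\]
Since $es \in \Z$, this already shows $\tau \mid l \iff \tau \mid es$, so the first condition is equivalent to $\tau \mid es$.

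For the second condition, the relation $e = \tau\abs{G_1}$ gives $\abs{G_1}\,s = es/\tau$, so $\abs{G_1}\,s \in \Z$ if and only if $\tau \mid es$. For the third condition, write $s = es/(\tau\abs{G_1})$ with $es \in \Z$; because $\abs{G_1}$ is a power of $p$ while $\tau$ is prime to $p$, the prime-to-$p$ part of the denominator of $s$ in lowest terms is exactly $\tau/\gcd(\tau, es)$, which is trivial precisely when $\tau \mid es$. Hence $s \in \Z[p^{-1}] \iff \tau \mid es$ as well, and the three conditions are equivalent.

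The argument is entirely elementary once the two lemmas are in hand; the only step requiring a moment's care is the last one, where one separates the $p$-primary and prime-to-$p$ parts of the denominator of $s$ and uses $\gcd(\tau,p)=1$ so that they may be treated independently. (In the degenerate tame case $G_1 = 1$ one has $\tau = e$, $l = 0$ and $s = 0$, so all three conditions hold trivially, consistent with the statement.)
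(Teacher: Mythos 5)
Your proof is correct and relies on exactly the same ingredients as the paper's: the identity $l = es + e - (c_{\operatorname{Reg}}+1)$ from \cref{pre-valuation-lemma}, the congruence of \cref{CongruenceArtinConductor}, the factorization $e = \tau\abs{G_1}$, and the coprimality of $\tau$ and $p$. The only difference is organizational — you reduce all three conditions to the single pivot $\tau \mid es$, whereas the paper proves $1 \Leftrightarrow 2$ directly and then handles $2 \Leftrightarrow 3$ via a $\gcd$ argument with a chosen power $q$ of $p$; your version is a clean streamlining of the same argument rather than a genuinely different route.
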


\begin{proof}

By \cref{pre-valuation-lemma} we have $l = es + e - (c_{\operatorname{Reg}}+1)$, where $e$ is divisible by $\tau$ in view of Lagrange's theorem and $c_{\operatorname{Reg}}+1$ is divisible by $\tau$ in view of \cref{CongruenceArtinConductor}. We conclude that $\tau$ divides $l$ if and only if
\[
|G_1|s = \frac{e}{\tau}s \in \mathbb{Z} 
\]
so the first two conditions are equivalent, and they imply the third condition because $|G_1|$ is a power of $p$.

To show that the third condition implies the second, we take a power $q$ of $p$ with $qs \in \Z$. 
By \cref{pre-valuation-lemma}, we also have $es \in \Z$ so $\gcd(q,e)s \in \Z$. Since $\tau$ is not divisible by $p$ we have
\[
\gcd(q,e) = \gcd(q, \tau |G_1|) = \gcd(q, |G_1|)
\]
which divides $|G_1|$, hence $|G_1|s \in \Z$. Our three conditions are thus seen to be equivalent.
\end{proof}

Let $t$ be a uniformizer of $F$, let $u$ be a uniformizer of $K$, and let $v$ be the ($u$-adic) valuation on $K$.  
Our assumption that $K/F$ is totally ramified means that $\mathcal{O}_K = \mathcal{O}_F[u]$ and that the the minimal polynomial 
\[
f(x) = x^e + \sum_{j=0}^{e-1} a_jx^j \in \mathcal{O}_F[x]
\]
of $u$ over $F$ is an Eisenstein polynomial. In particular $t$ divides $a_j$ for every $0 \leq j \leq e-1$.

Next we show that the Newton polygon of $f(u+x)$ lies above a certain line.

\begin{lem}\label{valuation-lemma}

We write 
\[
f(u+x) = x^e + \sum_{j=1}^{e-1} b_j x^j
\]
with $b_j \in \mathcal{O}_K$. Then, for every $1 \leq j \leq e-1$ we have 
\begin{equation}\label{u-valuation-bound} v( b_j) \geq e(s+1) - j (l+1) .\end{equation}
We have equality in \eqref{u-valuation-bound} if $j=1$. If $j > 1$ and $s>0$, then we have equality in \eqref{u-valuation-bound} only if $j$ is a power of $p$ and $j \leq \abs{G_l}$. 

\end{lem}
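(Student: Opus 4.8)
The plan is to compute the $u$-adic valuations of the coefficients $b_j$ by expanding $f(u+x)$ via Taylor's formula and relating $v(b_j)$ to the valuation of the derivatives $f^{(j)}(u)$. First I would write
\[
f(u+x) = \sum_{j=0}^{e} \frac{f^{(j)}(u)}{j!} x^j,
\]
so that $b_j = f^{(j)}(u)/j!$ for $1 \leq j \leq e-1$, noting that $f(u) = 0$ and $f^{(e)}(u)/e! = 1$ give the stated shape. The key inputs are: the different $\mathfrak{d}_{K/F}$ is generated by $f'(u)$, so $v(f'(u)) = \sum_{i=0}^{l}(|G_i|-1)$ by the classical formula for the different in the lower numbering (this is $c_{\operatorname{Reg}}$ from \cref{pre-valuation-lemma}); and by \cref{pre-valuation-lemma} we have $c_{\operatorname{Reg}} + l = e(s+1) - 1$, hence $v(f'(u)) = e(s+1) - 1 - l = e(s+1) - (l+1)$, which is precisely the claimed bound \eqref{u-valuation-bound} for $j=1$, with equality. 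That settles the equality assertion for $j = 1$.

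For the general lower bound, I would proceed by relating $f^{(j)}(u)$ to $f'(u)$. The cleanest route is to use that $f(x) = \prod_{\sigma \in G}(x - \sigma u)$, so that
\[
f^{(j)}(u) = j! \sum_{\substack{S \subseteq G \setminus \{1\} \\ |S| = e - j}} \prod_{\sigma \in S} (u - \sigma u),
\]
and hence $b_j = f^{(j)}(u)/j! = \sum_{|S| = e-j} \prod_{\sigma \in S}(u - \sigma u)$ is a sum of products of $e-j$ of the quantities $u - \sigma u$ for $\sigma \neq 1$. Since $v(u - \sigma u) \geq 1$ for all $\sigma \neq 1$ and $v(f'(u)) = \sum_{\sigma \neq 1} v(u - \sigma u)$, each term omits $j-1$ factors $u - \sigma u$ from the full product, each of valuation at most $v(u - \sigma u) \leq l+1$ (since $\sigma \in G_{v(u-\sigma u) - 1}$ forces $v(u - \sigma u) - 1 \leq l$); therefore $v(b_j) \geq v(f'(u)) - (j-1)(l+1) = e(s+1) - (l+1) - (j-1)(l+1) = e(s+1) - j(l+1)$, which is \eqref{u-valuation-bound}.

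For the equality characterization when $j > 1$ and $s > 0$: equality forces the minimum-valuation term to be unique and to omit exactly $j-1$ factors $u - \sigma u$ all of maximal valuation $l+1$, i.e. all with $\sigma \in G_l \setminus G_{l+1} = G_l$. A counting/combinatorial argument on $\binom{|G_l|}{j-1}$ versus the $p$-adic valuations (the number of such subsets must be a $p$-adic unit for the leading terms not to cancel, and one must also check no other subset $S$ achieves the same valuation) then forces $j - 1 \leq |G_l|$, and a Kummer/Lucas-type analysis of when $\binom{|G_l|}{j-1} \not\equiv 0 \bmod p$ — using that $|G_l|$ is a power of $p$ — forces $j - 1$ to be such that $\binom{p^a}{j-1}$ is a unit, which happens only when $j - 1 \in \{0, p^a\}$ type constraints; combined with comparing against subsets using one factor outside $G_l$, this pins down $j$ being a power of $p$ with $j \leq |G_l|$. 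I expect this last equality-case analysis to be the main obstacle: one must carefully track which subsets $S$ of size $e-j$ can achieve the extremal valuation $e(s+1) - j(l+1)$, rule out cancellation among them using $p$-adic valuations of binomial coefficients, and use the hypothesis $s > 0$ (equivalently $l \geq 1$, so $G_l$ is a nontrivial $p$-group) to make the combinatorics bite. The bound $j \leq |G_l|$ comes from needing a subset of $j-1$ elements inside $G_l$, and "$j$ a power of $p$" comes from the non-vanishing mod $p$ of the relevant multinomial coefficient counting the extremal terms.
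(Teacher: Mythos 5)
Your lower bound argument and your $j=1$ equality argument match the paper's approach in substance: both expand $f(u+x) = \prod_{\sigma \in G}(u+x-\sigma(u))$, identify $b_1 = f'(u)$ with $v(f'(u)) = c_{\operatorname{Reg}} = e(s+1)-(l+1)$, and obtain the general bound by noting each term of $b_j$ omits $j-1$ factors $u - \sigma(u)$, each of valuation at most $l+1$. (Your Taylor-expansion detour $b_j = f^{(j)}(u)/j!$ is formally dubious when $p \mid j!$, but you land on the correct product formula for $b_j$, so this is harmless.)

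However, your sketch of the equality characterization for $j>1$, $s>0$ rests on the wrong mechanism. You propose controlling cancellation by the $p$-adic valuation of $\binom{|G_l|}{j-1}$ via Lucas/Kummer. But the $\overline{\epsilon_\sigma}$ (the leading coefficients of $u - \sigma(u)$ in the residue field) are \emph{distinct}, not a single repeated scalar, so the extremal sum is not a binomial coefficient times a power. Worse, if it were, Lucas's theorem on $\binom{p^a-1}{j-1}$ (all base-$p$ digits of $p^a-1$ equal $p-1$) would make it a unit mod $p$ for \emph{every} $j-1 \leq p^a - 1$, yielding no constraint whatsoever; so binomial combinatorics cannot produce the ``$j$ a power of $p$'' conclusion. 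The actual argument in the paper is different: one shows that $\sigma \mapsto \overline{\epsilon_\sigma}$ (with $\overline{\epsilon_1}=0$) is an injective group homomorphism $G_l \to k$, so $G_l$ is elementary abelian $p$ and the images form an $\mathbb{F}_p$-subspace of $k$. Consequently $\prod_{\sigma \in G_l}(x + \overline{\epsilon_\sigma})$ is an \emph{additive} ($p$-)polynomial, and the extremal sum is precisely its coefficient of $x^j$; additive polynomials have nonzero coefficients only at $p$-power exponents up to the degree $|G_l|$. This is the idea missing from your proposal, and without it the equality characterization does not go through.
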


\begin{proof}

The Galois conjugates of $u$ are the roots of $f$, so we have
\[ f(u+x) = \prod_{ \sigma \in G}  ( u+ x- \sigma(u)) \]
and thus
\[ b_j = \sum_{ \substack{S \subseteq G \\ \abs{S}=j}} \prod_{ \sigma \in G \setminus S} (u - \sigma(u))\]
for every $1 \leq j \leq e-1$.
There is no contribution from subets $S$ not containing $1$  because for such $S$ there is a factor corresponding to $\sigma = 1$ which vanishes.
Hence, 
\[v (b_j) \geq \min_{ \substack{S \subseteq G \\ \abs{S}=j \\ 1 \in S}} \sum_{ \sigma \in G \setminus S} v( u-\sigma(u)) \geq \sum_{ \sigma \in G\setminus \{1\}} v(u - \sigma(u) ) -  (j-1) \max_{ \sigma \in G \setminus \{1\}}  v( u-\sigma(u)).\]

By definition of $G_i$, for each $\sigma \in G \setminus \{1\}$ we have 
\[v(u-\sigma(u)) = \max \{ i \geq 0 \mid \sigma \in G_{i-1} \}  = \sum_{i=1}^{l+1} \mathbf{1}_{ \sigma \in G_{i-1}} \] 
so 
\[  \max_{ \sigma \in G \setminus \{1\}}  v( u-\sigma(u)) = \max \{ i \geq 0 \mid  \sigma \in G_{i-1} \textrm{ for some }\sigma \neq 1 \} = l + 1\] and 
\[ \sum_{ \sigma \in G\setminus \{1\}} v(u - \sigma(u) ) = \sum_{ \sigma \in G\setminus \{1\}} \sum_{i=1}^{l+1} 1_{ \sigma \in G_{i-1}} =  \sum_{ \sigma \in G\setminus \{1\}} \sum_{i=0}^{l} \mathbf{1}_{ \sigma \in G_{i}}
 = \sum_{i=0}^{l} (\abs{G_i} - 1 ) = c\]
 by \cref{pre-valuation-lemma}.
Thus \[ v(b_j) \geq c - (j-1) (l+1) = c + l+1 - j (l+1)  = e (s+1) - j(l+1) \]
where the last equality again follows from \cref{pre-valuation-lemma}.
 
The $u$-adic valuation of $b_j$ is exactly $ e (s+1) - j(l+1) $ if and only if 
\[ \sum_{ \substack{S \subseteq G \\ \abs{S}=j \\ 1 \in S}} \prod_{ \sigma \in G \setminus S} (u - \sigma(u)) \not \equiv  0 \mod u^{  e (s+1) + 1 - j (l+1) } \]
in $\mathcal{O}_K$. 
For every $j$-element subset $S$ of $G$ containing $1$ we have 
\begin{equation*}
\begin{split} 
v \left( \prod_{ \sigma \in G \setminus S} (u - \sigma(u))  \right)  &= \sum_{ \sigma \in G\setminus \{1\}} v(u - \sigma(u) ) -  \sum_{\sigma \in S\setminus\{1\}} v( u-\sigma(u)) \\
&\geq   \sum_{ \sigma \in G\setminus \{1\}} v(u - \sigma(u) ) -  (j-1) \max_{ \sigma \in G \setminus \{1\}}  v( u-\sigma(u))
\end{split}
\end{equation*}
which we have already seen equals $e (s+1) - j (l+1)$.
The inequality above is an equality if (and only if) $ v( u-\sigma(u)) = l+1$ for all $\sigma \in S \setminus \{1\}$, or equivalently $S \subseteq G_l$. 
 
 So if $S \not\subseteq G_l$ then 
 \[
 \prod_{ \sigma \in G \setminus S} (u - \sigma(u)) \equiv 0 \mod u^{ e(s+1) + 1 - j (l+1)}
 \] 
 hence 
 \[ \sum_{ \substack{S \subseteq G \\ \abs{S}=j \\ 1 \in S}} \prod_{ \sigma \in G \setminus S} (u - \sigma(u))  \equiv   \sum_{ \substack{S \subseteq G_l \\ \abs{S}=j \\ 1 \in S}} \prod_{ \sigma \in G \setminus S} (u - \sigma(u))  \mod u^{  e (s+1) + 1 - j (l+1) }. \]
For $\sigma \in G_l \setminus \{1\}$, we can write  $u - \sigma(u) = \epsilon_\sigma u^{l + 1} + \rho_\sigma$ such that $v(\rho_\sigma) \geq l+2$ 
and $\epsilon_\sigma \in \mathcal{O}_K^\times$.
It follows that \[\prod_{ \sigma \in G \setminus S} (u - \sigma(u)) \equiv \prod_{ \sigma \in G_l  \setminus S} \epsilon_\sigma u^{l+1} \prod_{\sigma \in G \setminus G_l} (u-\sigma(u))   \mod u^{ e(s+1) + 1 - j (l+1)}\]  
so the $u$-adic valuation of $b_j$ is exactly $e(s+1) - j (l+1)$ if and only if
\[\sum_{ \substack{ S \subseteq G_l \\ \abs{S}= j \\ 1 \in S}} \prod_{ \sigma \in G_l \setminus S} \overline{\epsilon_\sigma} \neq 0 \]
where $\overline{\epsilon_\sigma}$ is the image of $\epsilon_\sigma$ in the residue field $k = \mathcal{O}_K/(u)$ of $K$.
 
In case $j=1$ there is only one summand above, corresponding to $S = \{1\}$, so we get a product of elements in $k^\times$. 
In case $j > 1$ and $s > 0$ (equivalently $l > 0$), the group $G_l$ is an elementary abelian $p$-group because $\sigma \mapsto \overline{\epsilon_\sigma}$ becomes an injective group homomorphism form $G_l$ to $k$ once we set $\overline{\epsilon_1} = 0$.
It follows that the polynomial
\[
\prod_{ \sigma \in G_l } (x + \overline{\epsilon_\sigma}) \in k[x]
\]
is additive, so its $j$th coefficient, which is precisely the sum we are interested in, does not vanish only if $j$ is a power of $p$ not exceeding $\abs{G_l}$.
\end{proof}

Let $R$ be a strictly Henselian local ring with residue field $k$.

\begin{prop} \label{HenselianFactors}

Let $\alpha \colon \mathcal{O}_F \to R$ be a homomorphism of local rings. 
Let
\[
\alpha(f) = x^e + \sum_{j=0}^{e-1} \alpha(a_j)x^j \in R[x]
\]
be the polynomial obtained from $f$ by applying $\alpha$ to each coefficient of $f$.
Then $R[x]/(\alpha(f))$ is a (strictly) Henselian local ring with residue field $k$, and maximal ideal containing $x$.

%there exists a finite set $\Lambda$, and strictly Henselian local rings $A_\lambda$ such that
%\[
%R[x]/(\alpha(f)) \cong \prod_{\lambda \in \Lambda} A_\lambda.
%\] 
%Moreover for each $\lambda \in \Lambda$, the projection of $x$ to $A_\lambda$ lies in the maximal ideal of $A_\lambda$.
%In other words, $x$ lies in the Jacobson radical of $R[x]/(\alpha(f))$.

\end{prop}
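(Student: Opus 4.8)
The plan is to leverage the Eisenstein property of $f$ together with the fact that $\alpha$ is a homomorphism of local rings, reducing the statement to a single application of the structure theory of finite algebras over Henselian local rings.

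First I would record that $A \mathrel{\mathop:}= R[x]/(\alpha(f))$ is a free $R$-module of rank $e$, since $\alpha(f)$ is monic of degree $e$; in particular $A$ is a module-finite $R$-algebra. Writing $\mathfrak m_R$ for the maximal ideal of $R$ and $\mathfrak m_F = (t)$ for that of $\mathcal O_F$, the hypothesis that $\alpha$ is local gives $\alpha(\mathfrak m_F) \subseteq \mathfrak m_R$, while the Eisenstein condition gives $a_j \in \mathfrak m_F$ for $0 \le j \le e-1$; hence $\alpha(a_j) \in \mathfrak m_R$ and $\alpha(f) \equiv x^e \pmod{\mathfrak m_R R[x]}$. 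Reducing modulo $\mathfrak m_R$ then produces a canonical isomorphism $A/\mathfrak m_R A \cong k[x]/(x^e)$, an Artinian local ring with maximal ideal generated by the class $\bar x$ of $x$ and residue field $k$.

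Next I would deduce that $A$ is local. Since $A$ is finite over the local ring $R$, one has $\mathfrak m_R A \subseteq \operatorname{Jac}(A)$, so the maximal ideals of $A$ correspond bijectively to the maximal ideals of $A/\mathfrak m_R A$; as the latter ring is local, $A$ has a unique maximal ideal $\mathfrak m_A$, namely the preimage of $(\bar x)$ under $A \twoheadrightarrow A/\mathfrak m_R A$. In particular $\mathfrak m_A$ contains the class of $x$, and $A/\mathfrak m_A \cong \bigl(k[x]/(x^e)\bigr)/(\bar x) \cong k$.

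Finally, to obtain the Henselian property, I would invoke the standard fact that a finite algebra over a Henselian local ring decomposes as a finite product of Henselian local rings; applied to $A$, which we have just shown is local, this gives that $A$ is Henselian. Since the residue field of $A$ is $k$, which is separably closed because $R$ is strictly Henselian, $A$ is in fact strictly Henselian. The only step here that is not pure bookkeeping is the passage from ``$A/\mathfrak m_R A$ local'' to ``$A$ local and Henselian,'' which is exactly where the finiteness of $A$ over $R$ and the Henselian hypothesis on $R$ are used; everything else follows directly from $f$ being Eisenstein and $\alpha$ being a local homomorphism.
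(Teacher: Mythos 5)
Your proof is correct and follows essentially the same strategy as the paper's: exploit that $A = R[x]/(\alpha(f))$ is finite over the Henselian local ring $R$ (so it splits into a finite product of Henselian local factors), and use the Eisenstein condition together with the locality of $\alpha$ to pin down the maximal ideal structure. The only organizational difference is that you reduce modulo $\mathfrak m_R$ at the outset to identify $A/\mathfrak m_R A \cong k[x]/(x^e)$ and then transfer locality back to $A$ via integrality (lying over), whereas the paper works directly with an arbitrary maximal ideal $M$ of $A$, showing $x \in M$ from the relation $x^e = -\sum_j \alpha(a_j) x^j$ and then passing to $R[x]/(\alpha(f),x) \cong R/(\alpha(a_0))$; these are two ways of packaging the same computation and buy nothing materially different from one another.
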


\begin{proof}

The ring $R[x]/(\alpha(f))$ is finite over $R$, so it is a finite product of Henselian local rings.
We need to show that $R[x]/(\alpha(f))$ has a unique maximal ideal, that $x$ lies in this ideal, and that the quotient of $R[x]/(\alpha(f))$ by this ideal is the quotient of $R$ by its maximal ideal. 

We first take a maximal ideal $M$ of $R[x]/(\alpha(f))$, and show that $x \in M$.
By definition of $f$, we have the equality
\[
x^e = - \sum_{j=0}^{e-1} \alpha(a_j) x^j
\]
in $R[x]/(\alpha(f))$.
Since $M$ is a maximal ideal, in particular a radical ideal, in order to conclude that $x \in M$ it suffices to show that
\[
\sum_{j=0}^{e-1} \alpha(a_j) x^j \in M.
\]
It would therefore be enough to prove that $\alpha(a_j) \in M$ for every $0 \leq j \leq e-1$.

Since $\alpha(a_j) \in R$, we need to show that $\alpha(a_j) \in M \cap R$.
As $R[x]/(\alpha(f))$ is an integral extension of $R$, the ideal $M \cap R$ is maximal in $R$, and $R$ is local, so $M \cap R$ is the unique maximal ideal of $R$.
We have assumed that $a_j$ lies in the maximal ideal of $\mathcal{O}_F$, and that $\alpha$ is a local homomorphism, so $\alpha(a_j)$ lies in the maximal ideal of $R$, as required.

We conclude that the natural map 
\[
\Spec R[x]/(\alpha(f),x) \to \Spec R[x]/(\alpha(f))
\]
induces a bijection on maximal ideals. We have
\[
R[x]/(\alpha(f),x) \cong R/(\alpha(a_0))
\]
and we have seen that $\alpha(a_0)$ lies in the maximal ideal of $R$ so $R/(\alpha(a_0))$ is a local ring as well.
It follows that $R[x]/(\alpha(f))$ has only one maximal ideal, and that the quotient by this ideal is naturally identified with the residue field of $R/(\alpha(a_0))$. The latter field is $k$, the residue field of $R$.
\end{proof}

\begin{lem}\label{galois-agreement-lemma}  

Let $\alpha_1, \alpha_2 \colon \mathcal O_F \to R$ be two homomorphisms of local rings. Suppose that, for each $x \in \mathcal O_F$,   \[ \alpha_1(x) \equiv \alpha_2(x) \mod  \alpha_1(t)^{ 1 + \lceil s \rceil} \] and, if $s=0$, that $\alpha_1(t)$ and $\alpha_2(t)$ differ by multiplication by a unit.
Then \[ R \otimes_{ \mathcal O_F, \alpha_1} \mathcal O_K \cong R \otimes_{\mathcal O_F, \alpha_2} \mathcal O_K\] 
as $R$-algebras with an action of $G$. If $R$ is an integral domain and $\alpha_1(t) \neq 0$, then there are only finitely many distinct isomorphisms between the two $R$-algebras above.

\end{lem}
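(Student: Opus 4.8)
The plan is to prove the isomorphism by writing $\mathcal{O}_K = \mathcal{O}_F[u] = \mathcal{O}_F[x]/(f)$ via the Eisenstein minimal polynomial $f$, so that $R \otimes_{\mathcal{O}_F, \alpha_i} \mathcal{O}_K \cong R[x]/(\alpha_i(f))$. By \propref{HenselianFactors} each of these is a strictly Henselian local ring with residue field $k$ and maximal ideal containing $x$. The Galois action of $G$ on $\mathcal{O}_K$ comes from the fact that the conjugates $\sigma(u)$ of $u$ again have minimal polynomial $f$; after base change to the strictly Henselian ring $R$ along $\alpha_i$, Hensel's lemma lets us factor $\alpha_i(f)$ over $R[x]/(\alpha_i(f))$ into distinct linear factors (the images of $x - \sigma(u)$), so $G$ permutes these roots and hence acts on $R[x]/(\alpha_i(f))$ as $R$-algebra automorphisms. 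To compare $\alpha_1$ and $\alpha_2$, I would pass to $f(u+x)$ and use \lemref{valuation-lemma}: writing $f(u+x) = x^e + \sum_{j=1}^{e-1} b_j x^j$, the lemma gives $v(b_j) \geq e(s+1) - j(l+1)$, with equality at $j=1$. The key point is that $\alpha_1$ and $\alpha_2$ agree modulo $\alpha_1(t)^{1+\lceil s \rceil}$, and since $v(t) = e$ in $\mathcal{O}_K$, this means the coefficients of $\alpha_1(f(u+x))$ and $\alpha_2(f(u+x))$ agree to $u$-adic precision $e(1+\lceil s\rceil) \geq e(s+1)$, which (using the Newton polygon bound) is enough precision that the two polynomials define isomorphic étale-locally-trivial deformations; more precisely, one builds a matching root of $\alpha_2(f)$ near each root of $\alpha_1(f)$ by Hensel's lemma in the common strictly Henselian ring.

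\textbf{Key steps in order.} First, identify $R \otimes_{\mathcal{O}_F, \alpha_i} \mathcal{O}_K$ with $R[x]/(\alpha_i(f))$ and invoke \propref{HenselianFactors} to know it is strictly Henselian local with residue field $k$. Second, in $S_i := R[x]/(\alpha_i(f))$, use that $S_i$ is strictly Henselian and that the reduction of $\alpha_i(f)$ has $x$ as its only root (since $\alpha_i(a_j)$ lies in the maximal ideal) together with the nonvanishing of the discriminant-type quantity coming from the $j=1$ equality in \lemref{valuation-lemma} — i.e.\ $\alpha_i(f)'(u) = \prod_{\sigma \neq 1}(u - \sigma(u))$ has $u$-valuation exactly $c_{\mathrm{Reg}}$, hence is a nonzerodivisor that becomes a unit times a power of the uniformizer — to factor $\alpha_i(f)$ as $\prod_{\sigma \in G}(x - u_\sigma^{(i)})$ over $S_i$, with $u_\sigma^{(i)}$ the Hensel lift of $\sigma(u)$. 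The map $\sigma \mapsto (x \mapsto$ the automorphism sending $u_{\mathrm{id}}^{(i)} \mapsto u_\sigma^{(i)})$ gives the $G$-action. Third, compare: choose the uniformizer $\alpha_1(t)$ of (the maximal ideal of) $R$ — note $\alpha_1(t)$ need not be a nonzerodivisor unless $R$ is a domain, but that is fine for the isomorphism statement — and show that the congruence hypothesis forces the two $G$-sets of roots $\{u_\sigma^{(1)}\}$ and $\{u_\sigma^{(2)}\}$, viewed inside the strictly Henselian ring $S_1 \cong S_2$ after identification, to be carried to each other compatibly with the $G$-action, giving the desired $R$-algebra-with-$G$-action isomorphism. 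Fourth, for the finiteness clause: if $R$ is a domain with $\alpha_1(t) \neq 0$, then $S_1$ is reduced (its total ring of fractions is a product of finite separable extensions of $\operatorname{Frac}(R)$ because the discriminant is nonzero there), so $\operatorname{Aut}_{R\text{-alg}}(S_1)$ embeds into the permutations of the finitely many minimal primes / roots, hence is finite; the set of isomorphisms $S_1 \to S_2$ is a torsor under this finite group, so it is finite.

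\textbf{Main obstacle.} The delicate point is making precise \emph{why} agreement of $\alpha_1, \alpha_2$ to precision $\alpha_1(t)^{1+\lceil s\rceil}$ suffices to match the roots $G$-equivariantly, and handling the boundary case $s = 0$ (tame ramification), where one only has agreement to precision $\alpha_1(t)^{1}$ and must additionally assume $\alpha_1(t), \alpha_2(t)$ differ by a unit. Here the Newton polygon estimate of \lemref{valuation-lemma} is essential: after translating by $u$, the "error" between $\alpha_1(f(u+x))$ and $\alpha_2(f(u+x))$ has coefficients of $u$-valuation $\geq e(1 + \lceil s \rceil) \geq e(s+1)$, which exceeds the $u$-valuation $e(s+1) - (l+1)$ of the leading (in the Newton polygon sense) nonconstant coefficient $b_1$ — so the two translated polynomials cut out "the same" étale neighborhood to an order high enough that Hensel's lemma produces a bijection between root sets, and the $G$-action (which permutes roots by the fixed rule $u \mapsto \sigma(u)$ at the special fiber) is automatically respected because the bijection is determined by proximity in the $u$-adic topology. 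The case $s=0$ requires separate bookkeeping since $\lceil s \rceil = 0$ and one leans on the extra unit-multiple hypothesis to still get a ring isomorphism; I expect verifying this edge case and the precise Hensel-lemma matching to be where most of the real work lies, while the domain/finiteness part is comparatively routine once reducedness is in hand.
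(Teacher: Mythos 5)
Your high-level plan runs parallel to the paper's proof: identify $R \otimes_{\mathcal O_F, \alpha_i} \mathcal O_K$ with $R[x]/(\alpha_i(f))$ via the Eisenstein polynomial, invoke \cref{HenselianFactors}, and use the Newton-polygon bound from \cref{valuation-lemma} to set up a Hensel-lemma lift. The finiteness argument (pass to the fraction field, observe that $\tilde R[x]/(\alpha_1(f))$ is finite \'etale, hence a product of fields in which $\alpha_2(f)$ has finitely many roots) is essentially identical to the paper's and is fine.

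However, there are two substantive gaps. First, the assertion that Hensel's lemma splits $\alpha_i(f)$ into distinct linear factors over $S_i = R[x]/(\alpha_i(f))$ is not correct as stated: since $f$ is Eisenstein, the reduction of $\alpha_i(f)$ modulo the maximal ideal of $S_i$ is $x^e$, a single root of multiplicity $e$, and the derivative evaluated at that root has positive $u$-adic valuation (namely $c_{\mathrm{Reg}}$), so neither the ``distinct residues'' nor the ``unit discriminant'' hypothesis of Hensel's lemma holds. The polynomial does factor over $S_i$, but this is simply the base change of the factorization $f = \prod_\sigma(x - \sigma(u))$ over $\mathcal O_K$ (valid because $K/F$ is Galois), not a Hensel-lemma consequence. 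The paper avoids this entirely: it defines the $G$-action on $S_i$ via the tensor structure and never needs to split $\alpha_i(f)$.

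The more serious gap is your claim that $G$-equivariance of the root-matching isomorphism ``is automatically respected because the bijection is determined by proximity in the $u$-adic topology.'' This is precisely the part of the lemma where the paper spends most of its effort, and it is not automatic. After constructing the Hensel root $r$ of $E(y) = \alpha_2(f)(x + x^{l+1}y)/x^{e(s+1)}$ and setting $i(y) = x + x^{l+1}r$, one must verify $\sigma^{-1}(i(\sigma(y))) = i(y)$ for each $\sigma \in G$. This reduces (by uniqueness of Hensel lifts in the Henselian ring $S_1$) to the congruence $\sigma^{-1}(i(\sigma(y))) \equiv i(y) \bmod x^{l+2}$, which the paper establishes by a computation splitting into cases $s \notin \Z$, $\sigma \in G_1$, and $\sigma \notin G_1$. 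In the last case the computation produces a factor $\zeta^{l+1}$ (where $\zeta$ is the $\tau$-th root of unity by which $\sigma$ acts on the tangent direction), and one must know $\zeta^l = 1$; this is exactly \cref{IntegralityScor}, i.e.\ $\tau \mid l$ when $s \in \Z$, which itself depends on the Clifford-theory argument of \cref{CongruenceArtinConductor}. There is no way to extract this from ``proximity'': the two $G$-actions on $S_1$ and $S_2$ arise from distinct tensor structures, all roots reduce to $0$ in the residue field, and the required compatibility mod $x^{l+2}$ (rather than some weaker modulus) is the whole content. Your proposal names the congruence hypothesis and the Newton polygon as the relevant tools, which is the right instinct, but the $G$-equivariance verification — including the role of $\tau \mid l$ — is missing, and it is not a routine cleanup.
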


\begin{proof}

%Suppose now that $s>0$ and let $L = K^{G_1}$ be the maximal tamely ramified subextension of $K/F$.
%This is a Galois extension with $\Gal(L/F) = G/G_1$, a group of order not divisible by $p$.
%
%Our treatment of the case $s=0$ gives us a $G/G_1$-equivariant isomorphism
%\[ \phi \colon R \otimes_{ \mathcal O_F, \alpha_1} \mathcal O_L \to R \otimes_{\mathcal O_F, \alpha_2} \mathcal O_L\] 
%of $R$-algebras. Denoting the first algebra above by $S$ we get from \cref{HenselianFactors} that $S$ is a strictly Henselian local ring, and we have a $G/G_1$-equivariant morphism of $R$-algebras
%$\beta_1 \colon \mathcal{O}_L \to S$ given by $\beta_1(z) = 1 \otimes z$. 
%Similarly we have a natural map $\mathcal{O}_L \to R \otimes_{\mathcal O_F, \alpha_2} \mathcal O_L$, 
%and postcomposing it with $\phi^{-1}$ gives another $G/G_1$-equivariant morphism $\beta_2 \colon \mathcal{O}_L \to S$ of $R$-algebras. 
%It is sufficient to find a $G_1$-equivariant isomorphism
%\[
%S \otimes_{\mathcal{O}_L, \beta_1} \mathcal{O}_K \cong S \otimes_{\mathcal{O}_L, \beta_2} \mathcal{O}_K
%\]
%of $S$-algebras.

We identify $\mathcal O_K$ with $\mathcal O_F[x]/(f(x))$ as $\mathcal{O}_F$-algebras. 
Let $\alpha_1(f)$ (respectively, $\alpha_2(f)$) be the polynomial in $R[x]$ (respectively, $R[y]$) obtained from $f(x) \in R[x]$ (respectively, $f(y) \in R[y]$) by applying $\alpha_1$ (respectively, $\alpha_2$) to each coefficient. 
Then 
\[ R \otimes_{ \mathcal O_F, \alpha_1} \mathcal{O}_K \cong R [x]/ (\alpha_1(f) ), \quad R \otimes_{ \mathcal O_F, \alpha_2} \mathcal{O}_K \cong R [y]/ (\alpha_2(f) ) \] 
as $R$-algebras with an action of $G$, so it suffices to show that
\[ R[x]/ (\alpha_1(f)) \cong  R[y]/(\alpha_2(f)).\]

We will construct an isomorphism by mapping $y$ to an appropriately chosen root of $\alpha_2(f)$ in $R[x]/(\alpha_1(f))$.
Suppose now that $R$ is an integral domain with field of fractions $\tilde{R}$, and that $\alpha_1(t) \neq 0$. 
We show that $\alpha_2(f)$ has only finitely many roots in $R[x]/(\alpha_1(f))$.
Since $f$ is monic, the natural homomorphism of $R$-algebras $R[x]/(\alpha_1(f)) \to \tilde{R}[x]/(\alpha_1(f))$ is, on the level of $R$-modules, 
the natural inclusion of a free $R$-module of rank $e$ into an $e$-dimensional vector space over $\tilde{R}$, so it suffices to show that $\alpha_2(f)$ has only finitely many roots in $\tilde{R}[x]/(\alpha_1(f))$.

Our assumptions allows us to (uniquely) extend $\alpha_1$ to a homomorphism of fields $\beta_1 \colon F \to \tilde{R}$, 
so the $\tilde{R}$-algebra $\tilde{R}[x]/(\alpha_1(f))$ is the base change from $F$ to $\tilde{R}$ of the finite \'etale algebra $K$.
It follows that $\tilde{R}[x]/(\alpha_1(f))$ is a finite \'etale algebra over $\tilde{R}$, namely it is isomorphic to a finite direct product of (finite) field extensions of $\tilde{R}$.
Each such extension contains only finitely many elements where $\alpha_2(f)$ vanishes, so $\alpha_2(f)$ has only finitely many roots in $\tilde{R}[x]/(\alpha_1(f))$.
In particular, there are only finitely many isomorphism of $R$-algebras from $R \otimes_{ \mathcal O_F, \alpha_1} \mathcal O_K$ to $R \otimes_{ \mathcal O_F, \alpha_2} \mathcal O_K$.

If $s=0$ then $p$ does not divide $e$ and $\mathcal{O}_K \cong \mathcal{O}_F[x]/(x^e - t)$ as $\mathcal{O}_F$-algebras.
In this case $G$ is cyclic, and if we pick a generator $g$ of $G$, we get that $g(x) = \zeta_e x$ for a primitive root of unity $\zeta_e \in \mathcal{O}_F$ of order $e$.
Therefore
\[
R \otimes_{ \mathcal O_F, \alpha_1} \mathcal O_K \cong R[x]/(x^e - \alpha_1(t)), \quad R \otimes_{ \mathcal O_F, \alpha_2} \mathcal O_K \cong R[y]/(y^e - \alpha_2(t))
\]
as $R$-algebras with a $G$-action. 
By assumption there exists $\chi \in R^\times$ such that $\alpha_2(t) = \chi \cdot \alpha_1(t)$.
Using our assumption that $R$ is strictly Henselian, we can find $z \in R^\times$ such that $z^e = \chi$.
The $R$-algebra homomorphism induced by $y \mapsto zx$ gives the required $G$-equivariant isomorphism.

Assume from now on that $s>0$. In the polynomial ring $R[y]$ we have
\[
\alpha_2(f)(y) \equiv \alpha_1(f)(y) \mod \alpha_1(t)^{ \lceil s \rceil+1}.
\]
Since $u^e$ divides $t$ in $\mathcal{O}_K$, we get that $x^e$ divides $\alpha_1(t)$ in $R[x]/(\alpha_1(f))$, 
so from the congruence above we deduce that
\[
\alpha_2(f)(y) \equiv \alpha_1(f)(y) \mod x^{ e(\lceil s \rceil+1)}.
\]
in $\left( R[x]/(\alpha_1(f)) \right)[y]$.
\cref{pre-valuation-lemma} tells us that $e(s+1)$ is an integer (or equivalently that $s \in \frac{1}{e}\Z$), so the congruence above implies
\begin{equation} \label{PolynomialsThirdAlphaCongruence}
\begin{cases}
\alpha_2(f)(y) \equiv \alpha_1(f)(y) \mod x^{ e(s+1)} &s \in \Z \\
\alpha_2(f)(y) \equiv \alpha_1(f)(y) \mod x^{ e(s+1) + 1} &s \notin \Z.
\end{cases}
\end{equation}

It follows from \cref{valuation-lemma} that if we write 
\begin{equation*} \label{PrimeBjToDivisibility}
\alpha_1(f) (x+ y) = y^e + \sum_{j=1}^{e-1} b_j' y^j \in \big( R[x]/(\alpha_1(f)) \big) [y],
\end{equation*}
then $b_j'x^{j(l+1)}$ is divisible by $x^{ e(s+1) }$, it is divisible by $x^{ e(s + 1) + 1}$ unless $j$ is $1$ or a power of $p$, 
and it is $x^{ e(s + 1)}$ times a unit of $R[x]/(\alpha_1(f))$ if $j=1$. 
It follows from \cref{PolynomialsThirdAlphaCongruence} and \cref{HenselianFactors} that if we write 
\begin{equation*} 
\alpha_2(f) (x+ y) = \sum_{j=0}^{e} c_j y^j \in \big( R[x]/(\alpha_1(f)) \big) [y],
\end{equation*}
then the coefficients $c_0, \dots, c_e$ have the following properties.
\begin{itemize}

\item The element $c_j x^{j(l+1)}$ is divisible by $x^{e(s+1)}$ for every $0 \leq j \leq e$.

\item If $s \notin \Z$ then $c_0$ is divisible by $x^{e(s+1) + 1}$.

\item The element $c_j x^{j(l+1)}$ is divisible by $x^{e(s+1) + 1}$ for every $2 \leq j \leq e$ that is not a power of $p$.

\item There exists $\xi \in \big( R[x]/(\alpha_1(f)) \big)^\times$ such that $c_1 = \xi x^{e(s+1) - (l+1)}$.

\end{itemize}

We infer that the coefficient of $y$ in the polynomial 
\begin{equation} \label{eq-divided-polynomial} 
E(y) = \frac{ \alpha_2(f) ( x+ x^{l+1} y) }{  x^{e(s+1)} } \in \big( R[x]/(\alpha_1(f)) \big)[y]
\end{equation} 
is a unit, and for each $2 \leq j \leq e$ that is not a power of $p$, the coefficient of $y^j$ is divisible by $x$. 
If $s \notin \Z$, then the constant term of $E(y)$ is divisible by $x$ as well.
By \cref{HenselianFactors}, $R[x]/(\alpha_1(f))$ is a strictly Henselian local ring with maximal ideal containing $x$ and residue field $k$.
By assumption, the characteristic of the residue field $\mathcal{O}_F/(t)$ of $F$ is $p$, and $\alpha_1$ induces an injection of this residue field into $k$, 
so the characteristic of $k$ is $p$ as well.
As a result, the reduction $\overline E \in k[y]$ of $E$ is a polynomial with a nonzero constant derivative.
If $s \notin \Z$ we have $E(0) \equiv 0 \ \mathrm{mod} \ x$ and thus $\overline{E}(0) = 0$.
Since $R[x]/(\alpha_1(f))$ is Henselian, there is a unique root $r \in R[x]/(\alpha_1(f))$ of $E$ that reduces to $0 \in k$.
As $R[x]/(x, \alpha_1(f))$ is Henselian as well, we conclude that 
\begin{equation} \label{NonintegralScongruenceR}
r \equiv 0 \mod x.
\end{equation}

If $s \in \Z$, we note that $\overline E$ is a nonconstant separable polynomial over the separably closed field $k$, so it has a root $z \in k$.
As in the previous case, there is a unique root $r \in R[x]/(\alpha_1(f))$ of $E$ that reduces to $z \in k$.

%we have a decomposition
%\[
%R[x]/(\alpha_1(f)) \cong \prod_{\lambda \in \Lambda} A_\lambda
%\] 
%into a finite product of strictly Henselian local rings with maximal ideals $m_\lambda$ containing (the projection of) $x$.
%Abusing notation, we can view $E$ as a polynomial over $A_\lambda$ satisfying the properties above.
%
%In particular, the reduction $\overline E$ of $E$ mod $m_\lambda$ is a polynomial with a nonzero coefficient of $y$, say $\gamma$, over the separably closed residue field $k_\lambda$ of $A_\lambda$. Therefore $\overline E$ is separable, so it has a root $s_\lambda \in k_\lambda$.
%By assumption, the characteristic of $k_\lambda$ is $p$, so $E'(s_\lambda) = \gamma \neq 0$.
%Since $A_\lambda$ is Henselian, it follows that the polynomial $E$ has a unique root $r_\lambda \in A_\lambda$ that reduces to $s_\lambda$ mod $m_\lambda$.
%We let 
%\[
%r = (r_\lambda)_{\lambda \in \Lambda} \in R[x]/(\alpha_1(f))
%\] 
%be the root of $E$ corresponding to our choice of the $s_\lambda$. 

We claim that the homomorphism of $R$-algebras
\[ i \colon  R [y]/ (\alpha_2(f) ) \to R [x]/ (\alpha_1(f) )\]
that sends $y$ to $x + x^{l+1} r$ is an isomorphism. 
Since both $R$-algebras are free $R$-modules of rank $e$, it suffices to check that the induced $k$-linear map 
\[
\overline{i} \colon  k[y]/ (\overline{\alpha_2(f)} ) \to k[x]/ ( \overline{\alpha_1(f)} )
\]
is an isomorphism.
Since $f$ is an Eisenstein polynomial, we have 
\[
\overline{\alpha_1(f)}(x) = x^e, \quad \overline{\alpha_2(f)}(y) = y^e.
\]
Therefore, in the basis $1, y, \dots, y^{e-1}$ for the source and the basis $1, x, \dots, x^{e-1}$ for the target, 
the map $\overline{i}$ is represented by an upper-triangular matrix with diagonal entries $1$, so $\overline{i}$ is indeed an isomorphism.

To check that $i$ is $G$-equivariant, it suffices to check for each $\sigma \in G$ that $\sigma^{-1} \circ i \circ \sigma  = i  $. Since $R[y]/(\alpha_2(f))$ is generated over $R$ by $y$, it is enough to check that 
\begin{equation} \label{TheEquivarianceOfIunderG}
\sigma^{-1} (i( \sigma (y) )) = i(y).
\end{equation}  
Since $E(r) = 0$ we see that $i(y)$ is a root of $\alpha_2(f)$.

We claim that $\sigma^{-1}(i( \sigma(y) ))$ is also a root of $\alpha_2(f)$.
By definition, $G$ acts trivially on $R$ so this amounts to showing that
\begin{equation} \label{VanishingSeconRootSigma}
\alpha_2(f)(i( \sigma(y) )) = 0
\end{equation}
in $R[x]/(\alpha_1(f))$.
There exists a polynomial $P$ with coefficients in $\mathcal{O}_F$ such that $\sigma(y) = \alpha_2(P)$ and $f$ divides $f \circ P$.
We get
\[
\alpha_2(f)(i( \sigma(y) )) = \alpha_2(f)(i( \alpha_2(P) )) = \alpha_2(f)(\alpha_2(P)(i(y))) = \alpha_2(f \circ P)(i(y))
\]
so \cref{VanishingSeconRootSigma} follows from $i(y)$ being a root of $\alpha_2(f)$ and $f$ dividing $f \circ P$.

The homomorphism of $R$-algebars
\[
\widetilde{i} \colon R[y]/(\alpha_2(f),y^{l+1}) \to R[x]/(\alpha_1(f), x^{l+1})
\]
induced by $i$ sends $y$ to $x$ so
\[
i(y) \equiv x \mod x^{l+1}, \quad \sigma^{-1} (i( \sigma (y) )) \equiv x \mod x^{l+1}.
\]
Therefore in $R[x]/(\alpha_1(f))$ we have the two roots
\[
E\left( \frac{i(y)-x}{x^{l+1}} \right) = 0, \quad E\left( \frac{\sigma^{-1}(i(\sigma(y)))-x}{x^{l+1}} \right) = 0.
\]
Since $R[x]/(\alpha_1(f))$ is Henselian, in order to see that these roots are equal and arrive at \cref{TheEquivarianceOfIunderG},
it suffices to check that the roots are equal mod $x$ or equivalently 
\begin{equation} \label{LplusTwoCongruence}
\sigma^{-1}(i( \sigma (y) )) \equiv  i(y) \mod x^{l+2}.
\end{equation}

If $s \notin \Z$ then \cref{LplusTwoCongruence} follows immediately from \cref{NonintegralScongruenceR}, so the proof in this case is complete.
We assume henceforth that $s \in \Z$.

Suppose that $\sigma \in G_1$. In this case there exists a polynomial $w$ with coefficients  in $\mathcal{O}_F$ such that
\[
\sigma(x) = x + \alpha_1(w)x^2, \quad \sigma(y) = y + \alpha_2(w)y^2
\]
in $R[x]/(\alpha_1(f))$ and $R[y]/(\alpha_2(f))$ respectively.
We therefore have
\begin{equation*}
\begin{split}
i(\sigma(y)) &= i(y + \alpha_2(w)y^2) = x + x^{l+1}r + i(\alpha_2(w))(x + x^{l+1}r)^2 \\
&\equiv x + x^{l+1}r + \alpha_2(w)x^2 \mod x^{l+2}
\end{split}
\end{equation*}
and
\begin{equation*}
\begin{split}
\sigma(i(y)) &= \sigma(x + x^{l+1}r) = x + \alpha_1(w)x^2 + (x + \alpha_1(w)x^2)^{l+1} \sigma(r) \\
&\equiv x + \alpha_1(w)x^2 + x^{l+1}\sigma(r) \mod x^{l+2}.
\end{split}
\end{equation*}

As $G$ acts trivially on $R$ we have $\sigma(r) \equiv r \ \mathrm{mod} \ x$ so 
\[
x^{l+1}r \equiv x^{l+1}\sigma(r) \mod x^{l+2}.
\]
Hence, in order to show that \cref{LplusTwoCongruence} holds, it suffices to verify that for each $\beta \in \mathcal{O}_F$ we have 
$
\alpha_1(\beta) = \alpha_2(\beta)
$
in $R[x]/(\alpha_1(f),x^{l})$. This follows from the argument leading to \cref{PolynomialsThirdAlphaCongruence}, and the fact that $l \leq e(s+1)$ proven in \cref{pre-valuation-lemma}.

Suppose now that $\sigma \notin G_1$. In this case there exists $\zeta \in R^\times$ satisfying $\zeta^\tau = 1$ such that
\[
\sigma(x) \equiv \zeta x + \alpha_1(\omega)x^2, \quad \sigma(y) = \zeta y + \alpha_2(\omega) y^2
\]
for some polynomial $\omega$ over $\mathcal{O}_F$. We therefore have
\[
i(\sigma(y)) \equiv \zeta x + \zeta x^{l+1}r + \alpha_2(\omega) x^2 \mod x^{l+2}
\]
and
\[
\sigma(i(y)) \equiv \zeta x + \alpha_1(\omega)x^2 + \zeta^{l+1} x^{l+1} \sigma(r) \mod x^{l+2}.
\]
On top of the arguments used in the previous case, here we need the additional fact that $\zeta^l = 1$ which follows from \cref{IntegralityScor} as the latter tells us that $\tau \mid l$ because $s \in \mathbb{Z}$.
This concludes the proof of $G$-equivariance of $i$ in the case $s \in \Z$, and thus the entire argument. 
\end{proof}

%Thus $\sigma^{-1} (u) = \zeta^{-1} u $ plus a multiple of $u^2$, and 
%so
%\[ i( \sigma^{-1}(y)) \equiv \sigma^{-1}(x) + \zeta^{-1} x^{l+1} r \mod x^{l+2} \]
%\[ \sigma(i( \sigma^{-1} (y) ))  \equiv  x + \zeta^{l} x^{l+1} r \mod x^{l+2} .\]
%Thus, if $\zeta^l =1$, this is automatic, and if $\zeta^l \neq 1$, we must check that it is possible to choose $r \equiv 0 \mod x$, or in other words that the roots of \eqref{eq-divided-polynomial} mod $x$ include $0$. To do this, note that our calculations show that for any root $r$, $\zeta^l r$ is also a root, and since the degree is a power of $p$ and thus not divisible by the order of $\zeta^l$, this implies $0$ is also a root, as desired.

\begin{cor} \label{SummaryHenselianCor}

Let $F$ be the field of fractions of a Henselian discrete valuation ring $\mathcal{O}_F$ whose residue field is perfect and has prime characteristic $p$.
Let $K/F$ be a totally ramified Galois extension with Galois group $G$, let $\mathcal{O}_K$ be the valuation ring of $K$, and set
\begin{equation*} 
s = \sup \ \{r \geq -1 : G^r \neq 1\}, \quad G^r = \varprojlim_{U \lhd_o G} (G/U)^r.
\end{equation*}

Let $R$ be a strictly Henselian integral domain and let $\alpha_1, \alpha_2 \colon \mathcal O_F \to R$ be two homomorphisms of local rings. Let $t \in \mathcal{O}_F$ be a uniformizer, suppose that $\alpha_1(t) \neq 0$, and suppose that for each $x \in \mathcal O_F$ we have   
\[ \alpha_1(x) \equiv \alpha_2(x) \mod  \alpha_1(t)^{ 1 + \lceil s \rceil}. \]
If $s=0$, suppose also that $\alpha_1(t)$ and $\alpha_2(t)$ differ by multiplication by a unit.
Then $R \otimes_{ \mathcal O_F, \alpha_1} \mathcal O_K \cong R \otimes_{\mathcal O_F, \alpha_2} \mathcal O_K$ 
as $R$-algebras with an action of $G$.

\end{cor}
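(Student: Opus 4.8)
The plan is to bootstrap from the finite case \cref{galois-agreement-lemma}: write $K$ as the filtered union of its finite subextensions $K_U$, apply that lemma at each finite level to obtain a finite nonempty set $I_U$ of equivariant isomorphisms, and then show the $I_U$ form an inverse system of nonempty finite sets whose (nonempty) limit assembles into the desired global isomorphism. The delicate point will be making the transition maps $I_{U'} \to I_U$ well defined, which amounts to recovering the $\mathcal{O}_{K_U}$-level data as invariants at the $K_{U'}$ level.

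\textbf{Reduction to finite levels.} For $U \lhd_o G$ put $K_U = K^U$, a finite totally ramified Galois extension of $F$ with group $G/U$ and valuation ring $\mathcal{O}_{K_U}$; since the stabilizer of any element of $\mathcal{O}_K$ is open, $\mathcal{O}_K = \varinjlim_U \mathcal{O}_{K_U}$ over the directed set of open normal subgroups ordered by reverse inclusion, and hence $R \otimes_{\mathcal{O}_F,\alpha_i} \mathcal{O}_K = \varinjlim_U R \otimes_{\mathcal{O}_F,\alpha_i} \mathcal{O}_{K_U}$ for $i=1,2$. Writing $s_U = \sup\{r \geq -1 : (G/U)^r \neq 1\}$, the compatibility of the upper numbering with quotients makes $(G/U)^r$ a quotient of $G^r$, so $s_U \leq s$, whence $1 + \lceil s_U \rceil \leq 1 + \lceil s \rceil$ and the congruence hypothesis descends to $K_U/F$ (if $s = \infty$ the congruence holds modulo every power of $\alpha_1(t)$, so in particular modulo $\alpha_1(t)^{1+\lceil s_U\rceil}$). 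If $s = 0$ then $s_U \in \{-1, 0\}$: the case $s_U = -1$ gives $K_U = F$ with nothing to prove, and for $s_U = 0$ the extra hypothesis that $\alpha_1(t),\alpha_2(t)$ differ by a unit is inherited as well. Thus \cref{galois-agreement-lemma} applies to each $K_U/F$, and since $R$ is an integral domain with $\alpha_1(t) \neq 0$ it guarantees that the set $I_U$ of $G/U$-equivariant $R$-algebra isomorphisms $R \otimes_{\mathcal{O}_F,\alpha_1} \mathcal{O}_{K_U} \xrightarrow{\sim} R \otimes_{\mathcal{O}_F,\alpha_2} \mathcal{O}_{K_U}$ is finite and nonempty.

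\textbf{The inverse system.} For $U' \subseteq U$ I will restrict along $(U/U')$-invariants. The crux is the identity $\left(R \otimes_{\mathcal{O}_F,\alpha_i} \mathcal{O}_{K_{U'}}\right)^{U/U'} = R \otimes_{\mathcal{O}_F,\alpha_i} \mathcal{O}_{K_U}$. To prove it, let $\tilde R$ be the fraction field of $R$ and $\beta_i \colon F \to \tilde R$ the extension of $\alpha_i$ (which exists since $\alpha_1(t) \neq 0$); then $R \otimes_{\alpha_i} \mathcal{O}_{K_{U'}}$ embeds into $\tilde R \otimes_{\beta_i} K_{U'}$, whose $(U/U')$-invariants are $\tilde R \otimes_{\beta_i} K_U$ (invariants commute with the flat base change $F \to \tilde R$, and $K_{U'}^{U/U'} = K_U$). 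Choosing a uniformizer $\varpi$ of $K_{U'}$ so that $1, \varpi, \dots, \varpi^{[K_{U'}:K_U]-1}$ is at once an $\mathcal{O}_{K_U}$-basis of $\mathcal{O}_{K_{U'}}$ and a $K_U$-basis of $K_{U'}$, intersecting $\tilde R \otimes_{\beta_i} K_U$ back with $R \otimes_{\alpha_i} \mathcal{O}_{K_{U'}}$ returns exactly $R \otimes_{\alpha_i} \mathcal{O}_{K_U}$. Granting this, any $\phi' \in I_{U'}$ is in particular $(U/U')$-equivariant, so it carries the invariant subalgebra $R \otimes_{\alpha_1} \mathcal{O}_{K_U}$ into $R \otimes_{\alpha_2} \mathcal{O}_{K_U}$; running the same argument on $(\phi')^{-1}$ shows the restriction is again an isomorphism, and it is plainly $G/U$-equivariant, hence lies in $I_U$. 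Restriction is functorial in $U$, so $\{I_U\}_U$ is a cofiltered inverse system of nonempty finite sets.

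\textbf{Passage to the limit.} The inverse limit of nonempty finite sets over a directed poset is nonempty, so choose $(\phi_U)_U \in \varprojlim_U I_U$. Compatibility under restriction says exactly that the $\phi_U$ agree on the nested subalgebras $R \otimes_{\alpha_1} \mathcal{O}_{K_U}$, so they glue to an $R$-algebra isomorphism $\phi = \varinjlim_U \phi_U \colon R \otimes_{\mathcal{O}_F,\alpha_1} \mathcal{O}_K \xrightarrow{\sim} R \otimes_{\mathcal{O}_F,\alpha_2} \mathcal{O}_K$, with inverse $\varinjlim_U (\phi_U)^{-1}$; it is $G$-equivariant because any element of the source together with any $g \in G$ already lives at a finite level $G/U$, where $\phi_U$ is $G/U$-equivariant. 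As flagged above, the step I expect to require the most care is the invariants identification $\left(R \otimes_{\mathcal{O}_F,\alpha_i} \mathcal{O}_{K_{U'}}\right)^{U/U'} = R \otimes_{\mathcal{O}_F,\alpha_i} \mathcal{O}_{K_U}$: invariants do not commute with arbitrary base change and $[K_{U'}:K_U]$ need not be invertible in $R$, so it is essential here that $R$ is a domain with $\alpha_1(t) \neq 0$, which lets one argue inside the étale $\tilde R$-algebra $\tilde R \otimes_{\beta_i} K_{U'}$.
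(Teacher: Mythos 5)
Your proof is correct and is essentially the paper's own: reduce to finite levels via \cref{galois-agreement-lemma}, organize the resulting finite nonempty sets of equivariant isomorphisms into an inverse system via restriction to $(U/U')$-invariants, and conclude from the nonemptiness of an inverse limit of nonempty finite sets over a directed set. Two small remarks: your fraction-field argument for the identification $(R\otimes_{\mathcal O_F}\mathcal O_{K_{U'}})^{U/U'}=R\otimes_{\mathcal O_F}\mathcal O_{K_U}$ is more careful than the paper's terse appeal to freeness of $\mathcal O_{K_{U'}}$ over $\mathcal O_F$; and when $s>0$ but some intermediate level has $s_U=0$, one should still check the unit hypothesis of \cref{galois-agreement-lemma}, which holds automatically because the congruence modulo $\alpha_1(t)^{2}$ forces $\alpha_2(t)=\alpha_1(t)\bigl(1+\alpha_1(t)r\bigr)$ for some $r\in R$, and $1+\alpha_1(t)r$ is a unit of the local ring $R$.
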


\begin{proof}

For each open normal subgroup $U$ of $G$ denote by $K^U$ its fixed subfield of $K$ and by $\mathcal{O}_{K^U}$ the valuation ring of $K^U$.
By \cref{galois-agreement-lemma}, the set $\mathcal{I}_U$
of $G/U$-equivariant isomorphisms of $R$-algebras from $R \otimes_{ \mathcal O_F, \alpha_1} \mathcal O_{K^U}$ to $R \otimes_{ \mathcal O_F, \alpha_2} \mathcal O_{K^U}$ is finite and nonempty. 

If $V$ is an open normal subgroup of $G$ contained in $U$, 
we claim that restriction of isomorphisms to $R \otimes_{ \mathcal O_F, \alpha_1} \mathcal O_{K^U}$ defines a function from $\mathcal{I}_V$ to $\mathcal{I}_U$. Indeed, let 
\[
\varphi \colon R \otimes_{ \mathcal O_F, \alpha_1} \mathcal O_{K^V} \to 
R \otimes_{ \mathcal O_F, \alpha_2} \mathcal O_{K^V}
\]
be an isomorphism from $\mathcal{I}_V$. Since $\varphi$ is $G/V$-equivariant, it restricts to a $G/U$-equivariant isomorphism
\[
\varphi^{U/V} \colon \left( R \otimes_{ \mathcal O_F, \alpha_1} \mathcal O_{K^V} \right)^{U/V} \to 
\left( R \otimes_{ \mathcal O_F, \alpha_2} \mathcal O_{K^V} \right)^{U/V}
\] 
of $R$-algebras. 
Since $\mathcal{O}_{K^V}$ is a free module of finite rank over $\mathcal{O}_F$, the isomorphism above is (naturally identified with) the isomorphism
\[
\varphi^{U/V} \colon R \otimes_{ \mathcal O_F, \alpha_1} \mathcal O_{K^U} \to 
R \otimes_{ \mathcal O_F, \alpha_2} \mathcal O_{K^U}.
\]
That is, the restriction $\varphi^{U/V}$ of $\varphi$ to $R \otimes_{ \mathcal O_F, \alpha_1} \mathcal O_{K^U}$ lies in $\mathcal{I}_U$, as claimed.

We see that the $\mathcal{I}_U$, with the restriction functions above, form an inverse system (over the directed set of open normal subgroups of $G$) of nonempty finite sets. The inverse limit of this system is nonempty, and every member of this inverse limit gives rise to a $G$-equivariant isomorphism of $R$-algebras from $R \otimes_{ \mathcal O_F, \alpha_1} \mathcal O_K$ to $R \otimes_{ \mathcal O_F, \alpha_2} \mathcal O_K$.  
\end{proof}

\begin{proof}[Proof of \cref{TranslationInvarianceOnTheLine}] 

Suppose that all the slopes of the monodromy representation of $\mathcal F$ at $\infty$ are at most $1$.
We need to show that the sheaves $\pi_1^* \upsilon_! \mathcal F$ and $\textup{trans}^* \upsilon_! \mathcal F$ are isomorphic in some \'etale neighborhood of $(\infty,0)$ in $\mathbb P^1 \times \mathbb A^1$. It suffices to show that these sheaves become isomorphic after pullback to the spectrum of the strict henselization $R$ of the local ring of $\mathbb P^1 \times \mathbb A^1$ at $(\infty, 0)$, namely
\[
R = \mathcal{O}_{\mathbb{P}^1 \times \mathbb A^1, (\infty,0)}^{\text{sh}} = \mathcal{O}_{\mathbb{P}^1 \times \mathbb A^1, \overline{(\infty, 0)}}. 
\] 
In other words, $R$ is the \'etale local ring of $\mathbb P^1 \times \mathbb A^1$ at a geometric point over $(\infty, 0)$.  
We take $t$ to be the inverse of the standard coordinate on $\mathbb P^1$, namely we write $\mathbb{A}^1 = \Spec \overline{\F_q}[t^{-1}]$.

We have the commutative diagram
\[
\begin{tikzcd}
\Spec R[t^{-1}] \arrow[dd, "\beta_2"', bend left = 52] \arrow[r] \arrow[dd, "\beta_1", bend right = 52] \arrow[rrd, "\widetilde{\pi_1}" description, bend left = 7] \arrow[rrd, "\widetilde{\textup{trans}}" description, bend right = 7] & \Spec R \arrow[dd, "\Spec \alpha_2" near end, bend left = 55] \arrow[r] \arrow[dd, "\Spec \alpha_1"', bend right = 55]  & \mathbb{P}^1 \times \mathbb A^1 \arrow[d, dotted, "\textup{trans}", bend left = 15] \arrow[d, dotted, "\pi_1"', bend right = 15] \\ 
& & \mathbb{A}^1 \arrow[d,"v"] \\ 
\Spec \mathcal{O}_{\mathbb{P}^1, \overline{\infty}}[t^{-1}] \arrow[r] \arrow[rur, "\zeta"] & \Spec \mathcal{O}_{\mathbb{P}^1, \overline{\infty}} \arrow[r] & \mathbb{P}^1
\end{tikzcd}
\]
where $\alpha_1, \beta_1, \widetilde{\pi_1}$ correspond to $\pi_1$ and $\alpha_2, \beta_2, \widetilde{\textup{trans}}$ correspond to $\textup{trans}$.
Since pullback is compatible with extension by zero, it suffices to show that on $\operatorname{Spec} R[t^{-1}]$ we have
\[
\widetilde{\pi_1}^* \mathcal{F} \cong \widetilde{\textup{trans}}^* \mathcal{F}. 
\]
Commutativity of the diagram above gives
\[
\widetilde{\pi_1}^* \mathcal{F} \cong \beta_1^* \zeta^* \mathcal{F}, \quad \widetilde{\textup{trans}}^{*} \mathcal{F} \cong \beta_2^* \zeta^* \mathcal{F}.
\]

We denote by $F$ the (valued) field $\mathcal{O}_{\mathbb{P}^1, \overline{\infty}}[t^{-1}]$ whose (discrete) valuation ring is the Henselian local ring $\mathcal{O}_F = \mathcal{O}_{\mathbb{P}^1, \overline{\infty}}$. The sheaf $\zeta^* \mathcal{F}$ is a continuous finite-dimensional representation of the absolute Galois group $\Gamma$ of $F$ over $\overline{\mathbb{Q}_\ell}$. We let $N$ be the closed normal subgroup of $\Gamma$ consisting of all those elements of $\Gamma$ that act trivially on our representation,
and take $K$ to be the subfield of some separable closure of $F$ containing all the elements that are fixed by $N$. 
We also set $G = \Gamma/N = \mathrm{Gal}(K/F)$, and note that $K/F$ is totally ramified since the residue field $\overline{\F_q}$ of $F$ is algebraically closed.
In terms of the ramification groups in the upper numbering filtration on $G$, our assumption that all the slopes of $\mathcal{F}$ at $\infty$ are at most $1$ means that
\begin{equation*} \label{BoundForSlopeInImage}
\sup \ \{r \geq -1 : G^r \neq 1\} \leq 1.
\end{equation*}

Denoting by $y$ the coordinate on the second copy of $\mathbb A^1$, we see that 
\[
\alpha_1(t) = t, \quad \alpha_2(t) = \frac{t}{1+yt}
\]
so $\alpha_1(t)$ and $\alpha_2(t)$ differ by multiplication by the unit $1 + yt$ of $R$, and
\[
\alpha_1(t) - \alpha_2(t) = t - \frac{t}{1+yt} = \frac{yt^2}{1+yt} \equiv 0 \mod t^2.
\]
Since the $\overline{\F_q}$-algebra 
$
\mathcal{O}_{\mathbb{P}^1, \overline{\infty}}/(t^2) = \overline{\F_q}[t]/(t^2)
$
is generated by $t$, in the commutative diagram
\begin{equation*} 
\begin{tikzcd}  
\mathcal{O}_{\mathbb{P}^1, \overline{\infty}} \arrow[r, ""]  \arrow[d,"\alpha_1"', bend right] \arrow[d,"\alpha_2", bend left] & \mathcal{O}_{\mathbb{P}^1, \overline{\infty}}/(t^2)  \arrow[d,"\overline{\alpha_1}"', bend right] \arrow[d,"\overline{\alpha_2}", bend left]  \\
R \arrow[r, ""]  & R/(t^2) 
\end{tikzcd}
\end{equation*}
we have $\overline{\alpha_1} = \overline{\alpha_2}$, so for every $x \in \mathcal{O}_{\mathbb{P}^1, \overline{\infty}}$ we get that
$
\alpha_1(x) \equiv \alpha_2(x) \ \mathrm{mod} \ t^2.
$

Since $R$ is an integral domain, we get from \cref{SummaryHenselianCor} that
\[ R \otimes_{ \mathcal O_F, \alpha_1} \mathcal O_K \cong R \otimes_{\mathcal O_F, \alpha_2} \mathcal O_K\]
as $R$-algebras with an action of $G$. From this we conclude that
\[
R[\lambda^{-1}] \otimes_{ F, \beta_1} K \cong R[\lambda^{-1}] \otimes_{F, \beta_2} K
\]
as $R[\lambda^{-1}]$-algebras with a $G$-action, so we have the pair of Cartesian squares
\begin{equation*} 
\begin{tikzcd}  
\Spec R[\lambda^{-1}]  \otimes_{F} K  \arrow[r,"\overline \epsilon"]  \arrow[d,"\beta_1^K"', bend right] \arrow[d,"\beta_2^K", bend left]  & \Spec R[\lambda^{-1}]  \arrow[d,"\beta_1"', bend right] \arrow[d,"\beta_2", bend left]  \\
\Spec K \arrow[r, "\epsilon"]  & \Spec F
\end{tikzcd}
\end{equation*}
having the horizontal maps $\epsilon$ and $\overline \epsilon$ in common.

In order to deduce that
\[
\beta_1^* \zeta^* \mathcal{F} \cong \beta_2^* \zeta^* \mathcal{F}
\]
as sheaves on $\Spec R[\lambda^{-1}]$, it is therefore enough to show that 
\[
\overline{\epsilon}^* \beta_1^* \zeta^* \mathcal{F} \cong \overline{\epsilon}^* \beta_2^* \zeta^* \mathcal{F}
\]
as constant sheaves on $\Spec R[\lambda^{-1}]  \otimes_{F} K $ (equivalently, vector spaces over $\overline{\mathbb{Q}_\ell}$) with an action of $G$. 
This follows from the fact that pulling back the constant sheaf $\epsilon^* \zeta^* \mathcal{F}$ (with its $G$-action) by $\beta_1^K$ is the same thing as pulling it back by $\beta_2^K$.
The proof of one implication is thus complete.

Suppose now that $\mathcal F$ is translation-invariant at $\infty$. \cite[Definition 2.4.2.3]{Lau} introduces a functor $\mathscr F_\psi^{\infty, \infty}$ from the category of representations of the absolute Galois group $\Gamma$ of the local field $F$ to itself. From the definition of this functor, one can see that it intertwines shifting by $a \in \overline{\F_q}$ with tensoring by the Artin--Schreier sheaf $\mathcal L_\psi( a t^{-1})$, namely
\[ \mathscr F_\psi^{\infty, \infty} (  [ t^{-1} \mapsto t^{-1} + a]^* \mathcal F) = \mathscr F_\psi^{\infty, \infty} ( \mathcal F) \otimes \mathcal L_\psi(a t^{-1}).\]
To see this, we state in our notation Laumon's definition of $\mathscr F_\psi^{\infty, \infty}$. Let $j \colon \operatorname{Spec} F \to \operatorname{Spec} \mathcal O_F$ be the open immersion, and $\textup{pr} \colon \operatorname{Spec} \mathcal O_F \otimes_{\overline{\F_q}} \mathcal O_F \to  \operatorname{Spec} \mathcal O_F$ the projection onto the first factor. Let $t_1$ and $t_2$ be the variable $t$ in $\mathcal O_F$ viewed as an element of $ \mathcal O_F \otimes_{\overline{\F_q}} \mathcal O_F$ by the map coming respectively from the first and second tensor power.   Let $u\colon \operatorname{Spec} F \otimes_{\overline{\F_q}} F \to \operatorname{Spec} \mathcal O_F \otimes_{\overline{\F_q}} \mathcal O_F$ be the natural open immersion.

Then \cite[Definition 2.4.2.3]{Lau} puts
\[  \mathscr F_\psi^{\infty, \infty} (  \mathcal F)= R^1 \Phi \left(  \textup{pr}^*  j_! \mathcal F \otimes  u_! \mathcal L_\psi\left( \frac{1}{t_1t_2} \right) \right)\]
where we take vanishing cycles on $ \operatorname{Spec} \mathcal O_F \otimes_{\overline{\F_q}} \mathcal O_F$ with respect to the map to $\operatorname{Spec} \mathcal O_F$ via the projection onto the second factor.

Now we have
\begin{equation*}
\begin{split}
\mathscr F_\psi^{\infty, \infty} ( [t^{-1} \mapsto t^{-1}+a]^* \mathcal F) &= R^1 \Phi \left(  \textup{pr}^*  j_! [t^{-1} \mapsto t^{-1}+a]^* \mathcal F \otimes  u_! \mathcal L_\psi \left(\frac{1}{t_1t_2}  \right) \right) \\
&= R^1 \Phi \left(  \textup{pr}^* \left[t \mapsto \frac{t}{1+at} \right]^*  j_!  \mathcal F \otimes  u_! \mathcal L_\psi \left( \frac{1}{t_1t_2}   \right) \right)
\\
&= R^1 \Phi \left(  \left[t_1 \mapsto \frac{t_1}{1+at_1}, t_2 \mapsto t_2 \right]^* \textup{pr}^*   j_!  \mathcal F \otimes  u_! \mathcal L_\psi \left( \frac{1}{t_1t_2}  \right) \right) \\
&= R^1 \Phi \left(  \left[t_1 \mapsto \frac{t_1}{1+at_1}, t_2 \mapsto t_2 \right]^* \left(  \textup{pr}^*   j_!  \mathcal F \otimes  [t_1 \mapsto \frac{t_1}{1+at_1}, t_2 \mapsto t_2 ]_* u_! \mathcal L_\psi \left( \frac{1}{t_1t_2}   \right) \right) \right) \\
&= R^1 \Phi \left( \textup{pr}^*   j_!  \mathcal F \otimes  \left[t_1 \mapsto \frac{t_1}{1+at_1}, t_2 \mapsto t_2 \right]_* u_! \mathcal L_\psi \left( \frac{1}{t_1t_2}  \right) \right) \\
&= R^1 \Phi \left(\textup{pr}^*   j_!  \mathcal F \otimes  u_!  [t_1^{-1}  \mapsto t_1^{-1} +a_1 , t_2 \mapsto t_2 ]_* \mathcal L_\psi \left(\frac{1}{t_1t_2} \right) \right) \\
&= R^1 \Phi \left( \textup{pr}^*   j_!  \mathcal F \otimes  u_!  \mathcal L_\psi \left( \left( \frac{1}{t_1}+a \right) \frac{1}{t_2}     \right) \right) \\
&= R^1 \Phi \left(\textup{pr}^*   j_!  \mathcal F \otimes  u_!  \mathcal L_\psi  \left( \frac{1}{t_1 t_2}  + \frac{a}{t_2}    \right)  \right) \\
&= R^1 \Phi \left(\textup{pr}^*   j_!  \mathcal F \otimes  u_!  \mathcal L_\psi \left(  \frac{1}{t_1 t_2}    \right) \otimes \mathcal L_\psi \left( \frac{a}{t_2} \right) \right) \\
&=  R^1 \Phi \left(\textup{pr}^*   j_!  \mathcal F \otimes  u_!  \mathcal L_\psi \left( \frac{1}{t_1 t_2}    \right)  \right) \otimes \mathcal L_\psi \left( \frac{a}{t} \right)
=   \mathscr F_\psi^{\infty, \infty} (  \mathcal F) \otimes \mathcal L_\psi \left( \frac{a}{t} \right)
\end{split} 
\end{equation*}
where we use crucially two properties of vanishing cycles: in the fourth equality, we use that the vanishing cycles of a sheaf on  $ \operatorname{Spec} \mathcal O_F \otimes_{\overline{\F_q}} \mathcal O_F$ with respect to the map to $\operatorname{Spec} \mathcal O_F$ are preserved by pulling back the sheaf under an automorphism of $ \operatorname{Spec} \mathcal O_F \otimes_{\overline{\F_q}} \mathcal O_F$  that fixes the projection to $\operatorname{Spec} \mathcal O_F$, and in the ninth equality, we use that taking the tensor product of a sheaf on $ \operatorname{Spec} \mathcal O_F \otimes_{\overline{\F_q}} \mathcal O_F$ whose special fiber is zero with the pullback of a lisse sheaf on the generic point of  $\operatorname{Spec} \mathcal O_F$ has the effect on the vanishing cycles of a sheaf with respect to the map to  $\operatorname{Spec} \mathcal O_F$ of tensoring with that same lisse sheaf. Both properties are immediate from the definition of vanishing cycles. 

Therefore our assumption that $\mathcal F$ is invariant under translation by $a$ for all $a \in \overline{\F_q}$ implies that $\mathscr F_\psi^{\infty,\infty}(\mathcal F)$ is invariant under tensor product with $\mathcal L_\psi( \frac{a}{t} )$ for all $a \in \overline{\F_q}$. However, a nonzero $d$-dimensional representation of a group may only be invariant under tensor product by at most $d^2$ one-dimensional representations of the group, since if $V \otimes \mathcal L \cong V$ then $\mathcal L$ is a summand of $V \otimes V^\vee$ unless $V=0$ and there can be at most $\dim (V \otimes V^\vee)=d^2$ such summands. Since all the Artin--Schreier sheaves give distinct representations of $\Gamma$, and  $\mathscr F_\psi^{\infty,\infty}(\mathcal F)$ is finite-dimensional, it follows that $\mathscr F_\psi^{\infty,\infty}(\mathcal F)=0$.

However, by \cite[Theorem 2.4.3.(iii).a) and c)]{Lau}, the functor $\mathscr F_\psi^{\infty,\infty}$ is exact, and, restricted to the category of representations $\Gamma$ with slope $>1$, is an equivalence of categories from the category of representations with slope $>1$ to itself. Hence if the local mondoromy representation of $\mathcal F$ contains a nontrivial subrepresentation with slope $>1$, $\mathscr F_\psi^{\infty,\infty}(\mathcal F)$ contain $\mathscr F_\psi^{\infty,\infty}$ of that subrepresentation, which is nonzero since an equivalence of categories cannot send a nonzero object to a zero object. Since $\mathscr F_\psi^{\infty,\infty}(\mathcal F)=0$, all slopes of $\mathcal F$ at infinity must be $\leq 1$, as desired.\end{proof}

\section{Vanishing of Cohomology} \label{VanishingOfCohomologySection}

%The goal of this section is to prove \cref{MainRes}, in parallel to the proof of \cite[Theorem 1.10]{SS20} in Section 3 of that paper.
%Under the assumptions of \cref{MainRes}, we will obtain the more precise estimate
%\begin{equation} \label{PreciseMainRes}
%\Bigl| \sum_{\substack{f \in \mathbb F_q[u] \\ \deg (f) < n} }  t(f) \Bigr|  \leq   q^{ \frac{n}{2} + \frac{1}{2} }   
%\Bigl( \prod_{\pi \mid g} \left( r(t_{\pi} )(1+Z)  + c(t_{\pi}) Z \right)^{\deg (\pi)}\Bigr ) [Z^n]
%\end{equation}
%where $n < \deg(g)$ and $[Z^n]$ extracts the coefficient of $Z^n$ from a polynomial.
%Here $r(t_\pi)$ (respectively, $c(t_\pi)$) is the rank (respectively, conductor) of the trace function $t_\pi$, defined in \cite[Definition 1.4, eq. (1.17)]{SS20}.
%\cref{PreciseMainRes} is our generalization of \cite[Corollary 3.15]{SS20}, which makes more restrictive assumptions on $t$.
%\cref{MainRes} is deduced from \cref{PreciseMainRes} using the argument on \cite[p. 56]{SS20}.

Let $m$ be a positive integer and let $g \in \overline{\F_q}[u]$ be a squarefree polynomial of degree $m$.
We view $\mathbb A^m$ as parametrizing polynomials in a single variable $u$ over $\overline {\F_q}$ of degree less than $m$ via their coefficients.
We can also think of $\mathbb A^m$ as parametrizing residue classes modulo $g$.
Let $\overline{e}_i \colon \mathbb A^m \to \mathbb A^{1}$ for $1 \leq i \leq m$ be the map that evaluates a polynomial at $x_i$, and for constructible \'etale $\overline{\mathbb Q_\ell}$-sheaves $\mathcal F_1, \dots, \mathcal F_m$ on $\mathbb A^1$ set
\[
\overline{\mathcal F} = \bigotimes_{i=1}^m \overline{e}_i^* \mathcal F_i.
\]
Let $\pi \colon \mathbb A^m \to \mathbb A^{m-n}$ denote the projection onto the coefficients of $u^{n}, \dots, u^{m-1}$, so that $\pi^{-1}(0)$ is the space $\mathbb{A}^n$ of polynomials of degree less than $n$. We therefore view $\mathbb{A}^{m-n}$ as the space of polynomials of degree less than $m$ modulo the subspace of polynomials of degree less than $n$, or as a space parametrizing intervals modulo $g$ of a given length.  
Let $e_i$ be the restriction of $\overline{e}_i$ to $\mathbb A^n = \pi^{-1}(0)$, and let the sheaf
\[
\mathcal F = \bigotimes_{i=1}^m e_i^* \mathcal F_i.
\]
be the pullback of the sheaf $\overline{\mathcal F}$ under the inclusion of $\mathbb A^n = \pi^{-1}(0)$ into $\mathbb A^m$.
Our goal is to show in \cref{TechHeart} that, under suitable assumptions on $\mathcal F_1$, \dots, $\mathcal F_m$, the compactly supported \'etale cohomology of $\mathcal F$ vanishes in degrees $j > n+1$.

From the proper base change theorem over the Cartesian square
\begin{equation} \label{SmallFiberCartesianSquare}
\begin{tikzcd}  
\mathbb A^{n} \arrow[d,""] \arrow[r, hook, ""] & \mathbb A^{m} \arrow[d, "\pi"] \\
\operatorname{Spec} \overline{\F_q} \arrow[r, "0"] & \mathbb A^{m-n} 
\end{tikzcd}
\end{equation}
we get that  $H^j_c(\mathbb A^n, \mathcal F)$ is isomorphic to the stalk of $R^j \pi_! \overline{\mathcal F}$ at (a geometric point over) $0 \in \mathbb{A}^{m-n}$. 
In order to prove \cref{TechHeart} it is therefore sufficient to show that this stalk vanishes for every $j > n + 1$.

\begin{lem}\label{generic-step} 

Let $\overline{\xi}$ be a geometric generic point of $\mathbb A^{m-n}$. 
For every $1 \leq i \leq m$ suppose that $\mathcal F_i$ has no finitely supported sections, and that its (geometric) global monodromy has neither Artin--Schreier nor trivial factors.
Then for every integer $j > n$ we have 
\[
(R^j \pi_! \overline{\mathcal F})_{\overline{\xi}} = 0.
\]

\end{lem}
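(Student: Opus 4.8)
The plan is to compute this stalk by means of the $\ell$-adic Fourier transform. Evaluation at the $m$ roots $x_1,\dots,x_m$ of $g$ (which are distinct, as $g$ is squarefree) defines a linear isomorphism $\overline e=(\overline e_1,\dots,\overline e_m)\colon\mathbb A^m\xrightarrow{\ \sim\ }\mathbb A^m$ carrying $\overline{\mathcal F}$ to the external tensor product $\mathcal F_1\boxtimes\cdots\boxtimes\mathcal F_m$ and carrying the coordinate projection $\pi$ to a surjective linear map $L\colon\mathbb A^m\twoheadrightarrow\mathbb A^{m-n}$. Thus $R\pi_!\overline{\mathcal F}\cong RL_!(\mathcal F_1\boxtimes\cdots\boxtimes\mathcal F_m)$, and it is enough to show that the stalk of the right-hand side at $\overline\xi$ is concentrated in cohomological degrees $\le n$.

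First I would compute $\mathrm{FT}_\psi\bigl(RL_!(\mathcal F_1\boxtimes\cdots\boxtimes\mathcal F_m)\bigr)$. By \cite[Theorem 2 (1)]{Trav}, the hypotheses on each $\mathcal F_i$ --- no finitely supported sections, and no Artin--Schreier or trivial factors in the geometric global monodromy, which are exactly what make $H^0_c(\mathbb A^1,\mathcal F_i\otimes\mathcal L_\psi(\alpha x))$ and $H^2_c(\mathbb A^1,\mathcal F_i\otimes\mathcal L_\psi(\alpha x))$ vanish for all $\alpha\in\overline{\F_q}$ --- force $\mathrm{FT}_\psi(\mathcal F_i)$ to be a single constructible sheaf on $\mathbb A^1$ placed in degree $0$. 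By compatibility of the Fourier transform with external tensor products, $\mathrm{FT}_\psi(\mathcal F_1\boxtimes\cdots\boxtimes\mathcal F_m)=\mathrm{FT}_\psi(\mathcal F_1)\boxtimes\cdots\boxtimes\mathrm{FT}_\psi(\mathcal F_m)$ is a single constructible sheaf $\mathcal G$ on $\widehat{\mathbb A^m}$ in degree $0$; and by the behaviour of the Fourier transform under the linear map $L$ (\cite[Proposition 1.2.2.7]{Lau}), $\mathrm{FT}_\psi\bigl(RL_!(\mathcal F_1\boxtimes\cdots\boxtimes\mathcal F_m)\bigr)$ equals $(L^\vee)^*\mathcal G$ shifted by $[\dim\mathbb A^{m-n}-\dim\mathbb A^m]=[-n]$ and Tate twisted. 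Since $L^\vee\colon\widehat{\mathbb A^{m-n}}\hookrightarrow\widehat{\mathbb A^m}$ is a linear embedding, $\mathcal M:=(L^\vee)^*\mathcal G$ is again a constructible sheaf in degree $0$ on $\widehat{\mathbb A^{m-n}}$, so $\mathrm{FT}_\psi(R\pi_!\overline{\mathcal F})\cong\mathcal M[-n]$ up to a twist.

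The next step is Fourier inversion: up to a sign and a twist, $R\pi_!\overline{\mathcal F}\cong\mathrm{FT}_{\psi^{-1}}\bigl(\mathrm{FT}_\psi(R\pi_!\overline{\mathcal F})\bigr)\cong\mathrm{FT}_{\psi^{-1}}(\mathcal M)[-n]$, where $\mathrm{FT}_{\psi^{-1}}(\mathcal M)$ is a complex on $\widehat{\widehat{\mathbb A^{m-n}}}=\mathbb A^{m-n}$. Passing to stalks at $\overline\xi$ and tracking the shift, one gets $\mathcal H^{j}\bigl((R\pi_!\overline{\mathcal F})_{\overline\xi}\bigr)\cong\mathcal H^{\,j-n}\bigl(\mathrm{FT}_{\psi^{-1}}(\mathcal M)\bigr)_{\overline\xi}$ up to twist (and up to replacing $\overline\xi$ by another geometric generic point, which is harmless), so the lemma reduces to the vanishing $\mathcal H^{j'}\bigl(\mathrm{FT}_{\psi^{-1}}(\mathcal M)\bigr)_{\overline\xi}=0$ for every $j'>0$. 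For this, note that $\mathcal M$ is a constructible sheaf on the $(m-n)$-dimensional variety $\widehat{\mathbb A^{m-n}}$, so the shifted complex $\mathcal M[m-n]$ satisfies the support condition of the perverse $t$-structure, i.e.\ $\mathcal M[m-n]\in{}^pD^{\le 0}$. Since the Fourier transform is $t$-exact for the perverse $t$-structure (Katz--Laumon; see \cite{Lau}), also $\mathrm{FT}_{\psi^{-1}}(\mathcal M)[m-n]\in{}^pD^{\le 0}$, whence $\dim\operatorname{supp}\mathcal H^{j'}\bigl(\mathrm{FT}_{\psi^{-1}}(\mathcal M)\bigr)\le(m-n)-j'$ for every $j'$. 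When $j'\ge 1$ this bound is strictly below $\dim\mathbb A^{m-n}$, so the support is a proper closed subscheme and in particular does not contain the generic point that $\overline\xi$ lies over; hence the stalk there vanishes, as required.

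The substantive ingredients are Laumon's computation of the Fourier transform of a linear pushforward, the $t$-exactness of $\mathrm{FT}_\psi$, and the fact that the hypotheses on the $\mathcal F_i$ force $\mathrm{FT}_\psi(\mathcal F_i)$ into a single cohomological degree. Granting these, the only delicate point --- and the place I would expect to spend the most care --- is the bookkeeping of the cohomological shifts (and Tate twists) through the three uses of the Fourier transform, so that the final degree threshold comes out as exactly $n$ and not $n\pm 1$.
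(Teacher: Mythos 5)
Your proposal is correct and follows essentially the same route as the paper's proof. Both arguments establish the lemma by showing that the Fourier transform of $R\pi_!\overline{\mathcal F}$ is, up to shift and twist, a sheaf in a single cohomological degree on a variety of dimension $m-n$ --- using \cite[Theorem 2 (1)]{Trav} together with the hypotheses on the $\mathcal F_i$ to place each $\mathrm{FT}_\psi(\mathcal F_i)$ in degree $0$ --- and then conclude by the Katz--Laumon preservation of semiperversity; the only difference is organizational, in that you pass to evaluation coordinates via the Lagrange-interpolation isomorphism at the outset and invoke the abstract compatibility of $\mathrm{FT}_\psi$ with $\boxtimes$ and with pushforward along a surjective linear map, whereas the paper stays in polynomial coordinates and unpacks the Fourier transform explicitly by choosing a residue-theorem bilinear form and applying proper base change, the projection formula, and K\"unneth (which amounts to the same computation). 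One small bibliographic point: \cite[Proposition 1.2.2.7]{Lau} concerns additive convolution rather than linear functoriality of the Fourier transform, so that specific citation should be replaced by the appropriate statement.
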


\begin{proof} 

By the definition of semiperversity, it suffices to show that $  R\pi_! \overline{\mathcal F} [m]$ is semiperverse. 
As an operation on $D_c^b(\mathbb{A}^{m-n}, \overline{\mathbb{Q}_\ell})$, the Fourier transform $\mathrm{FT}_\psi$ preserves semiperversity, 
and $\mathrm{FT}_\psi \circ \mathrm{FT}_{\overline \psi}$ is the identity, so it suffices to show that  
$ \mathrm{FT}_\psi  R\pi_! \overline{\mathcal F} [m]$ is semiperverse.

Taking $\text{pr}_1, \text{pr}_2 \colon \mathbb A^{m-n} \times \mathbb A^{m-n} \to \mathbb A^{m-n}$ to be the projection maps, and $b \colon \mathbb A^{m-n} \times \mathbb A^{m-n} \to \mathbb A^1$ to be a nondegenerate bilinear form, we get from the definition of the Fourier transform that
\[ \mathrm{FT}_\psi  R\pi_! \overline{\mathcal F} [m] =  
R\text{pr}_{2!}\left( \mathcal L_\psi(b)\otimes  \text{pr}_1^* R\pi_! \overline{\mathcal F} \right) [2m-n]     .\]

Let $\text{pr}_1' \colon \mathbb{A}^m \times \mathbb{A}^{m-n} \to \mathbb{A}^m$ and $\text{pr}_2' \colon \mathbb{A}^m \times \mathbb{A}^{m-n} \to \mathbb{A}^{m-n}$ be the projections. Setting $\pi' = \pi \times \mathrm{id}$ we get the Cartesian square
\[ \begin{tikzcd}  \mathbb A^m \times \mathbb A^{m-n} \arrow[d,"\text{pr}_1'"] \arrow[r, "\pi'"] & \mathbb A^{m-n} \times \mathbb A^{m-n}  \arrow[d, "\text{pr}_1"] \\
\mathbb A^m \arrow[r, "\pi"] & \mathbb A^{m-n} \end{tikzcd}\]
so from proper base change and the projection formula we get
\begin{equation*}
\begin{split}
R\text{pr}_{2!}\left( \mathcal L_\psi(b)\otimes  \text{pr}_1^* R\pi_! \overline{\mathcal F} \right) [2m-n] &= 
R\text{pr}_{2!}  \left(  \mathcal L_\psi(b) \otimes  R\pi'_{!} \text{pr}_1^{'*}  \overline{\mathcal F} \right)[2m-n] \\ &=
R \text{pr}_{2!}   R \pi'_{!} \left( \mathcal L_\psi( b \circ \pi')   \otimes \text{pr}_1^{'*} \overline{\mathcal F}  \right) [2m-n].
\end{split}
\end{equation*}

Now we can view the second $\mathbb A^{m-n}$ as the space of polynomials over $\overline{\F_q}$ of degree less than $m-n$, at which point we can choose our bilinear form $b$ to be multiplication of polynomials modulo $g$ followed by extracting the coefficient of $u^{m-1}$, since this is a nondegenerate bilinear form between 
polynomials of degree less than $m$ modulo polynomials of degree less than $n$ and polynomials of degree less than $m-n$.
Equivalently, using the residue theorem, we can write
\[ b \circ \pi' ( a_0,\dots, a_{m-1} ;  \tilde{a}_0,\dots, \tilde{a}_{m-n-1} ) \] 
as
\begin{equation*}
\begin{split}
&\operatorname{Res}_{\infty}  \frac{ \left( a_0 + a_1 u + \dots + a_{m-1} u^{m-1}\right)  \left( \tilde{a}_0 + \tilde{a}_1 u + \dots + \tilde{a}_{m-n-1} u^{m -n-1}\right ) }{ g} = \\ 
&- \sum_{i=1}^m  \operatorname{Res}_{x_i}   \frac{ \left( a_0 + a_1 u + \dots + a_{m-1} u^{m-1}\right)  \left( \tilde{a}_0 + \tilde{a}_1 u + \dots + \tilde{a}_{m-n-1} u^{m -n-1}\right)  }{ g}. 
\end{split}
\end{equation*}

We conclude that
\begin{equation*}
\begin{split}
\mathcal L_\psi( b \circ \pi')  &= \bigotimes_{i=1}^m \mathcal L_\psi \left(  \operatorname{Res}_{x_i}   \frac{ \left( a_0 + a_1 u + \dots + a_{m-1} u^{m-1}\right)  \left( \tilde{a}_0 + \tilde{a}_1 u + \dots + \tilde{a}_{m-n-1} u^{m -n-1}\right)  }{ g}\right) \\
&= \bigotimes_{i=1}^m  (\overline{e}_i \times \tilde{e}_i)^* \mathcal L_\psi (b_i)  
\end{split}
\end{equation*} 
where $\tilde{e}_i \colon \mathbb A^{m-n} \to \mathbb A^{1}$ is the map evaluating at $x_i$, and $b_i$ is the bilinear form defined on scalars by
\[
b_i(\alpha, \beta) = \operatorname{Res}_{x_i} \frac{ \alpha \beta}{ g} = \frac{\alpha \beta}{g'(x_i)}.
\] 
Therefore
\begin{equation*}
\begin{split}
R \text{pr}_{2!}   R \pi'_{!} \left( \mathcal L_\psi( b \circ \pi')   \otimes \text{pr}_1^{'*} \overline{\mathcal F}  \right) [2m-n]  &= 
R(\text{pr}_2 \circ \pi'  )_{!} \bigotimes_{i=1}^m  \left(    (\overline{e}_i \times \tilde{e}_i)^* \mathcal L_\psi (b_i)  \otimes \text{pr}_{1}^{'*}  \overline{e}_i^* \mathcal F_i \right) [2m-n]  \\
&=R\text{pr}'_{2!} \bigotimes_{i=1}^m  \left(    (\overline{e}_i \times \tilde{e}_i)^* \mathcal L_\psi (b_i)  \otimes (\overline{e}_i \circ \text{pr}_1')^* \mathcal F_i \right) [2m-n].
\end{split}
\end{equation*}

Let $\widetilde{\text{pr}}_{1} \colon  \mathbb A^{1} \times \mathbb A^{m-n} \to  \mathbb A^{1}$ and $\widetilde{\text{pr}}_{2} \colon \mathbb A^{1} \times \mathbb A^{m-n} \to \mathbb A^{m-n}$ be the projections. We can view $\text{pr}_2'$ as the $m$th fibered power over $\mathbb{A}^{m-n}$ of the projection $\widetilde{\text{pr}}_{2} $, because each polynomial in $\mathbb A^m$ is uniquely determined by its values at $x_1, \dots, x_m$ by Lagrange's interpolation.
Since 
$
\overline{e}_i \circ \text{pr}_1' =  \widetilde{\text{pr}}_{1} \circ (\overline{e}_i \times \mathrm{id}),
$
by applying the $m$-fold relative K\"unneth formula, we obtain
\begin{equation*}
R\text{pr}'_{2!} \bigotimes_{i=1}^m  \left(    (\overline{e}_i \times \tilde{e}_i)^* \mathcal L_\psi (b_i)  \otimes (\overline{e}_i \circ \text{pr}_1')^* \mathcal F_i \right) [2m-n] = 
\bigotimes_{i=1}^m R \widetilde{\text{pr}}_{2!} \left(   (\mathrm{id} \times \tilde{e}_i)^* \mathcal L_\psi ( b_i)  \otimes  \widetilde{\text{pr}}_{1}^* \mathcal F_i \right) [2m-n].
\end{equation*}

Let $q_{1}, q_{2} \colon \mathbb A^{1} \times \mathbb A^{1} \to \mathbb A^{1}$ be the projections. These give rise to the Cartesian square
\[ \begin{tikzcd}  
\mathbb A^{1} \times \mathbb A^{m-n} \arrow[d,"\widetilde{\text{pr}}_{2}"] \arrow[r, "\mathrm{id} \times \tilde{e}_i"] & \mathbb A^{1} \times \mathbb A^{1}  \arrow[d, "q_{2}"] \\
\mathbb A^{m-n} \arrow[r, "\tilde{e}_i"] & \mathbb A^{1} \end{tikzcd}\]
so the fact that $q_{1} \circ (\mathrm{id} \times \tilde{e}_i) = \widetilde{\text{pr}}_{1}$ and proper base change give
\begin{equation*}
\begin{split}
&\bigotimes_{i=1}^m R\widetilde{\text{pr}}_{2!} \left(   (\mathrm{id} \times \tilde{e}_i)^* \mathcal L_\psi ( b_i )  \otimes  \widetilde{\text{pr}}_{1}^* \mathcal F_i \right) [2m-n] = \\ 
&\bigotimes_{i=1}^m R\widetilde{\text{pr}}_{2!} \left(   (\mathrm{id} \times \tilde{e}_i)^* \mathcal L_\psi ( b_i)  \otimes  (\mathrm{id} \times \tilde{e}_i)^* q_{1}^* \mathcal F_i \right) [2m-n] = 
\bigotimes_{i=1}^m  \tilde{e}_i^*   Rq_{2!} (  \mathcal L_\psi(b_i) \otimes q_{1}^* \mathcal F_i )  [2m-n].
\end{split}
\end{equation*}

From the definition of the Fourier transform on $\mathbb A^{1}$ with the character $\psi$, taking $b_i$ as our (nondegenerate) bilinear form, we see that
\[ \bigotimes_{i=1}^m  \tilde{e}_i^*  R q_{2!} (  \mathcal L_\psi(b_i) \otimes q_{1}^* \mathcal F_i )  [2m-n] = \bigotimes_{i=1}^m  \tilde{e}_i^* \mathrm{FT}_{ \psi , b_i}  \mathcal F_i   [m-n]. \]
For every $1 \leq i \leq m$, it follows from our initial assumptions on $\mathcal F_i$ and \cite[Theorem 2 (1)]{Trav} that the Fourier transform $\mathrm{FT}_{ \psi  ,b_i  }  \mathcal F_i  $ is a sheaf on $\mathbb A^{1}$. 
Indeed, it does not matter which nondegenerate bilinear form $b_i$ we choose as all are equivalent up to scaling of $\mathbb A^{1}$.
It follows that $\bigotimes_{i=1}^m  \tilde{e}_i^* \mathrm{FT}_{ \psi , b_i}  \mathcal F_i$ is a sheaf on a variety of dimension $m-n$, so its shift by $m-n$ is semiperverse, as desired.
\end{proof}

To study $R^j \pi_! \overline{\mathcal F}$ beyond the generic point, we use vanishing cycles, which requires us to compactify the fibers of $\pi$.
Denote by $v \colon \mathbb A^n \to \mathbb P^n$ the usual compactification (of the fiber over $0$).

Let $\mathbb P^n \times \mathbb A^{m-n}$ have projective coordinates $(\lambda: a_0 : \dots : a_{n-1})$ and affine coordinates $a_n, \dots, a_{m-1}$. 
Let $u \colon \mathbb A^m \to \mathbb P^n \times \mathbb A^{m-n}$ send $(a_0,\dots, a_{m-1})$ to 
$
\left( (1: a_0 : \dots : a_{n-1}), (a_n, \dots, a_{m-1}) \right)
$
and denote by $\overline{\pi} \colon \mathbb P^n \times \mathbb A^{m-n} \to \mathbb A^{m-n}$ the projection onto the second factor. 
Then  $\pi= \overline{\pi} \circ u$ and $\overline{\pi}$ is proper while $u$ is an open immersion so for every integer $j$ we have
\begin{equation} \label{AddingAbarToPi}
R^j \pi_! \overline{\mathcal F} = R^j \overline{\pi}_*  u_! \overline{\mathcal F}.
\end{equation}

Let $A$ be a discrete valuation ring over $\overline{\mathbb F_q}$ (with residue field $\overline{\F_q}$), for instance the ring of power series $A = \overline{\F_q}[[\theta]]$.
We have the special point $s \colon \operatorname{Spec} \overline{\F_q} \to \operatorname{Spec} A$ obtained by sending a uniformizer $\theta$ to $0$. 
%and the map $\delta \colon \operatorname{Spec} A \to \operatorname{Spec} \overline{\F_q}$ which is a section of $s$.
Let $\tau\colon \operatorname{Spec} A \to \mathbb A^{m-n}$ be any map sending the special point of $\operatorname{Spec} A$ to $0 \in \mathbb{A}^{m-n}$ and the generic point $\eta$ of $\operatorname{Spec} A$ to the generic point $\xi$ of $\mathbb{A}^{m-n}$.
Abusing notation, we will at times view $\tau$ also as a ring homomorphism from $\overline{\F_q}[a_{n}, \dots, a_{m-1}]$ to $A$.
Expanding \cref{SmallFiberCartesianSquare}, we consider the diagram
\begin{equation} \label{CurvedCartesianDiag}
\begin{tikzcd}  
\mathbb A^n \arrow[r,"s_{\mathbb{A}}"]  \arrow[dd, bend right, ""'] \arrow[d, "v"] \arrow[rr, bend left = 22, hook, ""] \arrow[rrr, bend left = 35, "e_i"] & \mathbb A^n \times \operatorname{Spec} A \arrow[d,"u^A"] \arrow[r, "\mathrm{id} \times \tau"] \arrow[rr, bend left = 22, "\overline{e}_i^A"] 
%\arrow[l, bend left, "\delta_{\mathbb{A}}"]
%\arrow[dd, bend left = 60, "\sigma^A" near start] 
& \mathbb A^{n} \times \mathbb{A}^{m-n} \arrow[d, "u = v \times \textup{id}"]  \arrow[dd, bend left = 72, "\pi"] \arrow[r,"\overline{e}_i"] & \mathbb{A}^{1} \\ 
\mathbb P^n \arrow[r,"s_{\mathbb{P}}"]  
\arrow[d, ""] 
& \mathbb P^n \times \operatorname{Spec} A \arrow[d, "{\overline{\pi}}^A"] \arrow[r, "\mathrm{id} \times \tau"] 
%\arrow[l, bend left, "\delta_{\mathbb{P}}"] 
& \mathbb P^n \times \mathbb{A}^{m-n}  \arrow[d,"\overline{\pi}"] \\ 
\operatorname{Spec} \overline{\F_q} \arrow[rr, bend right,"0"] \arrow[r,"s"] & \operatorname{Spec} A \arrow[r,"\tau"] 
%\arrow[l, bend left = 12, "\delta"] 
& \mathbb{A}^{m-n}
\end{tikzcd}
\end{equation}
where all the new maps are defined in the natural way that makes the diagram commutative and the squares Cartesian.
All the maps and fiber products are over $\operatorname{Spec} \overline{\F_q}$, and we will use abbreviations such as $\mathbb{A}^n_A$ (for $\mathbb A^n \times \operatorname{Spec} A$). We put
\[
\overline{ \mathcal F}^A = \bigotimes_{i=1}^m \overline{e}_i^{A*} \mathcal F_i. 
\]

Viewing $e_i$ as a rational map from $\mathbb P^n$ to $\mathbb P^{1}$, we denote by $U_i$ its (largest) domain of definition, and denote by $\tilde{e}_i \colon U_i \to \mathbb P^{1}$ the resulting morphism.   
Explicitly, the open subset $U_i$ of $\mathbb P^n$ consists of those points $(\lambda: a_0 : \dots : a_{n-1})$ where either $\lambda \neq 0$ or $\sum_{j=0}^{n-1} a_j x_i^j \neq 0$, and 
\[
\tilde{e}_i (\lambda :a_0 : \dots :a_{n-1}) = (\lambda : \sum_{j=0}^{n-1} a_jx_i^j).
\]

\begin{lem}\label{complicated-vanishing} 
The vanishing cycles
$R \Phi u^A_! \overline{\mathcal F}^A$ are supported on the locus
\[ 
X = \left\{  y \in \mathbb P^n : \#\{1 \leq i \leq m : y \notin U_i \textup{ or } \mathcal F_i \textup{ is not translation-invariant at } \tilde{e}_i(y) \} > n \right\}.
\] 

\end{lem}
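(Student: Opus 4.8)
\emph{Proof plan.} Since the formation of vanishing cycles is local on $\mathbb P^n$, it suffices to fix a geometric point $\overline y$ over a point $y\in\mathbb P^n\setminus X$ and to show $(R\Phi\, u^A_!\overline{\mathcal F}^A)_{\overline y}=0$. Here $y\notin X$ means exactly that the set $B=\{1\le i\le m:\ y\notin U_i\ \text{or}\ \mathcal F_i\ \text{is not translation-invariant at}\ \tilde e_i(y)\}$ has $|B|\le n$. The plan is to produce an automorphism $\Psi$ of $\mathbb P^n\times\mathbb A^{m-n}$ over $\mathbb A^{m-n}$, preserving the open $\mathbb A^m$, such that in an \'etale neighbourhood of $(y,0)$ the sheaf $\Psi^*u_!\overline{\mathcal F}$ is isomorphic to $\mathrm{pr}_{\mathbb P^n}^*\mathcal N_0$ for some constructible sheaf $\mathcal N_0$ on (a neighbourhood of $y$ in) $\mathbb P^n$. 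Granting this, base change along $\tau$: a sheaf pulled back from the special fibre $\mathbb P^n$ of $\overline{\pi}^A\colon\mathbb P^n_A\to\operatorname{Spec}A$ has vanishing cycles zero, and $R\Phi$ commutes with pullback by the automorphism $\Psi^A$ of $\mathbb P^n_A$ over $\operatorname{Spec}A$, so $(R\Phi\, u^A_!\overline{\mathcal F}^A)_{\overline y}=0$ follows.

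To construct $\Psi$, I would first observe that moving in the $\mathbb A^{m-n}$-direction acts on the $i$-th evaluation by translation: the rational map $\overline e_i$ extends over $U_i\times\mathbb A^{m-n}$ to the morphism $(y',a')\mapsto\textup{trans}\!\big(\tilde e_i(y'),\ell_i(a')\big)$, where $\ell_i$ is the linear form $\ell_i(a')=\sum_{j=n}^{m-1}a'_{j-n}x_i^{\,j}$ on $\mathbb A^{m-n}$. Because $g$ is squarefree the points $x_i$ are distinct, and $|B|\le n$, so there is a polynomial $P_{a'}$ of degree $<n$, depending linearly on $a'$, with $P_{a'}(x_i)=-\ell_i(a')$ for every $i\in B$. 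Translation of $\mathbb A^n$ by $P_{a'}$ extends to a linear automorphism of $\mathbb P^n$ that fixes $\mathbb P^n\setminus\mathbb A^n$ pointwise, and I let $\Psi$ be the fibrewise action of these automorphisms over $\mathbb A^{m-n}$; it is then an automorphism over $\mathbb A^{m-n}$ restricting to an automorphism of $\mathbb A^m$.

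The point of this choice is that it makes the bad factors constant in the family direction and leaves the good ones so. For $i\in B$ the composite $\overline e_i\circ\Psi$, restricted to $\mathbb A^m$, is independent of $a'$ — it is the evaluation $e_i$ pulled back from $\mathbb A^n$ — so the corresponding factor is $\mathrm{pr}_{\mathbb A^n}^*e_i^*\mathcal F_i$, and applying $u_!$ produces a sheaf pulled back from $\mathbb P^n$ (as $u_!\mathrm{pr}_{\mathbb A^n}^*(-)=\mathrm{pr}_{\mathbb P^n}^*j_!(-)$ on sheaves with zero stalks along the boundary). For $i\notin B$ the composite $\overline e_i\circ\Psi$ equals $(y',a')\mapsto\textup{trans}\!\big(\tilde e_i(y'),m_i(a')\big)$ for another linear form $m_i$ with $m_i(0)=0$; translation-invariance of $\mathcal F_i$ at $\tilde e_i(y)$ — which by \cref{AffineTranslationInvariance} (if $\tilde e_i(y)\in\mathbb A^1$) or \cref{TranslationInvarianceOnTheLine} (if $\tilde e_i(y)=\infty$) says precisely that $\upsilon_!\mathcal F_i$ is unchanged under all small translations near that point — lets me pull back the defining isomorphism of translation-invariance to an isomorphism, near $(y,0)$, between $(\overline e_i\circ\Psi)^*\upsilon_!\mathcal F_i$ and $\mathrm{pr}_{\mathbb P^n}^*\tilde e_i^*\upsilon_!\mathcal F_i$, once again a pullback from $\mathbb P^n$. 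Since both extension by zero and tensor product commute with pullback from $\mathbb P^n$, iterating the projection formula assembles these factors into the asserted isomorphism $\Psi^*u_!\overline{\mathcal F}\cong\mathrm{pr}_{\mathbb P^n}^*\mathcal N_0$ near $(y,0)$.

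I expect the main obstacle to be the local bookkeeping near points $y\in\mathbb P^n\setminus\mathbb A^1$-images, i.e.\ near $y\in\mathbb P^n\setminus\mathbb A^n$ and near the indeterminacy loci of the $\overline e_i$: there some $\overline e_i$ with $i\in B$ may fail to be defined at $y$, the good factors must be controlled through translation-invariance \emph{at $\infty$} rather than through lisseness, and one has to track the extension by zero $u_!$ through all the steps so that the output is genuinely a pullback from $\mathbb P^n$, with no leftover boundary contribution. The automorphism $\Psi$ is engineered exactly to absorb these features, but checking the isomorphism near such $y$ is the step that demands the most care.
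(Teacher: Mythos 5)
Your plan is correct and is essentially the argument the paper uses: the paper's choice of auxiliary polynomial $\overline g$ (divisible by $\prod_{i\in B}(u-x_i)$) and the new projective coordinates $(\lambda:b_0:\dots:b_{n-1})$ amount to exactly the family of translations $\Psi$ you construct via $P_{a'}$, and the paper likewise proves the local isomorphism factor by factor, treating $i\in B$ by the commutative diagram giving a pullback from $\mathbb P^n$ on the nose and treating $i\notin B$ by pulling back the defining isomorphism of translation-invariance along $(\tilde e_i,\delta)$, before concluding via universal local acyclicity of the constant map. The only cosmetic differences are that the paper phrases the change of coordinates as a projection $\textup{pr}_b\colon\mathbb P^n_A\to\mathbb P^n$ rather than an automorphism over $\mathbb A^{m-n}$ followed by base change, and that it pins down a specific $P_{a'}$ (reduction mod $\overline g$) rather than an arbitrary Lagrange interpolant.
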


\begin{proof} 

Fix $y \in \mathbb P^n \setminus X$ and choose a degree $n$ monic polynomial $\overline{g} \in \overline{\F_q}[u]$ divisible by 
\[ \prod_{\substack{ 1 \leq i \leq m \\ y \notin U_i \textrm{ or } \mathcal F_i \textrm{ is not translation-invariant at } \tilde{e}_i(y) }} (u-x_i).\]
Since the leading coefficient of $\overline g$ is a unit of $A$, we can use polynomial long division to find a (necessarily unique) polynomial $t \in A[u]$ of degree less than $m$ that is divisible by $\overline{g}$ and has $\tau(a_n),\dots, \tau(a_{m-1})$ as its coefficients in degrees $n,\dots, m-1$.
As $\tau(a_n), \dots, \tau(a_{m-1})$ lie in the maximal ideal of $A$, the polynomial $t$ vanishes in the residue field of $A$.

We choose a new projective coordinate system for $\mathbb P^n_A$, keeping the coordinate $\lambda$ and defining $b_0,\dots, b_{n-1}$ to be the coefficients of the polynomial
\[a_0 + a_1 u + \dots + a_{n-1} u^{n-1} + \lambda ( \tau(a_n) u^n + \dots + \tau(a_{m-1 }) u^{m-1}) \mod \overline{g}\]
in degrees $0, \dots, n-1$ so that
\[ a_0 + a_1 u + \dots + a_{n-1} u^{n-1} + \lambda ( \tau(a_n) u^n + \dots + \tau(a_{m-1 }) u^{m-1} )= b_0 + b_1 u + \dots + b_{n-1} u^{n-1} + \lambda \overline{g} t. \]

Let $\textup{pr}_b \colon \mathbb P^{n}_A \to \mathbb P^n$ be the projection given by the coordinates $(\lambda : b_0 : \dots : b_{n-1})$.
The remainder of the proof will be devoted to checking that there is an \'{e}tale neighborhood of $s_{\mathbb P}(y)$ on which we have an isomorphism of pullbacks
\begin{equation} \label{RemainderProofIsom}
u^A_! \overline{ \mathcal F}^A \cong \textup{pr}_b^* v_! \mathcal F.
\end{equation}
Then by \cite[Th. Finitude, Corollary 2.16]{sga4h}, the sheaf $v_! \mathcal{F}$ (or any other sheaf) on $\mathbb P^n$ is universally locally acyclic with respect to the constant map $\mathbb{P}^n \to \Spec \overline{\mathbb F_q}$, so $\textup{pr}_b^* v_! \mathcal F$ is locally acyclic with respect to the constant map $\mathbb{P}^n_A \to \Spec A$, and thus $R \Phi \textup{pr}_b^* v_! \mathcal F = 0$ in particular $(R \Phi \textup{pr}_b^* v_! \mathcal F)_y =0$.
Since vanishing cycles are \'{e}tale-local, and $u^A_! \overline{ \mathcal F}^A \cong \textup{pr}_b^* v_! \mathcal F$ in an \'etale neighborhood of $s_{\mathbb P}(y)$, we get that
\[
(R \Phi u^A_! \overline{ \mathcal F}^A)_y = (R \Phi \textup{pr}_b^* v_! \mathcal F)_y =0
\]
as desired.

Because tensor products commute with extensions by zero and pullbacks, in order to prove \cref{RemainderProofIsom} it suffices to find for each $1 \leq i \leq m$ an \'etale neighborhood of $s_{\mathbb P}(y)$ over which there is an isomorphism
\[ u^A_!  \overline{e}_i^{A*} \mathcal F_i \cong \textup{pr}_b^* v_!  e_i^* \mathcal F_i .\]

We split into two cases. In the first case, either $y\notin U_i$ or $\mathcal F_i$ is not translation-invariant at $\tilde{e}_i(y)$, so that $\overline{g}$ vanishes at $x_i$. In this case, the sheaves $u^A_!  \overline{e}_i^{A*} \mathcal F_i $ and $ \textup{pr}_b^* v_!  e_i^* \mathcal F_i $ are simply isomorphic (everywhere), without the need to pass to an \'{e}tale neighborhood. To see this note that 
\begin{equation} \label{CurvedCartesianDiag}
\begin{tikzcd}  
\mathbb A^n_A \arrow[r,"u^A"]  \arrow[dd, "\overline{e}_i^A"', bend right] \arrow[d, ""] & \mathbb P^n_A \arrow[d,"\textup{pr}_b"] \\ 
\mathbb A^n \arrow[r,"v"]  \arrow[d, "e_i"] 
& \mathbb P^n \\ 
\mathbb A^{1}
\end{tikzcd}
\end{equation}
is a commutative diagram with a Cartesian square. 
The proper (or smooth) base change theorem then tells us that $u^A_!  \overline{e}_i^{A*} \mathcal F_i \cong \textup{pr}_b^* v_!  e_i^* \mathcal F_i$ so we are done with the first case.

In the second case, we assume that $y \in U_i$ and that $\mathcal F_i$ is translation-invariant at $\tilde{e}_i(y)$. 
We put $U_{i,A} = \textup{pr}_b^{-1}(U_i)$, and denote by $w \colon U_{i,A} \to \mathbb P^n_A$ the corresponding open immersion.
Define
\[
( \tilde{e}_i, \delta) \colon U_{i,A} \to \mathbb P^{1} \times \mathbb A^{1}, \quad (\tilde{e}_i, \delta)(\lambda : b_0 : \dots: b_{n-1}) = 
((\lambda : b_0 + b_1 x_i + \dots + b_{n-1} x_i^{n-1}),   \overline{g}(x_i) t(x_i) ).\]
Denoting by $\upsilon$ the inclusion of $\mathbb A^{1}$ into $\mathbb P^{1}$, we claim that
\begin{equation} \label{IsomFromCommDiag}
w^* u^A_!  \overline{e}_i^{A*} \mathcal F_i \cong (\tilde{e}_i, \delta)^* \textup{trans}^* \upsilon_{!} \mathcal F_i.
\end{equation}

To prove the claim, we will check that 
\begin{equation} \label{CommDiagCartesSquareTwoTriangles}
\begin{tikzcd} 
& &  \mathbb P^{1} \times \mathbb A^{1}   \arrow[d,"\textup{trans}"]\\ 
 \mathbb P^n_A 
& U_{i,A} \arrow[ur, "{(\tilde{e}_i,\delta)}"]\arrow[l,"w"]  \arrow[r] & \mathbb P^{1}   \\
& \mathbb A^n_A \arrow[u] \arrow[ul, "u^A"] \arrow[r,"\overline{e}_i^A"] &  \mathbb A^{1} \arrow[u,"\upsilon"]
\end{tikzcd}
\end{equation}
is a commutative diagram with Cartesian square.
From this diagram, the isomorphism in \cref{IsomFromCommDiag} follows by applying functoriality of pullback through the top triangle, a trivial form of proper base change through the Cartesian square, the fact that $w^* w_!$ is the identity since $w$ is an open immersion, and functoriality of extension by zero through the left triangle.

The commutativity of the diagram in \cref{CommDiagCartesSquareTwoTriangles} is the statement that, for $(b_0,\dots, b_{n-1}) \in \mathbb A^n_A$ we have
\[
\upsilon (  \overline{e}_i^A  ( b_0,\dots, b_{n-1}) ) = \textup{trans} ( (\tilde{e}_i,\delta) ( 1: b_0 : \dots : b_{n-1} ) ) 
\]
which follows from the calculation
\begin{equation*}
\begin{split}
\textup{trans} ( (\tilde{e}_i,\delta) ( 1: b_0 : \dots : b_{n-1})) &= \textup{trans} ( (1 : b_0 + b_1 x_i + \dots + b_{n-1} x_i^{n-1} ),   \overline{g}(x_i) t(x_i)) \\
&= (1 : b_0 + b_1 x_i+ \dots + b_{n-1} x_i^{n-1}  + 1 \cdot \overline{g}(x_i) t(x_i)).
\end{split}
\end{equation*} 
and the calculation
\begin{equation*}
\begin{split}
\upsilon (  \overline{e}_i^A  ( b_0,\dots, b_{n-1})) &= \upsilon( b_0 + b_1 x_i + \dots + b_{n-1} x_i^{n-1} + 1 \cdot \overline{g}(x_i) t(x_i)) \\
&= (1 : b_0 + b_1 x_i + \dots + b_{n-1} x_i^{n-1} + 1 \cdot \overline{g}(x_i) t(x_i)).
\end{split}
\end{equation*} 

Here and below we use the easily checked Zariski density of the $A$-rational points to deduce that two maps are equal if they agree on all $A$-points, 
although one could also reformulate our argument to be about $R$-points of these rings for an arbitrary $A$-algebra $R$, or to work directly with functions.

To check that the square in \cref{CommDiagCartesSquareTwoTriangles} is Cartesian, 
we note that the vertical arrows are open immersions whose image is the open set where $\lambda \neq 0$, 
so we need to check that the inverse image under $\textup{trans}  \circ (\tilde{e}_i,\delta)$ of the open subset of $\mathbb P^{1}$ where $\lambda \neq 0$ is the open subset of $U_{i,A}$ where $\lambda \neq 0$. This is true because neither $\textup{trans}$ nor $(\tilde{e}_i, \delta)$ changes $\lambda$.
This completes the proof of the isomorphism in \cref{IsomFromCommDiag}.

Next we denote by $\pi_1 \colon \mathbb P^{1} \times \mathbb A^{1} \to \mathbb P^{1}$ the projection on the first coordinate and claim that
\begin{equation} \label{SecondIsomClaimDiagComm}
w^* \textup{pr}_b^* v_! e_i^* \mathcal F_i \cong (\tilde{e}_i, \delta)^* \pi_1^* \upsilon_{!} \mathcal F_i.
\end{equation}
To prove this claim, we look at the commutative diagram 
\begin{equation} \label{CommDiagCartesSquareThreeSquares}
\begin{tikzcd}  \mathbb P^n_A \arrow[d,"\textup{pr}_b"] 
&  U_{i,A} \arrow[d] \arrow[r, "{(\tilde{e}_i,\delta)}"]\arrow[l,"w"] 
&  \mathbb P^{1} \times \mathbb A^{1}   \arrow[d,"\pi_1"]\\ 
\mathbb P^n  
&  U_{i}\arrow[l, "z"]  \arrow[r,"\tilde{e}_i"]
& \mathbb P^{1}   \\
& \mathbb A^n  \arrow[ul,"v"] \arrow[u] \arrow[r,"e_i"] 
 &  \mathbb A^{1}  \arrow[u,"\upsilon"]
 \end{tikzcd} 
 \end{equation}
with Cartesian squares.
From this diagram, the isomorphism in \cref{SecondIsomClaimDiagComm} follows by applying functoriality of pullback through the top-right square, a trivial form of proper base change through the bottom-right Cartesian square, the fact that $z^* z_!$ is the identity since $z$ is an open immersion, functoriality of extension by zero through the bottom-left triangle, and functoriality of pullback through the top-left square.

The commutativity of the top-right square in \cref{CommDiagCartesSquareThreeSquares} is the statement that
\[ \tilde{e}_i ( \lambda: b_0: \dots: b_{n-1}) = \pi_1 ( (\tilde{e}_i,\delta) ( \lambda: b_0 : \dots : b_{n-1})), \quad (\lambda: b_0: \dots:  b_{n-1}) \in U_{i,A} \]
which holds because
\begin{equation*}
\begin{split}
\pi_1 ( (\tilde{e}_i,\delta) ( \lambda: b_0 : \dots : b_{n-1})) &= 
\pi_1 ( (\lambda : b_0 + b_1 x_i + \dots + b_{n-1} x_i^{n-1}), \overline{g}(x_i) t(x_i)) \\
&=  (\lambda : b_0 + b_1 x_i+ \dots + b_{n-1} x_i^{n-1}) = \tilde{e}_i ( \lambda: b_0: \dots : b_{n-1}).
\end{split}
\end{equation*}

The commutativity of the bottom-right square in \cref{CommDiagCartesSquareThreeSquares} is the statement that  
\[ \tilde{e}_i ( b_0 , \dots , b_{n-1}) = \upsilon ( e_i ( b_0 , \dots , b_{n-1})), \quad b_0,\dots, b_{n-1} \in \overline{\mathbb F_q}, \]
which follows from the calculation
\begin{equation*}
\begin{split}
\upsilon (e_i( b_0 , \dots , b_{n-1})) &= \upsilon(b_0 + b_1 x_i+ \dots + b_{n-1} x_i^{n-1}) \\
&= (1: b_0 + b_1 x_i + \dots + b_{n-1} x_i^{n-1})  = \tilde{e}_i ( b_0 , \dots , b_{n-1}).
\end{split}
\end{equation*}

All the squares in \cref{CommDiagCartesSquareThreeSquares} are Cartesian, but we have only used and thus will only check that the bottom-right square is Cartesian.
To check this we note that both vertical arrows are open immersions whose image is the open subset where $\lambda \neq 0$, and that pullback by $\tilde{e}_i$ preserves this open subset as it does not `touch' $\lambda$.

Since $w$ is an open immersion and thus \'{e}tale, it suffices to produce an isomorphism between $w^* u^A_!  \overline{e}_i^{A*} \mathcal F_i $ and $w^* \textup{pr}_b^* v_! e_i^* \mathcal F_i$ over an \'{e}tale neighborhood of $s_{\mathbb P}(y)$. In view of \cref{IsomFromCommDiag} and \cref{SecondIsomClaimDiagComm} we need to produce an isomorphism between $ (\tilde{e}_i, \delta)^* \textup{trans}^* \upsilon_{!} \mathcal F_i$ and  $ (\tilde{e}_i, \delta)^* \pi_1^* \upsilon_{!} \mathcal F_i$ over an \'{e}tale neighborhood of $s_{\mathbb P}(y)$. 
As we are in the second case, the sheaf $\mathcal F_i$ is translation-invariant at $\tilde{e}_i(y)$. 
Thus, by the definition of translation-invariance, there is an isomorphism between $\pi_1^* \upsilon_{!} \mathcal F_i$ and $\textup{trans}^* \upsilon_{!} \mathcal F_i$ in some \'etale neighborhood of $(\tilde{e}_i(y),0)$. 
As
\[
(\tilde{e}_i, \delta)(s_{\mathbb P}(y)) = (\tilde{e}_i(\textup{pr}_b(s_{\mathbb P}(y))),0) = (\tilde{e}_i(y),0),
\]
pulling back that \'{e}tale neighborhood and isomorphism under $(\tilde{e}_i, \delta)$ gives the claim.
\end{proof}

\begin{lem} \label{weird-set-dimension} 

For every $1 \leq i \leq m$ suppose that all the slopes of the local monodromy representation of $\mathcal F_i$ at $\infty$ are bounded from above by $1$. 
Then the dimension of $X$ is $0$. 
\end{lem}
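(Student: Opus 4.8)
The plan is to realize $X$ as a finite set of points, by covering it with finitely many linear subvarieties of $\mathbb P^n$ each of which turns out to be zero-dimensional, the zero-dimensionality ultimately coming from a Vandermonde determinant.

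First I would pin down, for each $i$, which points of $\mathbb P^n$ can possibly contribute to the count in the definition of $X$. By \cref{AffineTranslationInvariance}, $\mathcal F_i$ is translation-invariant at a point of $\mathbb A^1$ exactly when it is lisse there, and since $\mathcal F_i$ is a constructible sheaf on $\mathbb A^1$ this fails only on a finite set $S_i \subset \mathbb A^1$; by \cref{TranslationInvarianceOnTheLine} together with the hypothesis that the slopes of $\mathcal F_i$ at $\infty$ are at most $1$, the sheaf $\mathcal F_i$ is also translation-invariant at $\infty$. Using the explicit description of $U_i$ and $\tilde e_i$ given above, a point $y = (\lambda : a_0 : \dots : a_{n-1})$ satisfies ``$y \notin U_i$ or $\mathcal F_i$ is not translation-invariant at $\tilde e_i(y)$'' only if $\sum_{j=0}^{n-1} a_j x_i^j = 0$ and $\lambda = 0$ (the case $y \notin U_i$), or $\sum_{j=0}^{n-1} a_j x_i^j = \sigma \lambda$ for some $\sigma \in S_i$ (the case $\tilde e_i(y) \in \mathbb A^1$); the possibility $\tilde e_i(y) = \infty$ never contributes, since $\mathcal F_i$ is translation-invariant there. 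In every case $y$ lies on the zero locus of a linear form of the shape
\[ \ell_{i,c} \;=\; \sum_{j=0}^{n-1} x_i^j\, a_j \;-\; c\,\lambda, \qquad c \in S_i \cup \{0\}, \]
and these constitute a finite family of hyperplanes in $\mathbb P^n$, indexed by $i$ and $c$.

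Next, suppose $y \in X$, so that $y$ lies on one of these hyperplanes for at least $n+1$ distinct indices $i$; fix such indices $i_1 < \dots < i_{n+1}$ and, for each, a scalar $c_{i_j} \in S_{i_j} \cup \{0\}$ with $\ell_{i_j, c_{i_j}}(y) = 0$. Restricting the coefficient vectors of the first $n$ of these forms to the coordinates $a_0, \dots, a_{n-1}$ yields the rows of the Vandermonde matrix $(x_{i_j}^{k})_{1 \le j \le n,\ 0 \le k \le n-1}$, whose determinant $\prod_{1 \le j < j' \le n}(x_{i_{j'}} - x_{i_j})$ is nonzero because $g$ is squarefree and hence its roots $x_1, \dots, x_m$ are pairwise distinct. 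Thus $\ell_{i_1, c_{i_1}}, \dots, \ell_{i_n, c_{i_n}}$ are linearly independent, so the $n+1$ linear forms $\ell_{i_1, c_{i_1}}, \dots, \ell_{i_{n+1}, c_{i_{n+1}}}$ on the affine space with coordinates $\lambda, a_0, \dots, a_{n-1}$ have rank at least $n$, whence their common zero locus in $\mathbb P^n$ has dimension at most $0$. Since $X$ is contained in the union of these loci over the finitely many choices of $(i_1, \dots, i_{n+1})$ and of $(c_{i_1}, \dots, c_{i_{n+1}})$ (and $X$ is empty when $m \le n$), it follows that $X$ is a finite set of points, so $\dim X = 0$. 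The only delicate point is the bookkeeping in the middle step: one must check that each way in which an index $i$ can force $y$ into the bad locus is governed by a single linear equation of the above Vandermonde shape. Once that is in place, the distinctness of the $x_i$ — which is the real content of the lemma — finishes the argument immediately.
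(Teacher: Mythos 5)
Your argument is correct and follows essentially the same route as the paper's: reduce to the observation that the bad locus for each index $i$ is contained in a finite union of hyperplanes of the shape $f(x_i) = c\lambda$, and then use the distinctness of the roots of $g$ to show that more than $n$ such conditions cut out something zero-dimensional. The paper phrases the final step via the Chinese Remainder Theorem and a degree count (any $f$ of degree less than $n$ is determined by its reductions modulo $\prod_{i\in T}(u-x_i)$ once $\abs{T}\ge n$), while you make the same fact explicit as the nonvanishing of a Vandermonde determinant; these are the same ingredient in two guises, so there is no real divergence of method.
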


\begin{proof} 

The locus $X$ is the union over all subsets $T$ of $\{1,\dots, m\}$ of cardinality exceeding $n$ of the locus $X_T$ of all those $y \in \mathbb P^n$ where $y\notin U_i$ or $\mathcal F_i$ is not translation-invariant at $\tilde{e}_i(y)$ for all $i \in T$. It suffices to prove that $X_T$ is $0$-dimensional for all such $T$.

By \cref{AffineTranslationInvariance}, \cref{TranslationInvarianceOnTheLine}, the fact that $\mathcal F_i$ is lisse away from finitely many points (by constructibility),
and our assumption that the slopes of $\mathcal F_i$ at $\infty$ are at most $1$, the set of $y \in U_i$ for which $\mathcal F_i$ is not translation-invariant at $\tilde{e}_i(y)$ is the union over the finitely many singular points $z \in \mathbb A^1$ of $\mathcal F$ of $\tilde{e}_i^{-1}(z)$, which, using $(\lambda : f)$ as coordinates of $\mathbb P^n$, is the hyperplane $f(x_i) = \lambda z$. The set of $y \in \mathbb P^n \setminus U_i$ is contained in the projective hyperplane $f(x_i) =  0$.
Thus, among all $(\lambda:f)$ in the affine cone over $X_T$, there are only finitely many possibilities for $f(x_i)$ for each $\lambda$, so by the Chinese Remainder Theorem there are only finitely many possibilities for $f \mod \prod_{i \in T} (u-x_i)$ for each $\lambda$. 

Since $f$ is a polynomial of degree less than $n$, and the degree of $\prod_{i \in T} (u-x_i)$ is $\abs{T} \geq n$, there is at most one polynomial $f$ in each congruence class modulo $\prod_{i \in T} (u-x_i)$, so the dimension of $X_T$ is indeed $0$.
\end{proof}

\begin{cor} \label{concentration-lemma} 

For every $1 \leq i \leq m$ suppose that all the slopes of the local monodromy representation of $\mathcal F_i$ at $\infty$ are bounded from above by $1$. 
Then for every integer $j > n$ we have $H^j(\mathbb P^n, R \Phi u^A_! \overline{ \mathcal F}^A) = 0$. 

\end{cor}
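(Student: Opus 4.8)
The plan is to combine \cref{complicated-vanishing} and \cref{weird-set-dimension} to rewrite $H^j(\mathbb P^n, R\Phi u^A_!\overline{\mathcal F}^A)$ as a finite direct sum of stalks of the vanishing cycles complex, and then to bound the cohomological amplitude of each such stalk by $n$.

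First I would record that the constructible complex $\mathcal C := R\Phi u^A_!\overline{\mathcal F}^A$ lives on the special fibre $\mathbb P^n = \mathbb P^n_{\overline{\F_q}}$ (where the vanishing cycles are computed), that \cref{complicated-vanishing} puts its support inside the closed set $X$, and that \cref{weird-set-dimension} --- the only place the hypothesis on the slopes at $\infty$ enters --- gives $\dim X = 0$, so $X$ is a finite set of $\overline{\F_q}$-points. Writing $i \colon X \hookrightarrow \mathbb P^n$ for the closed immersion and $j$ for the complementary open immersion, the triangle $j_! j^* \mathcal C \to \mathcal C \to i_* i^* \mathcal C \xrightarrow{+1}$ together with $j^* \mathcal C = 0$ gives $\mathcal C \cong i_* i^* \mathcal C$; since $X$ is a disjoint union of copies of $\Spec \overline{\F_q}$, this yields $R\Gamma(\mathbb P^n, \mathcal C) = \bigoplus_{y \in X} \mathcal C_y$ (with $\mathcal C_y$ the stalk), and hence
\[
H^j(\mathbb P^n, R\Phi u^A_!\overline{\mathcal F}^A) = \bigoplus_{y \in X} \mathcal H^j(\mathcal C_y).
\]
So it suffices to show $\mathcal H^j(\mathcal C_y) = 0$ for every point $y$ of $\mathbb P^n$ and every $j > n$.

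For this I would use that $u^A_!\overline{\mathcal F}^A = \bigotimes_{i=1}^m \overline{e}_i^{A*} \mathcal F_i$ is a single sheaf placed in degree $0$. From the vanishing cycles triangle $s_{\mathbb P}^* u^A_!\overline{\mathcal F}^A \to R\Psi u^A_!\overline{\mathcal F}^A \to \mathcal C \xrightarrow{+1}$ and the concentration of the first term in degree $0$, we get $\mathcal H^j(\mathcal C_y) \cong \mathcal H^j\big((R\Psi u^A_!\overline{\mathcal F}^A)_y\big)$ for all $j \ge 1$, in particular for $j > n$. Writing $T_y$ for the Milnor tube at $y$ --- the base change to a geometric point $\bar\eta$ over the generic point $\eta$ of $\Spec A$ of the spectrum of the strictly henselian local ring of $\mathbb P^n_A$ at a geometric point over $y$ --- the local description of nearby cycles identifies $\mathcal H^j\big((R\Psi u^A_!\overline{\mathcal F}^A)_y\big)$ with $H^j(T_y, u^A_!\overline{\mathcal F}^A)$. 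Since $\mathbb P^n_A$ is regular of dimension $n+1$, the tube $T_y$ is an affine scheme of Krull dimension at most $n$ over the separably closed field $\kappa(\bar\eta)$, and it is a cofiltered limit, along affine transition maps, of affine schemes of dimension at most $n$ over that field. By Artin's bound on the $\ell$-cohomological dimension of such schemes over a separably closed field --- equivalently, by the standard fact that for a finite-type morphism to a trait of relative dimension $n$ the nearby cycles functor $R\Psi$ carries sheaves into $D^{[0,n]}$, see \cite{sga4h} --- we get $H^j(T_y, -) = 0$ for $j > n$. Hence $\mathcal H^j(\mathcal C_y) = 0$ for $j > n$, which completes the proof.

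I expect all the real content to be in the amplitude bound of the last paragraph; passing to stalks via the $0$-dimensional support, and identifying $\mathcal H^j$ of $R\Phi$ with $\mathcal H^j$ of $R\Psi$ in positive degrees, are formal. The one point that deserves care is that one must work with the \emph{geometric} generic point $\bar\eta$, so that the Milnor tube sits over a separably closed field and Artin's vanishing for affine schemes applies with the sharp bound $n$ (rather than $2n$); this is precisely what lets the equality $\dim X = 0$ of \cref{weird-set-dimension} force $H^j = 0$ for all $j > n$ instead of merely for $j$ above some larger threshold.
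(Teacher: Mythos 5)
Your proof is correct and takes a genuinely different (though morally equivalent) route from the paper's. Both arguments rely on \cref{complicated-vanishing} and \cref{weird-set-dimension} in the same way, namely to see that the vanishing cycles complex $\mathcal C = R\Phi u^A_!\overline{\mathcal F}^A$ has zero-dimensional support, so that its global hypercohomology is a finite direct sum of stalks; what differs is how the cohomological amplitude of those stalks is bounded. The paper's proof invokes the perverse package: $u^A_!\overline{\mathcal F}^A[n+1]$ is semiperverse on the $(n+1)$-dimensional scheme $\mathbb P^n_A$, vanishing cycles shifted by $[-1]$ preserve semiperversity (cited to \cite{BBD}), and then semiperversity plus the support bound yield the concentration in degrees $\leq n$. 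Your argument unpacks what lies behind that package: you pass through the nearby cycles triangle $s_{\mathbb P}^*u^A_!\overline{\mathcal F}^A \to R\Psi \to R\Phi \to$, observe that since $u^A_!\overline{\mathcal F}^A$ is a sheaf in degree $0$ the map $\mathcal H^j(R\Psi)_y \to \mathcal H^j(\mathcal C)_y$ is an isomorphism for $j \geq 2$ (which covers $j > n$ since $n \geq 1$), and then bound $\mathcal H^j(R\Psi)_y$ by Artin's vanishing theorem applied to the Milnor tube. The paper's argument is shorter because it cites a packaged result; yours is more self-contained and transparent about where the dimension count $n$ enters, at the cost of having to handle the Milnor tube as a cofiltered limit of affine schemes of finite type. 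One small point worth making precise when you pass to the limit: you should take the limit over \emph{affine} étale neighborhoods $U$ of $\bar y$ in $\mathbb P^n_A$ (these are cofinal), so that each generic geometric fiber $U_{\bar\eta}$ is an affine scheme of finite type over $\kappa(\bar\eta)$ of dimension at most $n$, and then Grothendieck's theorem on commutation of cohomology with such limits combined with Artin's vanishing gives $H^j(T_y,-)=0$ for $j>n$; your reference for the amplitude bound on $R\Psi$ is probably better placed in SGA 7 (or Illusie's exposé on local monodromy) than in \cite{sga4h}, whose Th.\ Finitude establishes constructibility of $R\Psi$ but does not spell out the sharp amplitude estimate.
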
 

\begin{proof}  

As $ u^A_! \overline{\mathcal F}^A$ is a sheaf on a scheme of dimension $n+1$, its shift $u^A_! \overline{\mathcal F}^A[n+1]$ is semiperverse. 
By the semiperversity of vanishing cycles (which follows from \cite[Proposition 4.4.2]{BBD} using the vanishing cycles long exact sequence), we know that $R\Phi u^A_! \overline{\mathcal F}^A [n]$ is semiperverse. 

By \cref{complicated-vanishing} and \cref{weird-set-dimension}, the support of $ R \Phi   u^A_!  \overline{\mathcal F}^A$ has dimension at most $n$.
This in conjunction with semiperversity and \cite[4.2.3]{BBD} tells us that the cohomology of $ R \Phi   u^A_! \overline{ \mathcal F}^A$ is concentrated in degrees at most $n$.
\end{proof} 

\begin{prop} \label{tau-pullback-equation} 
For every integer $j$ we have
$
\tau^* R^j \overline{\pi}_*  u_! \overline{ \mathcal F} =  R^j \overline{\pi}^A_*  u^A_! \overline{ \mathcal F}^A.
$
\end{prop}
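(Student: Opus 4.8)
The plan is to reduce the statement to two instances of base change in étale cohomology, read off from the large commutative diagram above in which all the squares are Cartesian and all the maps lie over $\Spec\overline{\mathbb F_q}$. Write $\tau_{\mathbb A}=\mathrm{id}_{\mathbb A^n}\times\tau\colon \mathbb A^n_A\to \mathbb A^n\times\mathbb A^{m-n}$ and $\tau_{\mathbb P}=\mathrm{id}_{\mathbb P^n}\times\tau\colon \mathbb P^n_A\to\mathbb P^n\times\mathbb A^{m-n}$ for the two horizontal maps labelled $\mathrm{id}\times\tau$ there; both are base changes of $\tau\colon\Spec A\to\mathbb A^{m-n}$ (along $\mathbb A^n\times\mathbb A^{m-n}\to\mathbb A^{m-n}$, respectively $\mathbb P^n\times\mathbb A^{m-n}\to\mathbb A^{m-n}$), and $u=v\times\mathrm{id}$ respects these projections, so the square with top row $\tau_{\mathbb A}$, bottom row $\tau_{\mathbb P}$, and vertical maps $u^A$, $u$ is Cartesian, as is the square with top row $\tau_{\mathbb P}$, bottom row $\tau$, and vertical maps $\overline{\pi}^A$, $\overline{\pi}$.

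First I would handle the inner open immersion. Since $u$ is an open immersion, $Ru_!=u_!$, and the base change theorem for $R(-)_!$ (which needs no properness hypothesis) applied to the first Cartesian square above gives $\tau_{\mathbb P}^*u_!\overline{\mathcal F}\cong u^A_!\,\tau_{\mathbb A}^*\overline{\mathcal F}$. Next I would identify the pulled-back sheaf: from the diagram one has $\overline{e}_i^A=\overline{e}_i\circ\tau_{\mathbb A}$ for each $i$, so $\tau_{\mathbb A}^*\overline{e}_i^*\mathcal F_i=(\overline{e}_i^A)^*\mathcal F_i$, and since pullback commutes with tensor products, $\tau_{\mathbb A}^*\overline{\mathcal F}=\tau_{\mathbb A}^*\bigotimes_{i=1}^m\overline{e}_i^*\mathcal F_i=\bigotimes_{i=1}^m(\overline{e}_i^A)^*\mathcal F_i=\overline{\mathcal F}^A$. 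Hence $\tau_{\mathbb P}^*u_!\overline{\mathcal F}\cong u^A_!\overline{\mathcal F}^A$.

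Then I would apply proper base change along $\overline{\pi}$, which is proper, being the projection of $\mathbb P^n\times\mathbb A^{m-n}$ onto $\mathbb A^{m-n}$. For the second Cartesian square this yields a canonical isomorphism $\tau^*R\overline{\pi}_*\mathcal G\cong R\overline{\pi}^A_*\,\tau_{\mathbb P}^*\mathcal G$ in $D_c^b(\Spec A,\overline{\mathbb Q_\ell})$ for every $\mathcal G$ in $D_c^b(\mathbb P^n\times\mathbb A^{m-n},\overline{\mathbb Q_\ell})$. Taking $\mathcal G=u_!\overline{\mathcal F}$, inserting the identification of the previous paragraph, and passing to the $j$-th cohomology sheaf — which commutes with the exact functor $\tau^*$ — gives $\tau^*R^j\overline{\pi}_*u_!\overline{\mathcal F}\cong R^j\overline{\pi}^A_*u^A_!\overline{\mathcal F}^A$, as claimed.

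The argument is purely formal, so I do not expect a genuine obstacle; the only things to keep straight are which of the squares in the big diagram are Cartesian and the distinction that $\overline{\pi}$ (unlike the open immersion $u$) is proper, so that proper base change may legitimately be invoked for $R\overline{\pi}_*$ while the unconditional base change for $R(-)_!$ is what is used for $u$.
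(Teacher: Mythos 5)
Your proof is correct and takes essentially the same approach as the paper: two applications of base change over the same two Cartesian squares of the big diagram, together with the identification $\tau_{\mathbb A}^*\overline{\mathcal F}=\overline{\mathcal F}^A$ coming from $\overline e_i^A=\overline e_i\circ(\mathrm{id}\times\tau)$. The only cosmetic difference is the order of the two steps (you handle the open immersion $u$ first and then the proper projection $\overline{\pi}$, while the paper does the outer square first and then the inner one), and your remark that base change for $R(-)_!$ across the $u$-square needs no properness hypothesis is a slightly more careful phrasing of the same point.
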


\begin{proof}

From proper base change over the lower-right (Cartesian) square in \cref{CurvedCartesianDiag} we get
\begin{equation*} 
\tau^* R^j \overline{\pi}_*  u_! \overline{ \mathcal F} =  
R^j \overline{\pi}^A_* (\mathrm{id} \times \tau)^* u_! \overline{ \mathcal F}
\end{equation*}
and proper base change over the upper-right (Cartesian) square in \cref{CurvedCartesianDiag} gives
\begin{equation*} 
R^j \overline{\pi}^A_* (\mathrm{id} \times \tau)^* u_! \overline{ \mathcal F} = 
R^j \overline{\pi}^A_* u^A_! (\mathrm{id} \times \tau)^* \overline{ \mathcal F}.
\end{equation*}
The result now follows from the definition $\overline{e}_i^A = \overline{e}_i \circ (\mathrm{id} \times \tau)$.
\end{proof}

\begin{cor} \label{TechHeart}

For every $1 \leq i \leq m$ suppose that $\mathcal F_i$ has no finitely supported sections, that its (geometric) global monodromy has neither Artin--Schreier nor trivial factors, and that all the slopes of the local monodromy representation of $\mathcal F_i$ at $\infty$ are bounded from above by $1$. 
Then for every integer $j > n+1$ we have
$
H^j_c ( \mathbb A^n,  \mathcal F ) = 0. 
$

\end{cor}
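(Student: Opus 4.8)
The plan is to feed the three results just established into the long exact sequence of vanishing cycles for the proper morphism $\overline{\pi}^A \colon \mathbb P^n_A \to \operatorname{Spec} A$ and the sheaf $\mathcal G := u^A_! \overline{\mathcal F}^A$. I would take $A$ to be henselian, say $A = \overline{\F_q}[[\theta]]$, so that the nearby/vanishing cycles formalism applies on $\operatorname{Spec} A$, and fix $\tau$ as in the setup above. On the special fiber $\mathbb P^n = \mathbb P^n_{\overline s}$ there is the distinguished triangle $i^* \mathcal G \to R\Psi\,\mathcal G \to R\Phi\,\mathcal G \to i^*\mathcal G[1]$; applying $R\Gamma(\mathbb P^n,-)$ to it produces a long exact sequence, and the whole argument consists in identifying its three kinds of terms.

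For the $i^*$-term: since $u^A$ is an open immersion, extension by zero commutes with base change along the leftmost Cartesian square of the expanded commutative diagram, so $i^*\mathcal G \cong v_!\mathcal F$ (using $\overline{e}_i^A \circ s_{\mathbb A} = e_i$), and hence $H^j(\mathbb P^n, i^*\mathcal G) = H^j_c(\mathbb A^n, \mathcal F)$. (By proper base change together with \propref{tau-pullback-equation} this is also just the stalk $(R^j \pi_! \overline{\mathcal F})_{\overline 0}$, matching the reduction recorded after \eqref{SmallFiberCartesianSquare}.) For the $R\Psi$-term: as $\overline{\pi}^A$ is proper, proper base change for nearby cycles identifies $H^j(\mathbb P^n, R\Psi\,\mathcal G)$ with $(R^j\overline{\pi}^A_* \mathcal G)_{\overline\eta}$, and by \propref{tau-pullback-equation} and \eqref{AddingAbarToPi} this equals $(R^j\pi_! \overline{\mathcal F})_{\overline\xi}$, because $\tau$ carries the geometric generic point of $\operatorname{Spec} A$ over the generic point $\xi$ of $\mathbb A^{m-n}$; by \lemref{generic-step} it vanishes for $j > n$. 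For the $R\Phi$-term, \corref{concentration-lemma} gives directly $H^j(\mathbb P^n, R\Phi\,\mathcal G) = 0$ for $j > n$.

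Putting these identifications into the long exact sequence
\[
\cdots \to H^{j-1}(\mathbb P^n, R\Phi\,\mathcal G) \to H^j_c(\mathbb A^n, \mathcal F) \to H^j(\mathbb P^n, R\Psi\,\mathcal G) \to \cdots,
\]
one sees that for every $j > n+1$ both outer groups vanish: the left one by \corref{concentration-lemma} in degree $j-1 > n$, the right one by the previous paragraph in degree $j > n$. Exactness then forces $H^j_c(\mathbb A^n, \mathcal F) = 0$, which is precisely the assertion.

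There is no substantial obstacle left at this stage, since all of the genuine geometric input has already been packaged into \lemref{generic-step}, \lemref{complicated-vanishing}, \corref{concentration-lemma} and \propref{tau-pullback-equation}. What still needs care is purely bookkeeping: matching up the several proper base change squares, checking the identification $i^*\mathcal G \cong v_!\mathcal F$, and confirming that $\tau$ sends the geometric generic point of $\operatorname{Spec} A$ over $\xi$ so that \lemref{generic-step} legitimately applies — the last of which only uses that the coordinate ring $\overline{\F_q}[a_n,\dots,a_{m-1}]$ embeds into $\overline{\F_q}[[\theta]]$ with the coordinates mapped into $\theta\,\overline{\F_q}[[\theta]]$. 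One should also be mindful that $A$ must be henselian, not merely a localization, for the vanishing-cycles triangle to be available.
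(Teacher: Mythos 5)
Your proposal is correct and follows essentially the same route as the paper: both feed \lemref{generic-step}, \corref{concentration-lemma}, and \propref{tau-pullback-equation} into the vanishing-cycles long exact sequence for the proper morphism $\overline{\pi}^A$ applied to $u^A_!\overline{\mathcal F}^A$, identifying the special-fiber term with $H^j_c(\mathbb A^n,\mathcal F)$ and the nearby-cycles term with the generic stalk. The only cosmetic difference is that you state the $R\Psi$-term explicitly and compute $i^*\mathcal G \cong v_!\mathcal F$ up front, while the paper writes both as stalks of $R^j\overline{\pi}^A_*$ and appeals to proper base change implicitly.
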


\begin{proof}

We have already identified
$H^j_c ( \mathbb A^n, \mathcal F) $ with the stalk of $R^j \pi_! \overline{ \mathcal F}$ at $0$,
which we further identify with the stalk of $R^j \overline{\pi}_*  u_! \overline{ \mathcal F}$ at $0$ using \cref{AddingAbarToPi}.
We can also view the latter as the stalk of $\tau^* R^j \overline{\pi}_*  u_! \overline{ \mathcal F}$ 
at the special point $s$ of $\operatorname{Spec} A$.
By \cref{tau-pullback-equation}, we have 
\[( \tau^* R^j \overline{\pi}_*  u_! \overline{\mathcal F} )_{s} = (R^j \overline{\pi}^A_*  u^A_! \overline{\mathcal F}^A )_{s}.\] 
This reasoning also shows that
\begin{equation} \label{TwoGenericStalks}
(R^j \pi_! \overline{ \mathcal F})_{\overline{\xi}} = (R^j \overline{\pi}^A_*  u^A_! \overline{\mathcal F}^A )_{\overline \eta}
\end{equation}
where $\overline{\xi}$ (respectively, $\overline{\eta}$) is a geometric generic point of $\mathbb{A}^{m-n}$ (respectively, $\Spec A$).

We have a vanishing cycles long exact sequence
\[ \dots \to (R^j \overline{\pi}^A_*  u^A_! \overline{\mathcal F}^A )_{s} \to (  R^j \overline{\pi}^A_*  u^A_! \overline{\mathcal F}^A)_{\overline \eta}  \to  H^j ( \mathbb P^n,  R \Phi   u^A_! \overline{\mathcal F}^A) \to \dots . \] 
By \cref{generic-step} and \cref{TwoGenericStalks} we have $(  R^j \overline{\pi}^A_*  u^A_! \overline{\mathcal F}^A)_{\overline \eta} = 0$ for $j > n$.
By \cref{concentration-lemma} the third term is also concentrated in degrees at most $n$, so the first term is concentrated in degrees at most $n +1$, as desired. 
\end{proof}

\section{Proof of \cref{MainRes}}

Denote by $m$ the degree of $g$, let $x_1, \dots, x_m \in \overline{\F_q}$ be the distinct roots of $g$, and let $n \leq m$ be a positive integer. 
We view $\mathbb A^n$ as parametrizing polynomials in $\F_q[u]$ of degree less than $n$ via their coefficients.
This space will be used to parametrize the short intervals over which the summation in our exponential sums is performed.
For every $1 \leq i \leq m$ denote by $e_i \colon \mathbb A^n \to \mathbb A^{1}$ the (linear) map that evaluates a polynomial in $\overline{\F_q}[u]$ at $x_i$, and note that there exists a unique monic irreducible factor $\pi$ of $g$ with $\pi(x_i) = 0$.
We define the sheaf $\mathcal{F}_i$ on $\mathbb{A}^1$ over $\overline{\F_q}$ to be the base change of $\mathcal{F}_\pi$ along the embedding
$\mathbb F_q[u]/(\pi) \hookrightarrow \overline{ \mathbb F_{q}}$ mapping $u$ to $x_i$.
That is, we have the Cartesian square
\begin{equation*} 
\begin{tikzcd}  
\mathbb A^{1}_{\overline {\F_q}} \arrow[d,""] \arrow[r, "\xi"]  & \mathbb A^{1}_{\F_q[u]/(\pi)} \arrow[d, ""] \\
\operatorname{Spec} \overline{\F_q} \arrow[r, "u \mapsto x_i"] &\operatorname{Spec} \F_q[u]/(\pi) 
\end{tikzcd}
\end{equation*}
and we set $\mathcal F_i = \xi^* \mathcal F_\pi$.
Our assumption that $\mathcal F_\pi$  has no finitely supported sections means that neither does $\mathcal F_i$.
Our assumption that the global geometric monodromy of $\mathcal F_\pi$  has neither Artin--Schreier nor trivial factors means that the same holds also for $\mathcal F_i$.
Our assumption that all the slopes of the geometric local monodromy of $\mathcal F_\pi$ at $\infty$ are bounded from above by $1$ means that the same holds also for $\mathcal F_i$.

It is shown in the proof of \cite[Corollary 3.14]{SS20} that the sheaf
\[
\mathcal F = \bigotimes_{i=1}^m e_i^* \mathcal F_i
\]
on $\mathbb A^n_{\overline{\F_q}}$ descends to $\F_q$, is mixed of nonpositive weights, and that its trace function satisfies
\[
\operatorname{tr}(\operatorname{Frob}_q, \mathcal F_{\bar f}) = t(f)
\]
so for the sum we need to obtain cancellation in we have
\[
\sum_{\substack{f \in \F_q[u] \\ \deg(f) < n}} t(f) = \sum_{f \in \mathbb A^n(\F_q)} \operatorname{tr}(\operatorname{Frob}_q, \mathcal F_{\bar f}).
\]
Specifically, for $q^{n-1} < X  \leq q^{n}$, the sum on the left hand side above equals the sum in \cref{MainRes}.

The Grothendieck--Lefschetz trace formula tells us that
\begin{equation*}
\sum_{f \in \mathbb A^n(\F_q)} \operatorname{tr}(\operatorname{Frob}_q, \mathcal F_{\bar f}) = 
\sum_{j=0}^{2n} (-1)^j \operatorname{tr}(\operatorname{Frob}_q,H_c^j(\mathbb A^n, \mathcal F))
\end{equation*}
where (the compactly supported) \'etale cohomology is taken over $\overline{\F_q}$.
Since $\mathcal F$ is mixed of nonpositive weights it follows from \cite{Weil2} that for each $0 \leq j \leq 2n$, every eigenvalue of $\operatorname{Frob}_q$ on $H_c^j(\mathbb A^n, \mathcal F)$ is of absolute value at most $q^{j/2}$. 
Applying the triangle inequality to the sum over $j$ we thus get that
\begin{equation*} \label{AfterDeligneEq}
\left| \sum_{j=0}^{2n} (-1)^j \operatorname{tr}(\operatorname{Frob}_q,H_c^j(\mathbb A^n, \mathcal F)) \right| \leq \sum_{j=0}^{2n} q^{j/2} \cdot \dim_{\overline{\mathbb Q_\ell}} H_c^j(\mathbb A^n, \mathcal F).
\end{equation*}
\cref{TechHeart} tells us that there can only be a contribution to the right hand side from $j \leq n+1$, so we arrive at an upper bound of
\[
q^{\frac{n+1}{2}} \sum_{j=0}^{2n}  \dim_{\overline{\mathbb Q_\ell}} H_c^j(\mathbb A^n, \mathcal F).
\]

\cite[Lemma 3.13]{SS20} tells us that the above sum of Betti numbers does not exceed the coefficient of $Z^n$ in the real polynomial
\[
  \prod_{i=1}^m( \rank(\mathcal F_i) (1+Z) + c_F(\mathcal F_i) Z)
\]
where $\mathbf{r}$ is the generic rank of a sheaf, and $c_F$ is the Fourier conductor of a sheaf introduced in \cite[Definition 3.8]{SS20}.
By the argument on \cite[page 67]{SS20} the above equals the coefficient of $Z^n$ in the polynomial
\[
\prod_{\pi \mid g} \left( r(t_{\pi} )(1+Z)  + c(t_{\pi}) Z \right)^{\deg (\pi)}
\]
where $\pi$ ranges over the monic irreducible factors of $g$.
That argument relies on \cite[Lemma 3.10 (6)]{SS20} which assumes that the sheaves $\{\mathcal{F}_i\}_{i=1}^m$ are tamely ramified at $\infty$, an assumption not necessarily satisfied in our case, but the proof of that lemma remains valid under our weaker assumption that the slopes at $\infty$ are at most $1$.
The bound in \cref{MainRes} can be deduced from the above using the argument on \cite[page 68]{SS20}, so the proof is complete.

\section{Proof of \cref{GeneralizedHooleyRationalFunction}}

In view of the Chinese Remainder Theorem, for every monic irreducible factor $\pi$ of $g$ there exists a (unique) nontrivial character $\chi_{\pi} \colon (\F_q[u]/(\pi))^\times \to \mathbb C^\times$ such that
\[
\chi = \prod_{\pi \mid g} \chi_\pi. 
\]
In particular for every $h \in \F_q[u]$ we have
\[
\chi(F(u,h)) = \prod_{\pi \mid g} \chi_\pi(F(u,h)).
\]
\cite[Lemma 2.2]{SS20} provides us with a tame, mixed of nonpositive weights, sheaf $\mathcal L_{\chi_\pi}(F)$ on $\mathbb A^1_{\F_q[u]/(\pi)}$ whose trace function at $h \in \F_q[u]$ is $\chi_{\pi}(F(u,h))$, and it has no finitely supported sections.
Moreover, its rank is $1$ and its conductor is bounded from above by the degree (in $T$) of the reduction of $F$ moduli $\pi$ in case this reduction is nonzero (and otherwise the conductor is $0$).
In particular $\chi(F(u,-))$ is a trace function.

By B\'ezout's Lemma, for every monic irreducible factor $\pi$ of $g$ there exists a polynomial $f_\pi \in \F_q[u]$ not divisible by $\pi$ such that
\[
\sum_{\pi \mid g} f_\pi \cdot \frac{g}{\pi} = 1.
\]
We therefore have
\[
\begin{split}
e\left( \frac{a(u,h) \overline{b(u,h)}}{g} \right) &= e\left( \frac{a(u,h) \overline{b(u,h)}}{g} \cdot \sum_{\pi \mid g} f_\pi \cdot \frac{g}{\pi}  \right) =  
e\left( \sum_{\pi \mid g} \frac{ f_\pi \cdot a(u,h) \overline{b(u,h)}}{\pi}   \right) \\ &= 
\prod_{\pi \mid g} e\left( \frac{ f_\pi \cdot a(u,h) \overline{b(u,h)}}{\pi}   \right). 
\end{split}
\]
\cite[Lemma 2.15]{SS20} provides us with a sheaf $\mathcal L_{\psi}(f_\pi a/b)$ over $\mathbb A^1_{\F_q[u]/(\pi)}$ mixed of nonpositive weights, having no finitely supported sections, and whose trace function is the factor corresponding to $\pi$ in the product above. 
Moreover, the rank of $\mathcal L_{\psi}(f_\pi a/b)$ is $1$, and our assumption on the degrees (in $T$) of the reductions of $a$ and $b$ modulo $\pi$ guarantess that the slope of $\mathcal L_{\psi}(f_\pi a/b)$ at $\infty$ is at most $1$, and that the conductor of $\mathcal L_{\psi}(f_\pi a/b)$ is at most $2 \deg_T b$.
In particular the function
\[
h \mapsto e\left( \frac{a(u,h) \overline{b(u,h)}}{g} \right)
\]
is a trace function.

\cite[Proposition 2.11, Lemma 2.13 and its proof]{SS20} tell us that $\mathcal L_{\chi_\pi}(F) \otimes \mathcal L_{\psi}(f_\pi a/b)$ is a sheaf mixed of nonpositive weights, has no finitely supported sections, has rank $1$, its slope at $\infty$ is at most $1$, its trace function is
\[
\chi_\pi(F(u,h)) \cdot e\left( \frac{ f_\pi \cdot a(u,h) \overline{b(u,h)}}{\pi}   \right)
\]
and its conductor is at most $\max\{\deg_T F + 2 \deg_T b,  2 \deg_T b\}$.
In particular the function of $h$ summed in \cref{GeneralizedHooleyRationalFunction} is a trace function.

To complete the proof of \cref{GeneralizedHooleyRationalFunction} we invoke \cref{MainRes}.
This requires checking that the sheaves $\mathcal L_{\chi_\pi}(F) \otimes \mathcal L_{\psi}(f_\pi a/b)$ have no geometric Artin--Schreier or trivial factors. We check this first in case the degree (in $T$) of the reduction of $F$ modulo $\pi$ is positive.
Indeed in this case the sheaf $\mathcal L_{\chi_\pi}(F)$ has a singularity at any zero of $F$, so the same is true for the sheaf $\mathcal L_{\chi_\pi}(F) \otimes \mathcal L_{\psi}(f_\pi a/b)$, but Artin--Schreier sheaves and constant sheaves are lisse on the affine line, so since all the sheaves in question have rank $1$, the treatment of this case is complete.

Suppose now that the reduction modulo $\pi$ of $F$ is constant, so that our sheaf is $\mathcal L_{\psi}(f_\pi a/b)$.
Our assumptions then imply that the rational function $a/b$ is not a polynomial, hence our sheaf has a singularity at some zero of $b$. As in the previous case the required conclusion follows from the fact that Artin--Schreier sheaves and constant sheaves are lisse on $\mathbb A^1$.
The invocation of \cref{MainRes} is thus justified so the proof is complete.

\section{Proof of \cref{MordellCor}}

It follows from the proof of \cite[Proposition 6.7]{Fouvry--Kowalski--Michel14}, 
that there exists a subset $S \subseteq \F_q[u]/(\pi)$ of at most $d-1$ elements, a positive integer $m \leq d!$, 
irreducible tame geometrically nontrivial middle-extension sheaves $\{ \mathcal{F}_i \}_{i=1}^{m}$ on $\mathbb{A}^1_{\F_q[u]/(\pi)}$ with finite monodromy which are punctually pure of weight $0$,
and algebraic numbers $\{c_i\}_{i=1}^{m}$, such that
\begin{equation*} \label{IndicatorDecompositionModPrime}
\mathbf{1}_{\mathcal{P}}(x) = \frac{|\mathcal{P}|}{|\pi|} + \sum_{i=1}^{m} c_i t_{i}(x) + O \left( d!^3 |\pi|^{-1/2} \right), \quad x \in \F_q[u]/(\pi) \setminus S,
\end{equation*}
namely the characteristic function of the set $\mathcal{P}$ of values of $P$ modulo $\pi$ is, away from $S$, a linear combination of the trace functions of the sheaves $\{\mathcal{F}_i\}_{i=1}^{m}$, up to $d!^3|\pi|^{-1/2}$ times an absolute constant. 
Moreover
\[
|c_1|, \dots, |c_{m}| \leq d!, \quad r(t_1), \dots, r(t_{m}) \leq d!, \quad c(t_1), \dots, c(t_{m}) \leq d!.
\]

We check that \cref{MainRes} applies to our sheaves $\mathcal F_1, \dots, \mathcal F_m$.
For this, we first recall that a sheaf punctually pure of weight $0$ is mixed of nonpositive weights.
Second, we recall that middle-extension sheaves have no finitely supported sections.
Third, we recall that tameness of sheaves, means that the slopes at $\infty$ (or any other point in $\mathbb{P}^1$) are $0$.
In particular, our sheaves have no Artin--Schreier quotients. 
It remains to explain why our sheaves do not have the constant rank $1$ sheaf as a geometric quotient.

Suppose toward a contradiction that $\overline{\mathbb{Q}_\ell}$ is geometrically a quotient of $\mathcal{F}_i$ for some $1 \leq i \leq m$. 
Since $\mathcal{F}_i$ is irreducible with finite monodromy, Clifford theory tells us that $\mathcal{F}_i$ is geometrically a direct sum of (say) $n$ constant sheaves of rank $1$.
We conclude that the arithmetic monodromy of $\mathcal{F}_i$ is abelian, so $n=1$ because of its irreducibility. We arrive at a contradiction to the geometric nontriviality of $\mathcal{F}_i$. 

We can therefore invoke \cref{MainRes} and get that
\begin{equation*}
\begin{split}
\# \{f \in \F_q[u] : |f| < X, \ f \in \mathcal{P} \} &= \sum_{\substack{f \in \F_q[u] \\ |f| < X}} \mathbf{1}_{\mathcal{P}}(f) =
\frac{|\mathcal{P}|}{|\pi|}X + \sum_{i=1}^m c_i \sum_{\substack{f \in \F_q[u] \\ |f| < X}} t_i(f) + O\left( d!^4 + \frac{d!^4 X}{|\pi|^{1/2}} \right) \\
&= \frac{|\mathcal{P}|}{|\pi|}X + O \left( X^{1/2} |\pi|^{\log_q(3 \cdot d!)} + d!^4 + \frac{d!^4 X}{|\pi|^{1/2}} \right)
\end{split}
\end{equation*}
where the first error term dominates the second error term, so the result follows.

\section{Covariance of Short Trace Sums}

Our goal here will be to establish a generalized form of \cref{VarianceShortTraceSums}. 
To set it up, we denote the degree of $P$ by $m$, and let $\mathcal{F}, \mathcal{G}$ be sheaves on $\mathbb{A}^1_{\F_q[u]/(P)}$ satisfying the assumptions of \cref{VarianceShortTraceSums}. For $h \in \F_q[u]/(P)$ set
\[
A_h \colon \mathbb{A}^1_{\F_q[u]/(P)} \to \mathbb{A}^1_{\F_q[u]/(P)}, \quad A_h(x) = x-h.
\]

For a geometric generic point $\eta$ of $\mathbb{A}^1_{\F_q[u]/(P)}$ we assume that $\mathcal{F}_\eta$ and $\mathcal{G}_\eta$ are irreducible representations of $\Gal(\overline{\F_q[u]/(P)}(X)^{\text{sep}}/\overline{\F_q[u]/(P)}(X))$, and that for every $h \neq 0$ the representations $\mathcal{F}_\eta$ and $(A_h^* \mathcal{G})_\eta$ are not isomorphic.
We denote the trace functions of $\mathcal{F}$ and $\mathcal{G}$ by $t_1$ and $t_2$ respectively.

For $k \leq m$, we will estimate the covariance of $t_1$ and $t_2$ by
\[
q^{-m} \sum_{f \in \F_q[u]/(P)} \left( q^{-k/2} \sum_{\substack{g \in \F_q[u] \\ \deg(g) < k}} t_1(f+g) \right) 
\overline{ \left( q^{-k/2} \sum_{\substack{h \in \F_q[u] \\ \deg(h) < k}} t_2(f+h) \right) }
= \mathbf{1}_{\mathcal{F}_\eta \cong \mathcal{G}_\eta} + O\left(q^\frac{k-m}{2}|P|^{\log_q(\gamma)} \right) 
\]
where the implied constant depends only on $q$, and
\[
\gamma = 3(r(t_1) + c(t_1))(r(t_2) + c(t_2)).
\]
In case $\mathcal{F} = \mathcal{G}$, we have $t_1 = t_2$ so we obtain \cref{VarianceShortTraceSums} with $X = q^k$.

Expanding the brackets above we get
\[
q^{-m} \sum_{f \in \F_q[u]/(P)} q^{-k} \sum_{\substack{g,h \in \F_q[u] \\ \deg(g), \deg(h) < k}} t_1(f+g) \overline{t_2(f+h)}
\]
and a change of variables (and of the order of summation) brings us to
\begin{equation} \label{InnerSumIsTraceFunctionOfConvolution}
\sum_{\substack{h \in \F_q[u] \\ \deg(h) < k}} q^{-m} \sum_{f \in \F_q[u]/(P)} t_1(f) \overline{t_2(f-h)}.
\end{equation}

Put
\[
s \colon \mathbb{A}^2 \to \mathbb{A}^1, \quad s(f,h) = f+h, \qquad N \colon \mathbb{A}^1 \to \mathbb{A}^1, \quad N(x) = -x
\]
and consider the additive convolution
\[
\mathcal{F} * N^*\mathcal{G}^\vee = Rs_!(\mathcal{F} \boxtimes N^* \mathcal{G}^\vee)
\]
whose trace function at $h \in \F_q[u]/(P)$ is, by the Grothendieck--Lefschetz trace formula, and our assumption that $\mathcal F$ and $\mathcal G$ are punctually pure of weight $0$, the inner sum in \cref{InnerSumIsTraceFunctionOfConvolution}.

By the proper base change theorem, and \cite[2.0.6]{Katz88} for $h \in \F_q[u]/(P)$ we have
\[
(R^2s_!(\mathcal{F} \boxtimes N^* \mathcal{G}^\vee))_h = H^2_c(\mathbb{A}^1, \mathcal{F} \otimes A^*_{h} \mathcal{G}^\vee) = 
\begin{cases}
0 &h \neq 0 \\ 
0 &h = 0, \ \mathcal{F}_\eta \ncong \mathcal{G}_\eta \\
\overline{\mathbb{Q}_\ell} &h=0, \ \mathcal{F}_\eta \cong \mathcal{G}_\eta
\end{cases}
\]
since the representations are irreducible by assumption.
Frobenius acts by multiplication by $q^m$ so the degree $2$ contribution to the trace function of $\mathcal{F} * N^*\mathcal{G}^\vee$ gives the required main term. 
Similarly
\begin{equation} \label{NoContributionTraceConvolutionDegreeZero}
(R^0s_!(\mathcal{F} \boxtimes N^* \mathcal{G}^\vee))_h  = H^0_c(\mathbb{A}^1, \mathcal{F} \otimes A_h^*\mathcal{G}^\vee) = 0
\end{equation}
since the sheaves $\mathcal{F}$ and $A_h^* \mathcal{G}^\vee$ have no finitely supported sections, 
so there is no degree $0$ contribution to the trace function of $\mathcal{F} * N^*\mathcal{G}^\vee$.

In order to study the degree $1$ contribution we define the (twisted) sheaf
\[
\mathcal{H} = R^1s_!(\mathcal{F} \boxtimes N^* \mathcal{G}^\vee)(1/2)
\]
and denote its trace function by $t$, so that
\[
\sum_{\substack{h \in \F_q[u] \\ \deg(h) < k}} q^{-m} \sum_{f \in \F_q[u]/(P)} t_1(f) \overline{t_2(f+h)} = 
\mathbf{1}_{\mathcal{F}_\eta \cong \mathcal{G}_\eta} - q^{-m/2} \sum_{\substack{h \in \F_q[u] \\ \deg(h) < k}} t(h).
\]
Our task is therefore to bound the sum on the right hand side above.
We will do so by invoking \cref{MainRes}. 
For that matter, we need to check that $\mathcal{H}$ satisfies the required properties, and calculate its rank and conductor.
First we note that, in view of Deligne's theorem on weights, 
our choice of twist is such that $\mathcal{H}$ is mixed of nonpositive weights since $\mathcal{F}$ and $\mathcal{G}$ are punctually pure of weight $0$.

We show that $\mathcal{H}$ has no finitely supported sections. Our task is to prove
\[
H^0_c(\mathbb{A}^1, R^1s_!(\mathcal{F} \boxtimes N^* \mathcal{G}^\vee)) = 0.
\]
The Leray spectral sequence for $s$ is 
\[
H^i_c(\mathbb{A}^1, R^js_!(\mathcal{F} \boxtimes N^* \mathcal{G}^\vee)) \Rightarrow 
H^{i+j}_c(\mathbb{A}^2, \mathcal{F} \boxtimes N^* \mathcal{G}^\vee)
\]
and it gives rise to the exact sequence
\[
H^{1}_c(\mathbb{A}^2, \mathcal{F} \boxtimes N^* \mathcal{G}^\vee) \to 
H^0_c(\mathbb{A}^1, R^1s_!(\mathcal{F} \boxtimes N^* \mathcal{G}^\vee)) \to 
H^2_c(\mathbb{A}^1, R^0s_!(\mathcal{F} \boxtimes N^* \mathcal{G}^\vee))
\]
so in order to show that the middle term vanishes, it suffices to show that the others do.
For the last term, it is enough to check that $R^0s_!(\mathcal{F} \boxtimes N^* \mathcal{G}^\vee)$ is the zero sheaf, 
namely that all its stalks vanish, which we have seen in \cref{NoContributionTraceConvolutionDegreeZero}.  

For the first term in the exact sequence above, the K\"unneth formula gives
\begin{equation*}
\begin{split}
H^{1}_c(\mathbb{A}^2, \mathcal{F} \boxtimes N^*\mathcal{G}^\vee) = ( H^1_c(\mathbb{A}^1, \mathcal{F}) \otimes H^0_c(\mathbb{A}^1, N^* \mathcal{G}^\vee) ) \oplus  ( H^0_c(\mathbb{A}^1, \mathcal{F}) \otimes H^1_c(\mathbb{A}^1, N^* \mathcal{G}^\vee) ) = 0
\end{split}
\end{equation*}
since $\mathcal{F}$ and $\mathcal{G}^\vee$ have no finitely supported sections by assumption. 
This concludes the proof that $\mathcal{H}$ has no finitely supported sections.

Using a similar argument, we show that $\mathcal{H}_\eta$ has neither trivial nor Artin--Schreier quotients.
Our task here is to show that
\[
H^2_c(\mathbb{A}^1,R^1s_!(\mathcal{F} \boxtimes N^* \mathcal{G}^\vee) \otimes \mathcal{L}_\psi(ax)) = 0, \quad a \in \overline{\F_q[u]/(P)}.
\]
By the projection formula, this amounts to showing that
\[
H^2_c(\mathbb{A}^1,R^1s_!(\mathcal{F} \boxtimes N^* \mathcal{G}^\vee \otimes \mathcal{L}_\psi(a(f+h)))) = 0.
\]
In view of the Leray spectral sequence for $s$, it is enough to show that
\[
H^3_c(\mathbb{A}^2, \mathcal{F} \boxtimes N^* \mathcal{G}^\vee \otimes \mathcal{L}_\psi(a(f+h))) = 0.
\]
Since $\mathcal{L}_\psi(a(f+h)) = \mathcal{L}_\psi(af) \boxtimes \mathcal{L}_\psi(ah)$, the K\"unneth formula gives
\begin{equation*}
\begin{split}
&H^3_c(\mathbb{A}^2, \mathcal{F} \boxtimes N^* \mathcal{G}^\vee \otimes \mathcal{L}_\psi(a(f+h))) = 
H^3_c(\mathbb{A}^2, (\mathcal{F} \otimes \mathcal{L}_\psi(af)) \boxtimes (N^* \mathcal{G}^\vee \otimes \mathcal{L}_\psi(ah))) = \\
&(H_c^1( \mathbb{A}^1, \mathcal{F} \otimes \mathcal{L}_\psi(af) ) \otimes H^2_c(\mathbb{A}^1, N^* \mathcal{G}^\vee \otimes \mathcal{L}_\psi(ah)) ) \oplus 
(H_c^2(\mathbb{A}^1, \mathcal{F} \otimes \mathcal{L}_\psi(af)) \otimes H^1_c(\mathbb{A}^1, N^* \mathcal{G}^\vee \otimes \mathcal{L}_\psi(ah) ) )
\end{split}
\end{equation*}
and this direct sum vanishes since the cohomology groups in degree $2$ vanish in view of our assumptions on $\mathcal{F}$ and on $\mathcal{G}$.

We explain why $\mathcal{H}$ has all slopes at $\infty$ bounded by $1$. 
By \cite[Proposition 1.2.2.7]{Lau}, the Fourier transform of $\mathcal{F} * N^*\mathcal{G}^\vee$ is the tensor product of the Fourier transform of $\mathcal{F}$ and the Fourier transform of $\mathcal{G}$.
Since the Fourier transform is involutive, and the property of having all slopes at $\infty$ bounded by $1$ is preserved by the Fourier transform in view of \cite[Proposition 2.4.3 (i)(b), (iii)(b)]{Lau}, and by tensor products in view of \cite[Proposition 2.11, Proof of Corollary 2.12]{SS20}, our claim follows from the corresponding assumption on $\mathcal{F}$ and $\mathcal{G}$.

For the rank of $\mathcal H$, by \cite[Lemma 3.11]{SS20} we have
\[
r(t) \leq \sup_{h} \dim H^1_c(\mathbb{A}^1, \mathcal{F} \otimes A_h^*\mathcal{G}^\vee) \leq c(t_1)r(t_2) + r(t_1)c(t_2) + r(t_1)r(t_2).
\]
For the conductor, since all slopes at $\infty$ are at most $1$, by \cite[Lemma 3.10]{SS20} and \cite[Proposition 2.7.2.1]{Lau} we have
\[
c(t) = \sum_{x \in |\mathbb{A}^1|} \cond_x(\mathcal{H}) \leq \sum_{x \in |\mathbb{A}^1|} \cond_x(\mathcal{F} * N^*\mathcal{G}^\vee) =
\sum_{x \in |\mathbb{A}^1|} \sum_{\substack{y,z \in |\mathbb{A}^1| \\ y+z = x}} \cond_y(\mathcal{F}) \cond_z(N^*\mathcal{G}^\vee)
= c(t_1)c(t_2).
\]
We can therefore apply \cref{MainRes} and get the required bound.

\section{Acknowledgments}

Will Sawin's research was supported by the Clay Mathematics Institute, the National Science Foundation grant DMS-2502029, and a Sloan Research Fellowship. 

Mark Shusterman's research is co-funded by the European Union (ERC, Function Fields, 101161909). Views and opinions expressed are however those of the authors only and do not necessarily reflect those of the European Union or the European Research Council. Neither the European Union nor the granting authority can be held responsible for them.

\end{document}